\numberwithin{equation}{section}
\newtheorem{thm}[equation]{Theorem}
\newtheorem{lem}[equation]{Lemma}
\newtheorem{prp}[equation]{Proposition}
\theoremstyle{definition}
\newtheorem{df}[equation]{Definition}
\newtheorem{exa}[equation]{Example}
\theoremstyle{remark}
\newtheorem*{rem}{Remark}
\DeclareMathOperator{\im}{im}
\DeclareMathOperator{\supp}{supp}
\DeclareMathOperator{\hocolim}{hocolim}
\DeclareMathOperator{\Tr}{Tr}
\DeclareMathOperator{\sgn}{sgn}
\DeclareMathOperator{\Ch}{Ch}
\DeclareMathOperator{\Seq}{Seq}
\def\hTop{\mathbf{hTop}}
\def\Set{\mathbf{Set}}
\def\vP{\vec{P}}
\def\R{\mathbb{R}}
\def\Z{\mathbb{Z}}
\def\Set{\mathbf{Set}}
\def\cN{\mathcal{N}}
\def\bn{\mathbf{n}}
\def\bc{\mathbf{c}}
\def\fS{\mathfrak{S}}
\def\fD{\mathfrak{D}}
\def\cN{\mathcal{N}}
\def\sfp{\mathsf{p}}
\def\sfd{\mathsf{d}}
\def\sfs{\mathsf{s}}
\def\sft{\mathsf{t}}
\def\sfu{\mathsf{u}}
\def\pcSqSet{\mathbf{pc}\square\Set}
\title{Spaces of directed paths on pre-cubical sets}
\author{Krzysztof Ziemia\'nski}
\thanks{Faculty of Mathematics, Informatics and Mechanics, University of Warsaw, Banacha 2, 02--097 Warszawa, Poland. E-mail: ziemians@mimuw.edu.pl.}
\begin{document}

\begin{abstract}
	The spaces of directed paths on the geometric realizations of pre-cubical sets, called also $\square$--sets, can be interpreted as the spaces of possible executions of Higher Dimensional Automata, which are models for concurrent computations. In this paper we construct, for a sufficiently good pre-cubical set $K$, a CW-complex $W(K)_v^w$ that is homotopy equivalent to the space of directed paths between given vertices $v$, $w$ of $K$. This construction is functorial with respect to $K$, and minimal among all functorial constructions. Furthermore, explicit formulas for incidence numbers of the cells of $W(K)_v^w$ are provided.
\end{abstract}

\maketitle

\section{Introduction}

In recent years, much effort was made to understand spaces of directed paths on d-spaces. Particularly interesting examples of d-spaces are geometric realizations of pre-cubical sets \cite{FGR}, thanks their to applications in concurrency --- their directed path spaces can be interpreted as the executions spaces of Higher Dimensional Automata \cite{Pr}. Raussen \cite{R1} proved that such spaces, under certain conditions, have the homotopy types of CW-complexes. In this paper we construct a combinatorial model of the space of directed paths between fixed two vertices of a pre-cubical set, assuming it satisfies a certain mild condition, i.e., its non-looping length covering of proper (cf.\ \ref{e:Proper}, \ref{e:NonLoopingLengthCovering}).


\emph{A pre-cubical set} $K$, called also a $\square$--set, is a sequence of disjoints sets $K[n]$, for $n\geq 0$, equipped with \emph{face maps} $d^\varepsilon_i:K[n]\to K[n-1]$, for $\varepsilon\in\{0,1\}$ and $i\in\{1,\dots,n\}$, that satisfy pre-cubical relations; namely, $d^\varepsilon_i d^\eta_j=d^{\eta}_{j-1}d^\varepsilon_i$ for $i<j$.
\emph{A $\square$-map $f:K\to K'$} between $\square$-sets is a sequence of maps $f[n]:K[n]\to K[n']$ that commute with the face maps. 
Elements of $K[n]$ will be called \emph{$n$-cubes} of $K$; in particular $0$-cubes will be called \emph{vertices}. A  \emph{bi-pointed} $\square$--set is a triple $(K,v,w)$, where $K$ is a $\square$--set, and $v,w\in K[0]$ are its vertices. Let $\square\Set$ and $\square\Set_*^*$ denote the category of $\square$--sets and $\square$--maps and the category of bi-pointed $\square$--sets and $\square$--maps preserving the distinguished vertices, respectively.

Let us introduce a notation for arbitrary compositions of face maps. For a function $f:\{1,\dots,n\}\to \{0,1,*\}$ such that $|f^{-1}(*)|=m$, define a map $d_f:K[n]\to K[m]$ by
\begin{equation}
	d_f=d_1^{f(1)}d_2^{f(2)}\dots d_{n}^{f(n)},
\end{equation}
where $d_i^*$ is, by convention, the identity map; we will also write $d_{f^{-1}(0),f^{-1}(1)}$ for $d_f$. Finally, let $d^0_A:=d_{A,\emptyset}$, $d^1_A=d_{\emptyset,A}$ and $d^\varepsilon=d^\varepsilon_{\{1,\dots,n\}}:K[n]\to K[0]$. If $f:\{1,\dots,n\}\to\{0,1,*\}$, $g:\{1,\dots,m\}\to\{0,1,*\}$ are functions such that $|f^{-1}(*)|=m$, $|g^{-1}(*)|=k$, then $d_gd_f=d_h$, where
\begin{equation}\label{e:DfComposition}
	h:\{1,\dots,n\}\ni i \mapsto\begin{cases}
		f(i) & \text{for $i\not\in f^{-1}(*)$}\\
		g(\alpha(i))& \text{for $i\in f^{-1}(*)$}
	\end{cases}
	\in\{0,1,*\},
\end{equation}
and $\alpha:f^{-1}(*)\to\{1,\dots,m\}$ as the unique increasing bijection.


The results of this paper apply to finite $\square$--sets that have proper non-looping length coverings. We say that a $\square$-set $K$ is \emph{proper}, if the map
\begin{equation}\label{e:Proper}
	\coprod_{n\geq 0} K[n]:c\mapsto \{d^0(c),d^1(c)\}\in 2^{K[0]}
\end{equation}
is an injection, i.e., the cubes of $K$ can be distinguished by its extreme vertices. Every proper $\square$-set is non-self-linked in the sense of \cite{FGR}, i.e., for every $c\in K[n]$ and $f,g:\{1,\dots,n\}\to\{0,1,*\}$, the equity $d_f(c)=d_g(c)$ implies that $f=g$. \emph{The non-looping length covering} \cite[5.1]{R3} of a $\square$--set $K$ is a $\square$--set $\tilde{K}$ such that $\tilde{K}[n] = K[n]\times \Z$ and
\begin{equation}\label{e:NonLoopingLengthCovering}
	d^\varepsilon_i(c,k)=(d^\varepsilon_i(c),k+\varepsilon).
\end{equation}
Clearly, if a $\square$--set is proper, then its non-looping length covering is also proper. Let $\pcSqSet_*^*\subseteq \square\Set_*^*$ be the full subcategory of finite bi-pointed $\square$--sets having the proper non-looping covering.

\begin{df}
	Let $K$ be a $\square$-set and let $v,w\in K[0]$ be its vertices. \emph{A cube chain in $K$ from $v$ to $w$} is a sequence of cubes $\mathbf{c}=(c_1,\dots,c_l)$, where $c_k\in K[n_k]$ and $n_k>0$, such that
	\begin{itemize}
		\item{$d^0(c_1)=v$,}
		\item{$d^1(c_l)=w$,}
		\item{$d^1(c_i)=d^0(c_{i+1})$ for $i=1,\dots,l-1$.}
	\end{itemize}
	The sequence $(n_1,\dots,n_l)$ will be called \emph{the type} of a cube chain $\mathbf{c}$, $\dim(\mathbf{c})=n_1+\dots+n_l-l$ \emph{the dimension} of $\mathbf{c}$, and $n_1+\dots+n_l$ \emph{the length} of $\mathbf{c}$. The set of all cube chains in $K$ from $v$ to $w$ will be denoted by $\Ch(K)_v^w$, and the set of cube chains of dimension equal to $m$ (resp.\ less than $m$, less or equal to $m$) by $\Ch^{=m}(K)_v^w$ (resp.\ $\Ch^{<m}(K)_v^w$, $\Ch^{\leq m}(K)_v^w$).
\end{df}

For a cube chain $\bc=(c_1,\dots,c_l)\in \Ch(K)_v^w$ of type $(n_1,\dots,n_l)$, an integer $k\in \{1,\dots,l\}$ and a subset $A\subsetneq \{1,\dots, n_k\}$ having $r$ elements, where $0<r<n_k$, define a cube chain 
\begin{equation}\label{e:FaceCubeChain}
	d_{k,A}(\bc) = (c_1,\dots,c_{k-1},d^0_{\bar{A}}(c_k), d^1_{A}(c_k),c_{k+1},\dots,c_l)\in \Ch(K)_v^w,
\end{equation}
where $\bar{A}=\{1,\dots,n_k\}\setminus A$. Let $\leq$ be the partial order on $\Ch(K)$ spanned by all relations having the form $d_{k,A}(\bc)\leq \bc$ (the relation $\leq$ is antisymmetric since $d_{k,A}(\bc)$ has more cubes than $\bc$).

For a $\square$-set $K$ let $|K|$ denote its geometric realization (\ref{e:GeometricRealization}) and, for a bi-pointed $\square$--set $(K,v,w)$, let $\vec{P}(K)_v^w$ be the space of directed paths on $|K|$ from $v$ to $w$ (cf.\ \ref{d:dSpace}). Clearly, $(K,v,w)\mapsto \Ch(K)_v^w$ and $(K,v,w)\mapsto \vP(K)_v^w$ are functors from $\square\Set_*^*$ into the categories of posets and topological spaces, respectively.

Let $\hTop$ be the homotopy category of the category of topological spaces. We prove the following

\begin{thm}\label{t:Cover}
	The functors
\[
	\vP:\pcSqSet_*^*\ni (K,v,w)\mapsto \vP(K)_v^w\in \hTop
\]	
and
\[
	|\Ch|:\pcSqSet_*^*\ni (K,v,w)\mapsto |\Ch(K)_v^w|\in \hTop
\]
are naturally equivalent. In other words, for every finite bi-pointed $\square$--set $(K,v,w)$ having the proper non-looping covering, there exists a homotopy equivalence $\varepsilon_{(K,v,w)}:\vec{P}(K)_v^w\to |\Ch(K)_v^w|$ such that, for every $\square$--map $f:K\to K'$, the diagram
	\[
		\begin{diagram}
			\node{\vec{P}(K)_v^w}
				\arrow[3]{e,t}{\alpha\mapsto |f|\circ\alpha}
				\arrow{s,l}{\varepsilon_{(K,v,w})}
			\node{}
			\node{}
			\node{\vec{P}(K')_{f(v)}^{f(w)}}
				\arrow{s,r}{\varepsilon_{(K',f(v),f(w))}}
		\\
			\node{|\Ch(K)_v^w|}
				\arrow[3]{e,t}{Ch(f):\bc\mapsto f(\bc)}
			\node{}
			\node{}
			\node{|\Ch(K')_{f(v)}^{f(w)}|}
		\end{diagram}
	\]
	commutes up to homotopy.	
\end{thm}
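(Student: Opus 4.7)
The plan is to identify $\vec{P}(K)_v^w$ with $|\Ch(K)_v^w|$ by exhibiting a functorial good cover of the path space indexed by cube chains, and then invoking a nerve-style theorem. Thanks to the functorial bijection $\vec{P}(\tilde K)_{\tilde v}^{\tilde w}\cong\vec{P}(K)_v^w$ (for appropriate lifts $\tilde v$, $\tilde w$) and the analogous bijection on $\Ch$, I can reduce at once to the case that $K$ itself is proper; properness then forces every point of $|K|$ into the interior of a unique cube and prevents a directed path from backtracking or revisiting a cube.

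For each $\bc=(c_1,\dots,c_l)\in\Ch(K)_v^w$ of type $(n_1,\dots,n_l)$, I would define
\[
	\vec{P}(K)_\bc \;\subseteq\; \vec{P}(K)_v^w
\]
as the subspace of directed paths $\alpha$ admitting $0=t_0\le t_1\le\dots\le t_l=1$ with $\alpha([t_{k-1},t_k])$ a directed path in the closed cube $|c_k|$ from $d^0(c_k)$ to $d^1(c_k)$. The space of directed paths in $[0,1]^{n_k}$ from $\vec 0$ to $\vec 1$ is convex, and the simplex of time distributions is convex too, so a fibration argument (or an explicit deformation retraction onto the linear path traversing each cube diagonally) shows that $\vec{P}(K)_\bc$ is contractible. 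Properness of $K$ together with the non-looping covering hypothesis ensures that every $\alpha\in\vec{P}(K)_v^w$ has a canonical carrier sequence of cubes, which extends uniquely to a minimal cube chain $\bc(\alpha)$ with $\alpha\in\vec{P}(K)_{\bc(\alpha)}$, so $\{\vec{P}(K)_\bc\}_\bc$ is a genuine cover of $\vec{P}(K)_v^w$.

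A direct inspection of the face operations $d_{k,A}$ then establishes the key incidence formula
\[
	\vec{P}(K)_{\bc'}\subseteq\vec{P}(K)_\bc \quad\Longleftrightarrow\quad \bc'\le\bc,
\]
and shows that every non-empty finite intersection $\bigcap_j \vec{P}(K)_{\bc_j}$ coincides with $\vec{P}(K)_{\bigwedge_j\bc_j}$ for the meet of the $\bc_j$ in $\Ch(K)_v^w$ (which exists precisely when the intersection is non-empty), hence is itself contractible. Applying a nerve theorem to this contractible cover with poset-indexed intersections produces a natural chain of weak equivalences
\[
	\vec{P}(K)_v^w \;\xleftarrow{\;\simeq\;}\; \hocolim_{\bc\in\Ch(K)_v^w}\vec{P}(K)_\bc \;\xrightarrow{\;\simeq\;}\; \hocolim_{\bc\in\Ch(K)_v^w} * \;=\; |\Ch(K)_v^w|,
\]
and naturality with respect to $\square$-maps $f:K\to K'$ is immediate, since $f$ sends each $\vec{P}(K)_\bc$ into $\vec{P}(K')_{f(\bc)}$.

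The hardest part will be the middle step: rigorously verifying that every $\alpha$ has a canonical minimal cube chain $\bc(\alpha)$ and that the intersection formula reproduces the combinatorial order $\le$. Both crucially need the non-looping covering, which rules out the anomalous directed paths that loop or traverse a cube several times; without it, $\bc(\alpha)$ need not be well-defined and the nerve of the cover would drift away from $|\Ch(K)_v^w|$. A secondary technicality is to set up the cover in a form to which a Segal-type nerve theorem applies (possibly by thickening each $\vec{P}(K)_\bc$ to an open neighbourhood or by working with the Vietoris-style homotopy colimit), but once the contractibility and intersection pattern are in hand this should be routine.
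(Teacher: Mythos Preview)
Your proposal contains a genuine gap: the family $\{\vec P(K)_{\bc}\}_{\bc\in\Ch(K)_v^w}$ does \emph{not} cover $\vec P(K)_v^w$, even when $K$ is proper and carries a height function. Take two $2$--cubes $a,b$ glued along an edge $e=d^1_1(a)=d^0_1(b)$, set $v=d^0(a)$, $w=d^1(b)$, and consider the directed path that runs from $v$ to the midpoint of $|e|$ inside $|a|$ and then from that midpoint to $w$ inside $|b|$. This path meets no vertex of $K$ other than $v$ and $w$; in particular it never equals $d^1(a)$ or $d^0(b)$, so it admits no decomposition of the form you require and lies in no $\vec P(K)_{\bc}$. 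The non-looping length covering is irrelevant here: the example already has a height function and no loops. Your ``canonical carrier sequence'' of supports exists, but it is $(a,e,b)$, which is not a cube chain because $d^1(a)\neq d^0(e)$ and $d^1(e)\neq d^0(b)$.

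The paper closes exactly this gap by inserting an additional step before the cover argument. It first passes to \emph{tame} paths (those that cross from one cube to another only at vertices) and proves, via a d-simplicial triangulation of $|K|$ together with a non-trivial taming retraction imported from earlier work, that the inclusion $\vec P_t(K)_v^w\hookrightarrow \vec P(K)_v^w$ is a homotopy equivalence (Theorem~\ref{t:Taming}). Only on the space of \emph{natural tame} paths $\vec N_t(K)_v^w$ does the cube-chain cover become a genuine closed cover with the contractibility and intersection pattern you describe (Proposition~\ref{p:CoverProperties}); from there your hocolim/nerve argument is precisely what the paper does. So the architecture of your proof is right, but the step you flag as ``the hardest part'' is in fact false as stated, and the missing ingredient is the taming homotopy equivalence.
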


Next, the prove that the spaces $\Ch(K)_v^w$ have a natural CW-structure, which is coarser than the simplicial one. For a poset $P$ and $x\in P$, let $P_{\leq x}$ (resp.\ $P_{<x}$) be the subposet of $P$ containing all elements that are less or equal (resp.\ equal) to $x$.

\begin{thm}\label{t:CW}
	Let $(K,v,w)$ be a finite bi-pointed $\square$--set having the proper non-looping covering (i.e., $(K,v,w)\in\pcSqSet$). The space $|\Ch(K)_v^w|$ is a regular CW-complex with $d$--dimensional cells having the form $|\Ch_{\leq \bc}(K)|$ for $\bc\in \Ch^{=d}(K)_v^w$. Furthermore, for every $\square$--map $f:K\to K'$, the induced map $f_*:|\Ch(K)_v^w|\to|\Ch(K')_{f(v)}^{f(w)}|$ is cellular.
\end{thm}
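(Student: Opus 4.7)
The plan is to identify, for each cube chain $\bc\in\Ch(K)_v^w$, the closed lower interval $\Ch_{\leq \bc}(K)_v^w$ with the face poset of a product of permutohedra, and then to invoke the standard criterion (due to Bj\"orner) for when a poset is the face poset of a regular CW-complex with cells indexed by its elements. I would begin by reducing to the case of a proper $K$: since every face operation $d_{k,A}$ preserves the total length $n_1+\cdots+n_l$ of a cube chain, every element of $\Ch_{\leq \bc}(K)_v^w$ has the same length as $\bc$. Lifting $\bc$ to the proper non-looping covering $\tilde K$ starting from $(v,0)$ yields a unique $\tilde\bc$, and the obvious order-preserving bijection gives an isomorphism $\Ch_{\leq \bc}(K)_v^w \cong \Ch_{\leq \tilde\bc}(\tilde K)_{(v,0)}^{(w,m)}$ where $m$ is the common length. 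So when analyzing lower intervals one may assume $K$ itself is proper.

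The key step is then to show that for $\bc=(c_1,\dots,c_l)$ of type $(n_1,\dots,n_l)$,
\[
\Ch_{\leq \bc}(K)_v^w \;\cong\; \prod_{k=1}^l \mathsf{OP}(n_k),
\]
where $\mathsf{OP}(n)$ denotes the poset of ordered set partitions of $\{1,\dots,n\}$ into nonempty blocks, with refinement as the order (finer $=$ smaller). To an ordered partition $(A_1,\dots,A_r)$ of $\{1,\dots,n_k\}$ one associates the cube chain $(d_{f_1}(c_k),\dots,d_{f_r}(c_k))$, where $f_j:\{1,\dots,n_k\}\to\{0,1,*\}$ is given by $f_j^{-1}(1)=\bigsqcup_{i<j}A_i$, $f_j^{-1}(0)=\bigsqcup_{i>j}A_i$, $f_j^{-1}(*)=A_j$; concatenating over $k=1,\dots,l$ produces an element of $\Ch_{\leq \bc}(K)_v^w$. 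Using the pre-cubical composition rule \eqref{e:DfComposition}, one checks that every iterated refinement of $\bc$ by the operations $d_{k,A}$ arises in this way, and properness of $K$ forces distinct tuples of ordered partitions to yield distinct chains. A direct inspection then matches refinement of partitions with the generating relation $d_{k,A}(\bc')\leq\bc'$.

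Since $\mathsf{OP}(n)$ is the face poset of the permutohedron of dimension $n-1$, the geometric realization $|\mathsf{OP}(n)|$ is a closed $(n-1)$-ball (its barycentric subdivision), so $|\prod_k \mathsf{OP}(n_k)|\cong \prod_k |\mathsf{OP}(n_k)|$ is a closed ball of dimension $\sum(n_k-1)=\dim(\bc)$, with boundary $|\Ch_{<\bc}|$ a sphere of dimension $\dim(\bc)-1$. Bj\"orner's characterization of face posets of regular CW-complexes then yields that $\Ch(K)_v^w$ is such a face poset with closed $d$-cells $|\Ch_{\leq \bc}|$ for $\bc\in\Ch^{=d}(K)_v^w$, as claimed. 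Functoriality is immediate: a $\square$-map $f:K\to K'$ maps $K[n]$ into $K'[n]$ and commutes with the face operations, so $f(\bc)$ has the same type and dimension as $\bc$, $\Ch(f)$ restricts to a map $\Ch_{\leq \bc}(K)\to\Ch_{\leq f(\bc)}(K')$ between cells of the same dimension, and $f_*$ is therefore cellular.

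The main obstacle is the explicit local identification $\Ch_{\leq \bc}(K)\cong \prod_k \mathsf{OP}(n_k)$: one has to unwind the pre-cubical composition rule carefully to show that arbitrary iterated refinements of $\bc$ correspond bijectively to tuples of ordered partitions of the $\{1,\dots,n_k\}$, and properness is precisely what prevents two distinct refinements from collapsing to the same cube chain.
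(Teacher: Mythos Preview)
Your proposal is correct and follows essentially the same strategy as the paper. The paper's poset $O_n$ of surjections $\{1,\dots,n\}\twoheadrightarrow\{1,\dots,k\}$ is precisely your $\mathsf{OP}(n)$ under the correspondence $f\mapsto(f^{-1}(1),\dots,f^{-1}(k))$, and the paper's isomorphism $I_\bc:O_\bn\cong O_{n_1}\times\dots\times O_{n_l}\to\Ch_{\leq\bc}(K)$ (Propositions~\ref{p:IcMap}, \ref{p:SubposetIso}) is your map from tuples of ordered partitions; both then invoke Bj\"orner's criterion and handle the reduction to the proper case via the length covering (the paper globally via Proposition~\ref{p:ChainCover}, you locally on lower intervals, which is equivalent since the face operations preserve length).
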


We also calculate the incidence numbers between the cells of this CW-complex. 
In Section \ref{s:CWDecomposition} we construct, for every $\bc\in \Ch^{=n}(K)_v^w$, a cycle $g_\bc$ that represents a generator in the simplicial homology group
\begin{equation}
	[g_\bc]\in H_n(|\Ch_{\leq \bc}(K)|_v^w, |\Ch_{<\bc}(K)|_v^w),
\end{equation}
As a consequence of $\ref{t:CW}$, the group $H_n(|\Ch^{\leq n}(K)_v^w|, |\Ch^{<n}(K)_v^w|)$ is a free group generated by $g_\bc$ for $\bc\in \Ch^{=n}(K)_v^w$.

\begin{thm}\label{t:Diff}
	Let $(K,v,w)$ be a finite bi-pointed $\square$--set having the proper non-looping covering, and let 
\[
	\partial_n:H_n(|\Ch^{\leq n}(K)_v^w|, |\Ch^{<n}(K)_v^w|)\to H_{n-1}(|\Ch^{\leq n-1}(K)_v^w|, |\Ch^{<n-1}(K)_v^w|)
\]	
be the differential in the cellular chain complex of $|\Ch(K)_v^w|$ (cf.\ \cite[p.\ 139]{H}). For a cube chain $\bc\in\Ch^{=n}(K)_v^w$ of type $(n_1,\dots,n_l)$ we have
	\[
		\partial(g_\bc)=\sum_{k=1}^{l} \sum_{r=1}^{n_k-1} \sum_{A\subseteq\{1,\dots,n_k\}:\; |A|=r} (-1)^{n_1+\dots+n_{k-1}+k+r+1}\sgn(A) g_{d_{k,A}(\bc)},
	\]
	where 
	\[
		\sgn(A)=\begin{cases}
			1 & \text{if $\sum_{i\in A}i\equiv \sum_{i=1}^r i \mod 2$,}\\
			-1 & \text{otherwise.}
		\end{cases}
	\]
\end{thm}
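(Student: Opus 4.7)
The plan is to compute $\partial g_\bc$ directly from the explicit construction of the relative cycle $g_\bc$ given in Section~\ref{s:CWDecomposition}. Because $|\Ch(K)_v^w|$ is a regular CW-complex by Theorem~\ref{t:CW}, the boundary of the closed cell $|\Ch_{\leq\bc}|$ is the union of the closed cells $|\Ch_{\leq\bc'}|$ for which $\bc'<\bc$ and $\dim\bc'=n-1$. Unfolding the definition of the partial order, these $\bc'$ are precisely the $d_{k,A}(\bc)$ with $0<|A|<n_k$ (each elementary split lowers the dimension by exactly one). Hence $\partial g_\bc=\sum_{k,r,A}[\bc:d_{k,A}(\bc)]\,g_{d_{k,A}(\bc)}$ with each incidence number $\pm 1$, and the content of the theorem is the explicit identification of this sign.

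Concretely, I would write $g_\bc$ as a signed sum of top-dimensional simplices of the order complex of $\Ch_{\leq\bc}$, each simplex being a maximal descending chain $\bc=\bc^{(0)}>\bc^{(1)}>\cdots>\bc^{(n)}$ built by iterating elementary splits. Applying the usual simplicial boundary, the faces obtained by omitting an intermediate vertex $\bc^{(i)}$ with $0<i<n$ still contain $\bc$ as a vertex, so they must cancel in pairs for $g_\bc$ to be a relative cycle modulo $|\Ch_{<\bc}|$; this cancellation is precisely what makes $g_\bc$ a cycle to begin with. The remaining faces, obtained by omitting $\bc^{(0)}=\bc$, regroup naturally according to the next vertex $\bc^{(1)}=d_{k,A}(\bc)$: the subcollection indexed by a fixed $\bc'=d_{k,A}(\bc)$ is a signed sum of all maximal descending chains of $\Ch_{\leq\bc'}$, hence is $\pm g_{\bc'}$ by the corresponding definition of $g_{\bc'}$.

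The remaining task, and the main obstacle, is the determination of the overall sign attached to each $g_{d_{k,A}(\bc)}$. I would carry this out by fixing one explicit maximal chain of $\Ch_{\leq d_{k,A}(\bc)}$, reading off its coefficient as it appears in $\partial g_\bc$ via the prefixed chain $\bc>d_{k,A}(\bc)>\cdots$, and comparing it with its coefficient in $g_{d_{k,A}(\bc)}$. Once the sign convention used to define $g_\bc$ is pinned down --- I expect it to be the sign of the permutation implementing each maximal chain as a sequence of elementary splits, read against a fixed total order on the elementary splits available to $\bc$ --- this reduces to a single permutation identity. The factor $\sgn(A)$ emerges from reordering $\{1,\dots,n_k\}$ as $\bar A$ followed by $A$, while the parity $(-1)^{n_1+\cdots+n_{k-1}+k+r+1}$ records the position at which the new split is inserted in the global indexing of elementary splits, together with an offset $r$ coming from the dimension of the newly created second factor. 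The pre-cubical identities~(\ref{e:DfComposition}) are what allow the interior faces to cancel in the second paragraph and guarantee that the two sign conventions can be compared cleanly.
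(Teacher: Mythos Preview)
Your overall strategy matches the paper's: compute the simplicial boundary of the explicit cycle, cancel interior faces, and regroup the surviving faces according to which codimension-one cell $d_{k,A}(\bc)$ they belong to. Where your proposal remains only a sketch is precisely at the step you yourself flag as ``the main obstacle''---the sign computation.

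The paper does not define $g_\bc$ by an ad hoc sign on maximal chains of $\Ch_{\leq\bc}(K)$; it sets $g_\bc=(I_\bc)_*(g_\bn)$, where $I_\bc:O_\bn\to\Ch_{\leq\bc}(K)$ is the poset isomorphism of Proposition~\ref{p:SubposetIso} and $g_\bn$ is a sum over pairs $(\sigma,\tau)\in\Sigma_\bn\times T^{n-l}_\bn$ with sign $\sgn(\sigma)\sgn_\bn(\tau)$, the second factor defined in~(\ref{e:SignOfTau}). All the real work is done in $O_\bn$, in Section~7: Proposition~\ref{p:DeltaFormula} shows that after the interior cancellation only the top-face term $\partial_{n-l}$ survives; Lemma~\ref{l:Sgn} compares $\sgn_\bn(\tau)$ with $\sgn_{\bn[k,r]}(\sfs_{n-l}\tau)$ and produces the factor $(-1)^{b_{k-1}+k+r+1}$; Proposition~\ref{p:SignOfA} together with Lemma~\ref{l:Strata} splits $\Sigma_\bn$ as $\coprod_A \Sigma_{\bn[k,r]}\,\xi_{k,A}$ and identifies $\sgn(\xi_{k,A})$ with $\sgn(A)$. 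These assemble into Proposition~\ref{p:GnDifferential}, and Proposition~\ref{p:CubeChainFunctoriality} then transfers the formula to $\Ch(K)$ (Proposition~\ref{p:GcDiff}). Your heuristic ``sign of the permutation implementing each maximal chain'' does not by itself pin down a sign, because the $\tau$-component carries its own sign that is not visible from the chain alone; and comparing a single maximal chain, as you propose, does not verify that the regrouped pieces really reproduce $g_{\bn[k,r]}$ rather than some other chain with the same support. Finally, the statement is for $\square$-sets with proper non-looping covering, not necessarily with height function; the paper handles this via the reduction in Section~9 (Proposition~\ref{p:ChainCover}), which your proposal does not mention.
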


As a consequence, the homology of $\vP(K)_v^w$ can be calculated using the formula above.

\section{Directed spaces}

In this Section, we recall a notion of d-space and introduce complete d-spaces, which are necessary to define the d-structure on the geometric realization of a d-simplicial complex. The following definitions are due to Grandis \cite{Gr}.

\begin{df}\label{d:dSpace}
	Let $X$ be a topological space and let $P(X)$ be its path space, i.e.\ the space of continuous maps $[0,1]\to X$ with compact-open topology.
	\begin{itemize}
	\item{\emph{A d-structure} on $X$ is a subset $\mathfrak{D}\subseteq P(X)$ which contains all constant paths, and is closed with respect to concatenations and non-decreasing reparametrizations.}
	\item{\emph{A d-space} is a pair $(X,\mathfrak{D})$, where $X$ is a topological space and $\mathfrak{D}$ is a d-structure on $X$.}
	\item{Let $(X,\fD)$, $(X',\fD')$ be d-spaces. A continuous map $f:X\to X'$ is \emph{a d-map} if it preserves d-structure, i.e.\ $f\circ\alpha\in \fD'$ for each $\alpha\in\fD$.}
	\item{\emph{A d-homeomorphism} is an invertible d-map.}
	\end{itemize}
\end{df}

The category of d-spaces and d-maps is complete and cocomplete. For an arbitrary family of paths $\mathfrak{S}\subseteq P(X)$ there exists the smallest d-structure $\overline{\fS}$ on $X$ containing $\fS$; it can be constructed as the intersection of all d-structures containing $\fS$, or by adding to $\fS$ all constant paths and all non-decreasing reparametrizations of concatenations of paths in $\fS$.

The following d-spaces play an important role in this paper:
\begin{itemize}
\item{\emph{the directed real line} $\vec{\R}=(\R,  \text{non-decreasing paths})$,}
\item{\emph{the directed $n$-cube} $\vec{I}^n=([0,1]^n, \text{paths with non-decreasing coordinates})$,}
\item{\emph{the directed $n$-simplex} $\vec{\Delta}^n=(\Delta^n,\vec{P}(\Delta^n))$, where
\[
	{\Delta}^n=\left\lbrace(t_0,\dots,t_n)\in [0,1]^{n+1}:\; \sum_{i=0}^n t_i=1\right\rbrace
\]
and $\alpha=(\alpha_0,\dots,\alpha_n)\in P(\Delta^n)$ is directed if and only if the functions 
\[
	[0,1]\ni t \mapsto \sum_{i\geq k}\alpha_k(t)\in [0,1]
\]
are non-decreasing for $k\in\{1,\dots,n\}$.
}
\end{itemize}

Let us recall the definition of complete d-spaces introduced in \cite{Z2} (called there "good" d-spaces).

\begin{df}
	Let $(X,\fD)$ be a d-space. A path $\alpha:[0,1]\to X$ is \emph{almost directed} if for every open subset $U\subseteq X$ and every $0\leq s<t \leq 1$ such that $\alpha([s,t])\subseteq U$, there exists a directed path $\beta\in\vec{P}(X)$ such that $\beta(0)=\alpha(s)$, $\beta(1)=\alpha(t)$ and $\beta([0,1])\subseteq U$.
\end{df}

The family of all almost directed paths $\mathfrak{D}_c$ on a d-space $(X,\mathfrak{D})$ is a d-structure on $X$. The d-space $(X,\fD_c)$ will be called \emph{the completion} of $(X,\fD)$; if a given d-space is equal to its completion, it is called \emph{complete}. If $\mathfrak{S}\subseteq P(X)$ is  an arbitrary family of paths, then its \emph{completion} $\overline{\fS}_c$ is the smallest complete d-structure containing $\fS$. Here follows the criterion which allows to verify whether a continuous map between complete d-spaces is a d-map.

\begin{prp}\label{p:CompletenessCriterion}
	Let $f:X\to X'$ be a continuous map and let $\fS\subseteq P(X)$, $\fS'\subseteq P(X')$ be families of paths. Assume that $f\circ \alpha\in \overline{\fS'}_c$ for every $\alpha\in \mathfrak{S}$. Then the map $f:(X,\overline{\fS}_c)\to (Y,\overline{\fS'}_c)$ is a d-map.
\end{prp}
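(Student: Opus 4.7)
The natural plan is to consider the pullback family
\[
	T = \{\alpha \in P(X) : f\circ\alpha \in \overline{\fS'}_c\} \subseteq P(X)
\]
and show that $T$ is a complete d-structure on $X$ containing $\fS$. Since by hypothesis $\fS\subseteq T$, and since $\overline{\fS}_c$ is by definition the smallest complete d-structure containing $\fS$, this will give $\overline{\fS}_c\subseteq T$, which is exactly the statement that $f$ sends every path in $\overline{\fS}_c$ to a path in $\overline{\fS'}_c$.

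First I would verify that $T$ is a d-structure. This is almost a formality: constant paths in $X$ are sent by $f$ to constant paths in $X'$, which lie in any d-structure; for concatenation, $f\circ(\alpha\ast\beta)=(f\circ\alpha)\ast(f\circ\beta)$, so closure of $\overline{\fS'}_c$ under concatenation transfers to $T$; the same observation applies to non-decreasing reparametrizations, since $f\circ(\alpha\circ\varphi)=(f\circ\alpha)\circ\varphi$.

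The main point—and the step I expect to be the core of the argument—is showing that $T$ is complete. Suppose $\alpha\in P(X)$ is almost directed with respect to $T$; I must show $f\circ\alpha\in \overline{\fS'}_c$, and since $\overline{\fS'}_c$ is already complete, it suffices to show $f\circ\alpha$ is almost directed with respect to $\overline{\fS'}_c$. So fix an open $U'\subseteq X'$ and $0\le s<t\le 1$ with $f\circ\alpha([s,t])\subseteq U'$. Then $U:=f^{-1}(U')$ is open in $X$ and $\alpha([s,t])\subseteq U$. By the almost-directedness of $\alpha$ with respect to $T$, there is a path $\beta\in T$ with $\beta(0)=\alpha(s)$, $\beta(1)=\alpha(t)$ and $\beta([0,1])\subseteq U$. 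The path $f\circ\beta$ then lies in $\overline{\fS'}_c$ (by definition of $T$), starts at $f(\alpha(s))$, ends at $f(\alpha(t))$, and has image inside $U'$. This exhibits $f\circ\alpha$ as almost directed with respect to $\overline{\fS'}_c$, so $f\circ\alpha\in \overline{\fS'}_c$ and therefore $\alpha\in T$.

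With $T$ a complete d-structure containing $\fS$, the minimality of $\overline{\fS}_c$ yields $\overline{\fS}_c\subseteq T$, which is precisely the assertion that $f:(X,\overline{\fS}_c)\to (Y,\overline{\fS'}_c)$ is a d-map. The main technical obstacle is verifying completeness of $T$, but it reduces cleanly to the fact that preimages of open sets under $f$ are open, so that an almost-directedness witness for $\alpha$ inside $f^{-1}(U')$ automatically pushes forward to an almost-directedness witness for $f\circ\alpha$ inside $U'$.
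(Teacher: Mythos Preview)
Your proof is correct, and the heart of it---pulling back an open set $U'$ to $f^{-1}(U')$ and pushing forward the almost-directedness witness---is exactly what the paper does. The packaging differs: the paper works directly, taking $\alpha\in\overline{\fS}_c$, using that $\alpha$ is almost directed with respect to $\overline{\fS}$, and then unpacking the witness $\beta\in\overline{\fS}$ as a reparametrized concatenation of paths from $\fS$ in order to see that $f\circ\beta\in\overline{\fS'}_c$. Your pullback d-structure $T$ and appeal to minimality sidestep that unpacking entirely; once you check that $T$ is a complete d-structure containing $\fS$, you are done. The trade-off is that your version relies on knowing $\overline{\fS'}_c$ is complete (which is true by definition), whereas the paper's version relies on the description of $\overline{\fS}_c$ as almost-directed paths relative to $\overline{\fS}$; both are immediate from the definitions, so the two arguments are of comparable length and difficulty.
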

\begin{proof}
	Fix a path $\alpha\in \overline{\fS}_c$, $0\leq s<t\leq 1$ and an open subset $U\subseteq Y$ such that $f(\alpha([s,t])\subseteq U$. Since $\alpha$ is almost directed with respect to $\overline{\fS}$, there exists a path $\beta\in \overline{\fS}$ such that $\beta(0)=\alpha(s)$, $\beta(1)=\alpha(t)$ and $\beta([0,1])\subseteq f^{-1}(U)$. The path $\beta$ is either constant, or it is a non-decreasing reparametrization of concatenation of paths in $\fS$. Thus, by assumption, $f\circ \beta$ is either constant, or it is an non-decreasing reparametrization of concatenation of paths in $\fS'$. This implies that $f\circ\alpha$ is almost directed with respect to $\overline{\fS'}$; therefore $f\circ\alpha\in\overline{\fS}_c$.
\end{proof}

For every $n\geq 0$, the directed $n$-cube and the directed $n$-simplex are complete d-spaces. The d-structure on $\vec{I}^n$ is generated by a family of paths
\begin{equation}\label{e:CubeGeneratingPaths}
	\{[0,1]\ni t\mapsto (x_1,\dots,x_{k-1},t,x_{k+1},\dots,x_n):\; k\in\{1,\dots,n\},\; x_i\in [0,1]\}
\end{equation}
and the d-structure on $\vec{\Delta}^n$ by
\begin{multline}\label{e:SimplexGeneratingPaths}
	\{[0,1]\ni t\mapsto (x_0,\dots,x_{k-2},(1-t)c,tc,x_{k+1},\dots,x_n):\\
	\; k\in \{1,\dots,n\},\; c,x_i\in [0,1],\; c+\sum_{i\neq k-1,k}{x_i}=1 \}.
\end{multline}

If it is clear which d-structure we consider on a given space $X$, we will denote it by $\vec{P}(X)$; the elements of $\vec{P}(X)$ will be called \emph{directed paths} or \emph{d-paths}. Given two points $x,y\in X$, let
\begin{equation}
	\vec{P}(X)_x^y:=\{\alpha\in \vec{P}(X):\; \alpha(0)=x,\; \alpha(1)=y\}
\end{equation}
be the space of directed  paths from $x$ to $y$. We extend a notion of directedness for paths defined on arbitrary closed intervals; a path $[a,b]\to X$ is called directed if its linear reparametrization $[0,1]\ni t\mapsto \alpha(a+t(b-a))\in X$ is directed. The space of such paths will be denoted by $\vec{P}_{[a,b]}(X)$.

\section{$\square$-sets}

In this Section we introduce $\square$-sets with height function, and prove that their geometric realizations are complete d-spaces. For a more detailed discussion on $\square$--sets see also \cite{FGR}.

\emph{The geometric realization} of a $\square$-set $K$ is a d-space
\begin{equation}\label{e:GeometricRealization}
	|K|=\coprod_{n\geq 0} K[n]\times \vec{I}^n/(d^\varepsilon_i(c),x)\sim (c,\delta^\varepsilon_i(x)),
\end{equation}
where $\delta^\varepsilon_i(s_1,\dots,s_{n-1})=(s_1,\dots,s_{i-1},\varepsilon, s_i,\dots,s_{n-1})$. A path $\alpha\in P(|K|)$ is directed if there exist numbers $0=t_0<t_1<\dots<t_l=1$, cubes $c_i\in K[n_i]$ and directed paths $\beta_i:[t_{i-1},t_i]\to \vec{I}^{n_i}$ such that $\alpha(t)=(c_i,\beta_i(t))$ for $t\in [t_{i-1},t_i]$. For every $x\in |K|$ there exists a unique cube $\supp(x)$ of $K$, called \emph{the support of $x$}, such that $x=(\supp(x),(t_1,\dots,t_n))$, and $t_i\neq 0,1$ for $i\in\{1,\dots,n\}$. For shortness, we will further write $\vP(K)$ instead of $\vP(|K|)$.

\begin{df}
	\emph{A height function} on a $\square$-set $K$ is a function $h:\coprod_{n\geq 0} K[n]\to \Z$ such that $h(d^\varepsilon_i(c))=h(c)+\varepsilon$ for every $n\geq 0$, $c\in K[n]$, $\varepsilon\in \{0,1\}$ and $i\in\{1,\dots,n\}$.
\end{df}

A height function $h$ on $K$ determines a d-map 
	\begin{equation}
		h:|K|\ni (c,(s_1,\dots,s_n))\mapsto h(c)+\sum_{i=1}^n s_i\in \vec{\R}
	\end{equation}
which will be also called a height function. If $\alpha\in\vec{P}(K)$ is a directed path, then $l_1(\alpha)=h(\alpha(1))-h(\alpha(0))$, where $l_1$ is the $L_1$-arc length of Raussen \cite[5.1]{R3}. In particular, if $\alpha\in \vec{P}(K)$ is not constant on any interval, then $h\circ\alpha$ is a strictly increasing function, which implies that all directed loops on a $\square$-set with height function are constant.

\begin{df}
	Let $K$ be a $\square$-set with height function. A d-path $\alpha:[a,b]\to |K|$ is \emph{natural} if $h(\alpha(t))=t$ for every $t\in [a,b]$.
	For $v,w\in K[0]$, let $\vec{N}(K)_v^w\subseteq \vec{P}_{[h(v),h(w)]}(|K|)_v^w$ be the space of natural paths from $v$ to $w$.
\end{df}

The space $\vec{N}(K)_v^w$ can be regarded as a subspace $\vec{P}(K)_v^w$ via the reparametrizing inclusion
\begin{equation}\label{e:ReparamInclusion}
	\vec{N}(K)_v^w\ni\alpha \mapsto \alpha\circ (t\mapsto a+(b-a)t) \in \vec{P}(K)_v^w.
\end{equation}
On the other hand, by \cite[2.15]{R1} there exists \emph{a naturalization map}
\begin{equation}\label{e:Nat}
	nat:\vec{P}(K)_v^w\to \vec{N}(K)_v^w,
\end{equation}
which is a left inverse of (\ref{e:ReparamInclusion}). As shown in \cite{R1}, these maps are mutually inverse homotopy equivalences.

 We conclude this Section with the following

\begin{prp}\label{p:CubeSetIsComplete}
	If $K$ is a $\square$-set with height function, then its geometric realization $|K|$ is a complete d-space.
\end{prp}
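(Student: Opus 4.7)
The plan is to verify the definition of completeness directly: given an almost directed path $\alpha:[0,1]\to|K|$, show that $\alpha$ belongs to the d-structure of $|K|$. The height function $h$ will be used throughout.

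First, I would observe that $h\circ\alpha$ is non-decreasing. Indeed, for any $s<t$, applying almost-directedness with $U=|K|$ yields a directed path $\beta$ from $\alpha(s)$ to $\alpha(t)$; since $h:|K|\to\vec{\R}$ is a d-map, $h\circ\beta$ is non-decreasing, so $h(\alpha(s))\le h(\alpha(t))$. An elaboration of the same idea shows that if $h(\alpha(s))=h(\alpha(t))$ then $\alpha$ is constant on $[s,t]$: the directed path $\beta$ produced above has constant height, hence is constant, because every non-constant directed path on a $\square$-set with height function strictly increases in height (as observed just after the definition of $h$). Consequently $\alpha(s)=\alpha(t)$. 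These two facts let me factor $\alpha$ through a natural path $\tilde\alpha:[a,b]\to|K|$ with $h\circ\tilde\alpha=\id_{[a,b]}$ via a non-decreasing reparametrization; since d-structures are closed under such reparametrizations, it is enough to prove that $\tilde\alpha$ is directed.

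Next, I would use compactness to obtain a finite partition $a=t_0<\dots<t_N=b$ such that each piece $\tilde\alpha([t_{k-1},t_k])$ lies in the closed cell $|c^{(k)}|$ of a single cube $c^{(k)}$. The sets $\tilde\alpha^{-1}(\interior|c|)$ form an open cover of $[a,b]$ as $c$ ranges over the finitely many cubes met by $\tilde\alpha$ (every point of $|K|$ lies in the interior of its support), and a Lebesgue-number argument produces a partition in which each open subinterval maps into a single open cell, so that the corresponding closed subinterval maps into the closed cell.

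Finally, on each subinterval I would show that $\tilde\alpha|_{[t_{k-1},t_k]}$ is directed by lifting through the characteristic map $\vec{I}^{n_k}\to|K|$ of $c^{(k)}$. On the interior of the subinterval the lift is uniquely determined, because the characteristic map is injective on $\interior\vec{I}^{n_k}$, and open subsets of $|K|$ contained in the open cell correspond to open subsets of $\interior\vec{I}^{n_k}$, so almost-directedness of $\tilde\alpha$ transfers to the lift. Completeness of $\vec{I}^{n_k}$ then delivers a directed lift, whence the restriction is a d-path of $|K|$, and concatenation of the pieces finishes the argument. The main obstacle is extending the lift unambiguously to the endpoints of each subinterval, where $\tilde\alpha$ may land on a lower-dimensional face on which the characteristic map identifies several preimages; I would handle this by exploiting height-monotonicity together with non-decreasingness of the individual cube coordinates of the lift on the open subinterval, which forces a unique continuous extension to the closed subinterval.
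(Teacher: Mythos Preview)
Your reduction to a natural path in Steps 1--3 is fine (the paper skips it and works directly with $\alpha$), but Step 4 contains a genuine gap. The sets $\tilde\alpha^{-1}(\interior|c|)$ do \emph{not} form an open cover of $[a,b]$: the open cell $\interior|c|$ of a cube that occurs as a proper face of some higher cube is not an open subset of $|K|$ (already a vertex, or an edge sitting in the boundary of a square, shows this). Hence the Lebesgue-number argument is unavailable, and nothing you have written prevents $\supp(\tilde\alpha(t))$ from oscillating among the finitely many cells meeting the image.

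This is precisely where the paper's argument diverges from yours. It introduces the smallest reflexive--transitive relation $\preceq$ on cubes satisfying $d^0_i(c)\preceq c$ and $c\preceq d^1_i(c)$, uses the height function to check that $\preceq$ is antisymmetric, and observes that $\supp(\beta(0))\preceq\supp(\beta(1))$ for every directed path $\beta$. Applying this to the short directed paths supplied by almost-directedness yields $s<s'\Rightarrow\supp(\alpha(s))\preceq\supp(\alpha(s'))$, so each set $J_c=\overline{\{s:\supp(\alpha(s))=c\}}$ is automatically a closed interval; that is what produces the finite partition into single-cube pieces without any compactness/Lebesgue input. The same monotonicity of supports is also what you would need in Step 5: to transfer almost-directedness to the lift you must know that the directed path $\beta$ produced inside an open set of $|K|$ actually remains in the open cell of $c^{(k)}$ (so that it lifts through the characteristic map), and the correspondence you invoke between open subsets of $|K|$ contained in the open cell and open subsets of $\interior\vec I^{n_k}$ only goes one way and does not by itself confine $\beta$ to that cell.
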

\begin{proof}
	Let $\preceq$ be the minimal reflexive and transitive relation on $\coprod_{n\geq 0} K[n]$ such that  $d^0_i(c)\preceq c$ and $c\preceq d^1_i(c)$ for $n\geq 0$, $c\in K[n]$, $i\in \{1,\dots,n\}$. The relation $\preceq$ is a partial order since, for $c\neq c'$, $c\preceq c'$ implies that either $h(c)<h(c')$, or $h(c)=h(c')$ and $\dim(c)<\dim(c')$. Moreover, immediately from definition follows that $\supp(\alpha(0))\preceq \supp(\alpha(1))$ for every directed path $\alpha$ on $|K|$.
	
	 Let $\alpha$ be an almost directed path in $|K|$ and let, for an arbitrary cube $c$ of $K$,   $J_c\subseteq [0,1]$ be the closure of the set $\{s\in [0,1]:\; \supp(\alpha(s))=c\}$. For $s<s'$ there exists a directed path from $\alpha(s)$ to $\alpha(s')$; therefore $\supp(\alpha(s))\preceq \supp(\alpha(s'))$. This implies that all the sets $J_c$ are either closed intervals or empty. Thus $\alpha$ is a concatenation of almost directed paths all of which lie in a single cube. Every such path is directed, then so is $\alpha$. 	
\end{proof}

\section{d-simplicial complexes}

In this Section, we recall the definition of d-simplicial complex and prove that every proper $\square$-set with height function has a d-simplicial triangulation.

\begin{df}[{\cite{Z1}}]
	\emph{A d-simplicial complex} $M$ is a triple $(V_M,S_M,\leq_M)$, where $(V_M,S_M)$ is a simplicial complex and $\leq_M$ is a binary relation of $V_M$ such that
	\begin{itemize}
	\item{for every simplex  $A\in S_M$, the restriction $(\leq_M)|_A$ is a total order on $A$,}
	\item{if $v\leq_M v'$, then $\{v,v'\}\in S_M$.}
	\end{itemize}
	\emph{A d-simplicial map} $f:M\to M'$ is a simplicial map such that $v\leq_M w$ implies $f(v)\leq_{M'}f(w)$ for $v,w\in V_{M}$.
	\emph{A geometric realization} of a d-simplicial complex $M$ is the space
	\[
		|M|=\left\lbrace \sum_{v\in V_M} t_vv :\; t_v\geq 0,\; \sum_{v\in V_M} t_v=1,\; \{v:\; t_v>0\}\in S_M\right\rbrace
	\]
	with the maximal topology and the minimal complete d-structure such that the inclusions of simplices 
	\begin{equation}\label{e:InclusionOfSimplex}
		\vec{\Delta}^n\ni (t_0,\dots,t_n) \mapsto \sum_{i=0}^n t_iv_i\in |M|	
	\end{equation}
	 are d-maps for all $\{v_0<\dots<v_n\}\in S_M$.
\end{df}

\begin{rem}
	The completion of d-structure is necessary to obtain proper d-structures on the triangulations of $\square$-sets. For example, the directed square $\vec{\square}^2$ admits a triangulation that is a d-simplicial complex with vertices $(i,j)$, $i,j\in\{0,1\}$, maximal simplices $((0,0)<(0,1)<(1,1))$ and $((0,0)<(1,0)<(1,1))$, and $(i,j)\leq (i',j')$ if and only if $i\leq i'$, $j\leq j'$. However, the staircase path on the picture below
\begin{equation}
\begin{tikzpicture}
	\draw[thick] (0,0)--(4,0)--(4,4)--(0,4)--(0,0);
	\draw (0,0)--(4,4);
	\draw[very thick,->] (0,0)--(1,0)--(1,0.5);
	\draw[very thick,->] (1,0.5)--(1,2)--(2.25,2);
	\draw[very thick,->] (2.25,2)--(2.5,2)--(2.5,2.75);
	\draw[very thick] (2.5,2.75)--(2.5,3)--(3.25,3)--(3.25,3.5)--(3.625,3.5)--(3.625,3.75)--(3.8125,3.75)--(3.8125,3.875)--(3.90625,3.875)--(3.90625,3.9375);
	\draw (-0.4,0.15)node {(0,0)};
	\draw (-0.4,3.85)node {(0,1)};
	\draw (4.4,3.85)node {(1,1)};
	\draw (4.4,0.15)node {(1,0)};
\end{tikzpicture}
\end{equation}	
	is not directed with respect to the (non-completed) d-structure induced by the inclusions of simplices, though it is directed with respect to the d-structure of the square.
\end{rem}

\begin{exa}
	If $P$ is a poset, then the nerve of $P$, denoted by $\cN P$, is defined by $V_{\cN P}=P$, $\leq_{\cN P}=\leq_P$ and $A\subseteq P$ is a simplex of $\cN P$ if and only if $A$ is a totally ordered subset.
\end{exa}

\begin{rem} 
	d-simplicial complexes can be regarded as special cases of simplicial sets. For a d-simplicial complex $M$ one can define a simplicial set whose $n$-simplices are functions $\sigma:\{0,\dots,n\}\to V_M$ such that $\sigma(i)\leq_M\sigma(j)$ for every $i\leq j$ and the image of $\sigma$ is a simplex of $M$.
\end{rem}

For the rest of the Section we assume that $K$ is a proper $\square$-set. We will construct a triangulation of $K$, i.e., a d-simplicial complex $\Tr_K$ such that $|K|$ and $\Tr_K$ are d-homeomorphic. Let
\begin{multline}
	S_{\Tr(K)}:=\{\{d_{f_0}(c), \dots,d_{f_k}(c)\}\subseteq K[0]:\\
	\; c\in K[n],\; f_0<f_1<\dots <f_k,\; f_i:\{1,\dots,n\}\to \{0,1\} \},
\end{multline}
where $f_j<f_k$ means that $f_j\neq f_k$ and $f_j(i)\leq f_k(i)$ for all $i$. Introduce a binary relation $\leq_{\Tr(K)}$ on $K[0]$ by
\begin{equation}
	v\leq_{\Tr(K)} v' \;\Leftrightarrow\; \exists_{c\in K[n]} \exists_{f\leq f':\{1,\dots,n\}\to \{0,1\}}\; d_f(c)=v,\; d_{f'}(c)=v'.
\end{equation}
Every element $A=\{d_{f_0}(c)<\dots<\dots,d_{f_k}(c)\}$, $c\in K[n]$ of $S_{\Tr(K)}$ can be written as
\[
	A=\{d_{f_0\circ e}(c')<\dots<\dots,d_{f_k\circ e}(c')\},
\]
where $c'=d_{f_k^{-1}(0),f_0^{-1}(1)}(c)$ and
\[
	e:\{1,\dots,n-|f_k^{-1}(0)|-|f_0^{-1}(1)|\}\to \{1,\dots,n\}\setminus (f_k^{-1}(0)\cup f_0^{-1}(1))
\]
is an increasing bijection. Thus, $A$ has a unique presentation such that $f_0\equiv 0$ and $f_1\equiv 1$. By the properness of $K$, the cube $c$ is determined by its extreme vertices $d^0(c)=d_{f_0}(c)$ and $d^1(c)=d_{f_k}(c)$ and, for every vertex $v$ of $c$, there exists a unique $f$ such that $v=d_f(c)$. Such a presentation of a simplex of $\Tr(K)$ will be called \emph{canonical}.

\begin{prp}
	If $K$ is a proper $\square$-set, then 
	\[
		\Tr(K)=(V_{\Tr(K)}:=K[0], S_{\Tr(K)}, \leq_{\Tr(K)})
	\]
	is a d-simplicial complex. 
\end{prp}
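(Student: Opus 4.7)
The plan is to check the three items of the d-simplicial complex definition in turn: that $(K[0], S_{\Tr(K)})$ is a simplicial complex, that $\leq_{\Tr(K)}$ satisfies the edge condition $v \leq_{\Tr(K)} v' \Rightarrow \{v,v'\} \in S_{\Tr(K)}$, and that $\leq_{\Tr(K)}$ restricts to a total order on each simplex.

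The two easier verifications come first. Every singleton $\{v\}$ lies in $S_{\Tr(K)}$ (take $c = v \in K[0]$ with $k = 0$), and a non-empty subset $\{d_{f_{i_0}}(c),\dots,d_{f_{i_j}}(c)\}$ of a simplex $\{d_{f_0}(c),\dots,d_{f_k}(c)\}$ with $f_0 < \dots < f_k$ is itself a simplex, since a sub-sequence of a strictly increasing chain is strictly increasing. For the edge condition: a witness $f \leq f'$ for $v \leq_{\Tr(K)} v'$ on some cube $c$ either has $f = f'$, in which case $\{v,v'\} = \{v\}$ is a singleton, or has $f < f'$, in which case $\{v,v'\}=\{d_f(c),d_{f'}(c)\}\in S_{\Tr(K)}$ directly by definition.

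The heart of the proof is the total-order condition. I would fix a simplex $A \in S_{\Tr(K)}$ in its canonical presentation $A = \{v_0,\dots,v_k\}$, $v_i := d_{f_i}(c)$, with $f_0 \equiv 0$, $f_k \equiv 1$, and $f_0 < f_1 < \dots < f_k$, as discussed immediately before the proposition. Since properness implies the non-self-linked property, the $v_i$'s are pairwise distinct. The key claim is that, restricted to pairs inside $A$, $v_i \leq_{\Tr(K)} v_j$ holds if and only if $i \leq j$; from this, reflexivity, antisymmetry, transitivity and totality all follow formally. The "if" direction is immediate from $f_i \leq f_j$. For "only if", given a witness $c' \in K[m]$ together with $g \leq g' : \{1,\dots,m\} \to \{0,1\}$ with $d_g(c') = v_i$ and $d_{g'}(c') = v_j$, I would form the sub-cube $c''' := d_h(c')$, where $h : \{1,\dots,m\} \to \{0,1,*\}$ is defined by $h \equiv 0$ on $(g')^{-1}(0)$, $h \equiv *$ on $g^{-1}(0) \cap (g')^{-1}(1)$, and $h \equiv 1$ on $g^{-1}(1)$ (these three sets partition the domain because $g \leq g'$). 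A direct application of the composition formula \eqref{e:DfComposition} yields $d^0(c''') = d_g(c') = v_i$ and $d^1(c''') = d_{g'}(c') = v_j$. On the other hand, since $f_i$ and $f_j$ are entries of the chain $f_0 < \dots < f_k$, they are comparable; writing $f_a \leq f_b$ with $\{a,b\} = \{i,j\}$, the same construction applied inside $c$ produces a sub-cube $c^*$ with extreme vertex set $\{v_i, v_j\}$ and $d^0(c^*) = v_a$, $d^1(c^*) = v_b$. Properness of $K$ forces $c^* = c'''$, hence $v_a = d^0(c^*) = d^0(c''') = v_i$, and distinctness of the vertices gives $a = i$, i.e., $f_i \leq f_j$ and so $i \leq j$.

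The main obstacle is this "only if" step: a priori the global witness $(c',g,g')$ has nothing to do with the canonical cube $c$ of $A$, and one needs the sub-cube $c'''$ together with properness to identify the two cubes from their common pair of extreme vertices and thereby recover the cube-intrinsic order on $A$.
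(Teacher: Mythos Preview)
Your proof is correct and follows essentially the same approach as the paper: both arguments reduce the total-order verification to forming, from the canonical cube $c$ and from an external witness $c'$, two sub-cubes with extreme vertex set $\{v_i,v_j\}$ and then invoking properness to compare them. The only difference is cosmetic---the paper argues by contradiction (assuming $v_j\leq_{\Tr(K)} v_i$ with $i<j$ yields two distinct cubes with the same unordered pair of extreme vertices), while you phrase the same step as a direct identification $c^*=c'''$ forcing $d^0(c^*)=v_i$.
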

\begin{proof}
	It follows immediately from definition that $\Tr(K)$ is a simplicial complex, and that $v\leq_{\Tr(K)} v'$ implies $\{v,v'\}\in S_{\Tr(K)}$. It remains to prove that, for every $A\in S_{\Tr(K)}$, the restriction $\leq_{\Tr(K)}|_A$ is a total order. We have
	\[
		A=\{v_0=d_{f_0}c\leq_{\Tr(K)}v_1=d_{f_1}c\leq_{\Tr(K)} \dots\leq_{\Tr(K)} v_k=d_{f_k}c\}
	\]
	for some $n\geq 0$, $c\in K[n]$ and $f_0<\dots< f_k$. Since $K$ is proper, the vertices $v_0, \dots,v_k$ are all different. Clearly $\leq_{\Tr(K)}|_A$ contains a total order. Assume that $v_j\leq_{\Tr(K)}v_i$ for $i<j$. Then there exists $c'\in K[n']$ and $f'_0,f'_1:\{1,\dots,n'\}\to \{0,1\}$, $f'_0<f'_1$, such that $d_{f'_0}c'=v_j$ and $d_{f'_1}c'=v_i$. We have
	\begin{align*}
		d^0(d_{f_j^{-1}(0), f_i^{-1}(1)}c)=v_i & \neq v_j=d^1(d_{f_j^{-1}(0), f_i^{-1}(1)}c),\\
		d^0(d_{(f'_1)^{-1}(0),(f'_0)^{-1}(1)}c')=v_j&\neq v_i =  d^1(d_{(f'_1)^{-1}(0),(f'_0)^{-1}(1)}c'),\\
	\end{align*}
	then $d_{f_j^{-1}(0), f_i^{-1}(1)}c$ and $d_{(f'_1)^{-1}(0),(f'_0)^{-1}(1)}c'$ are two different cubes with the same sets of extreme vertices, which contradicts the properness of $K$.
\end{proof}

The d-simplicial complex $\Tr(K)$ will be called \emph{the triangulation} of $K$.

Our next goal is to construct a d-homeomorphism between the geometric realization of $K$ and the geometric realization of its triangulation. Let $A$ be a simplex of $\Tr(K)$ and let $(d_{f_0}(c),\dots,d_{f_{k}}(c))$, $c\in K[n]$ be its unique presentation such that $f_0\equiv 0$, $f_k\equiv 1$. Define a continuous map
\begin{equation}
	F_A:|A|\ni \sum_{i=0}^k t_i d_{f_i}(c)\mapsto \left(c, \sum_{i=0}^k t_i f_i\right) \in |K|,
\end{equation}
where functions $f:\{1,\dots,n\}\to [0,1]$ are regarded as points $(f(1),\dots,f(n))\in \vec{I}^n$.

\begin{prp}\label{p:FCompatibility}
	If $x\in |A|\cap |A'|$, then $F_A(x)=F_{A'}(x)$.
\end{prp}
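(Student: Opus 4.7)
The plan is to reduce to the case where $A'\subseteq A$ is a face of $A$, and then to identify $F_A|_{|A'|}$ with $F_{A'}$ by a direct calculation inside $|K|$.

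First, since $(V_{\Tr(K)},S_{\Tr(K)})$ is a simplicial complex, the intersection $|A|\cap |A'|$ equals $|B|$, where $B:=A\cap A'$ is the common face spanned exactly by the vertices carrying positive weight in $x$. Thus it suffices to prove that $F_A|_{|B|}=F_B$ whenever $B$ is a face of $A$; applying this to $A$ and to $A'$ then gives $F_A(x)=F_B(x)=F_{A'}(x)$.

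Next, I would read off the canonical presentation of a subface. Let $A$ have canonical presentation $(d_{f_0}(c),\dots,d_{f_k}(c))$ with $c\in K[n]$, $f_0\equiv 0$, $f_k\equiv 1$, and let $B$ consist of the vertices $d_{f_{i_l}}(c)$ for some $0\le i_0<\dots<i_j\le k$. Following the discussion preceding the proposition, set $c':=d_{f_{i_j}^{-1}(0),\,f_{i_0}^{-1}(1)}(c)$ and let $e$ be the increasing bijection from $\{1,\dots,\dim c'\}$ onto $\{1,\dots,n\}\setminus(f_{i_j}^{-1}(0)\cup f_{i_0}^{-1}(1))$. The composition formula \eqref{e:DfComposition} yields $d_{f_{i_l}}(c)=d_{f_{i_l}\circ e}(c')$; because $f_{i_0}\le f_{i_l}\le f_{i_j}$ forces $f_{i_l}$ to vanish on $f_{i_j}^{-1}(0)$ and to equal $1$ on $f_{i_0}^{-1}(1)$, the functions $g_l:=f_{i_l}\circ e$ satisfy $g_0\equiv 0$, $g_j\equiv 1$ and furnish the canonical presentation of $B$.

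For $x=\sum_{l} s_l\,d_{f_{i_l}}(c)\in|B|$, one now has $F_A(x)=(c,\sum_l s_l f_{i_l})$ and $F_B(x)=(c',\sum_l s_l g_l)$, so it remains to show these represent the same point of $|K|$. Iterating the defining relation of \eqref{e:GeometricRealization} gives $(c',y)\sim(c,z)$, where $z\in \vec{I}^n$ is obtained from $y\in \vec{I}^{\dim c'}$ by inserting $0$'s at positions in $f_{i_j}^{-1}(0)$, $1$'s at positions in $f_{i_0}^{-1}(1)$, and the coordinates of $y$, via $e$, at the remaining positions. Plugging in $y=\sum_l s_l g_l$ and checking coordinate by coordinate---using $\sum_l s_l=1$ on the forced-$1$ coordinates and the forced values of the $f_{i_l}$ on the forced-$0$ and $1$ coordinates---yields $z=\sum_l s_l f_{i_l}$, closing the argument. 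The main obstacle is purely notational bookkeeping of the three kinds of coordinates (forced to $0$, forced to $1$, and passed through via $e$); no idea beyond the pre-cubical relations and the description of canonical presentations is required.
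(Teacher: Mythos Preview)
Your argument is correct and follows the same strategy as the paper: reduce to the inclusion of a face $B\subseteq A$ and verify $F_A|_{|B|}=F_B$ by unwinding the canonical presentation of $B$ and the identifications in $|K|$. The only difference is cosmetic---the paper checks the codimension-one case (dropping a single vertex $d_{f_j}(c)$, splitting into the subcases $j\neq 0,k$, $j=0$, $j=k$) and leaves the iteration implicit, whereas you treat an arbitrary face in one pass.
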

\begin{proof}
	It is sufficient to check this for
\begin{align*}
	A'&=(d_{f_0}(c),\dots,d_{f_k}(c)),\\
	A&=(d_{f_0}(c),\dots,d_{f_{j-1}}(c),d_{f_{j+1}}(c),\dots,d_{f_k}(c)),
\end{align*}	
$f_0\equiv 0$, $f_1\equiv 1$. If $j\neq 0,k$, then the preceding presentation is canonical and the equation $F_A=F_{A'}|_{|A|}$ is clearly satisfied. For $j=0$ the canonical presentation of $A$ is
	\[
		A=(d_{f_1\circ e}(d^1_{f^{-1}_0(1)}(c)),\dots,d_{f_k\circ e}(d^1_{f^{-1}_0(1)}(c))),
	\]
	where $e:\{1,\dots,n-|f^{-1}_1(1)|\}\to \{1,\dots,n\}\setminus f^{-1}_1(1)$ is an increasing bijection. We have
	\begin{multline*}
		f_{A}\left(\sum_{i=1}^k t_id_{f_i}(c)\right)
		=f_A\left(\sum_{i=1}^k t_i d_{f_i\circ e}(d^1_{f^{-1}_0(1)}(c))\right)
		=\left(d^1_{f^{-1}_0(1)}(c),\sum_{i=1}^k t_i (f_i\circ e) \right)\\
		=\left(c,\delta^1_{f^{-1}_0(1)}\left(\sum_{i=1}^k t_i (f_i\circ e)\right) \right)
		=\left(c,\sum_{i=1}^k t_i f_i\right)
		=f_{A'}\left(\sum_{i=1}^k t_id_{f_i}(c)\right).
	\end{multline*}
	The case $j=k$ is similar.
\end{proof}

As a consequence, the maps $F_A$ glue to a continuous map $F_K:|\Tr(K)|\to|K|$.

\begin{prp}
	$F_K$ is a d-homeomorphism.
\end{prp}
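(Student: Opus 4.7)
The plan is to show successively that $F_K$ is bijective, that it is a homeomorphism of underlying spaces, and that both $F_K$ and $F_K^{-1}$ are d-maps. The geometric content is that $F_K$ realises, cube by cube, the classical decomposition of $\vec{I}^n$ into the $n!$ order simplices indexed by permutations of the coordinates; global compatibility across cubes is already supplied by Proposition \ref{p:FCompatibility} together with the pre-cubical relations.

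For \emph{bijectivity}, I would represent a point of $|K|$ in its unique form $(c,(s_1,\dots,s_n))$ with $c=\supp(x)$ and $0<s_i<1$, choose a permutation $\sigma$ with $s_{\sigma(1)}\leq\dots\leq s_{\sigma(n)}$, and build the chain $f_0\equiv 0<f_1<\dots<f_n\equiv 1$ by declaring $f_i(\sigma(j))=1$ iff $j>n-i$. Setting $t_i=s_{\sigma(n-i+1)}-s_{\sigma(n-i)}$ (with $s_{\sigma(0)}=0$ and $s_{\sigma(n+1)}=1$) yields the unique preimage in the simplex $\{d_{f_0}(c),\dots,d_{f_n}(c)\}$; ties among the $s_{\sigma(j)}$ place the preimage on a common face, consistently with Proposition \ref{p:FCompatibility}, and properness of $K$ ensures the canonical presentation, and hence the preimage, are well defined. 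For the \emph{homeomorphism} property, $F_K$ is continuous by construction; the union $U_c\subseteq|\Tr(K)|$ of the $n!$ simplices whose canonical presentation uses $c$ is mapped onto $|c|$ via the classical triangulation of the cube, so $F_K^{-1}|_{|c|}$ is continuous for each $c$, and continuity of $F_K^{-1}$ follows since both $|K|$ and $|\Tr(K)|$ carry the final topology with respect to their characteristic maps.

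For the \emph{d-structure}, I would invoke the completion criterion \ref{p:CompletenessCriterion} applied to the explicit generating families (\ref{e:SimplexGeneratingPaths}) and (\ref{e:CubeGeneratingPaths}). A generating path in a simplex with canonical presentation $(d_{f_0}(c),\dots,d_{f_k}(c))$ interpolates linearly between $d_{f_{k-1}}(c)$ and $d_{f_k}(c)$; its $F_A$-image is an affine segment in $|c|$ with velocity proportional to $f_k-f_{k-1}\geq 0$, hence a directed path of $\vec{I}^n$. Conversely, a generating path of $\vec{I}^n$ varying only the $j$-th coordinate decomposes, at the finitely many instants where the sorting permutation changes, into affine pieces each of which lies in a single simplex of $U_c$ and, under $F_K^{-1}$, moves weight between the two consecutive vertices $d_{f_{k-1}}(c)$, $d_{f_k}(c)$ with $f_k-f_{k-1}=e_j$; each such piece is itself a generating path of $\vec{\Delta}^n$. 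The main obstacle I anticipate is the bookkeeping of this converse decomposition at simultaneous coordinate collisions---there a single generating path of the cube passes through lower-dimensional strata of $|\Tr(K)|$---and this is exactly where completeness (rather than the raw generating family) becomes essential, so that Proposition \ref{p:CompletenessCriterion} is the right tool.
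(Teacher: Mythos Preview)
Your proposal is correct and follows essentially the same route as the paper. The paper writes down the inverse explicitly as a map $G_K$ glued from pieces $G_{c,\sigma}$ on the order simplices $S_{c,\sigma}=\{(c,(t_1,\dots,t_n)):t_{\sigma(1)}\le\dots\le t_{\sigma(n)}\}$, which is exactly your permutation-indexed preimage; and for the d-structure it checks, just as you do, that simplex-generating paths go to directed segments in a cube and then invokes Proposition~\ref{p:CompletenessCriterion}, dismissing the converse direction with ``the similar argument applies for $G_K$''. One point to make explicit: when you apply Proposition~\ref{p:CompletenessCriterion} in the forward direction $F_K$, the target is $|K|$, and the criterion only lands you in the \emph{completion} of $\vec P(|K|)$; the paper closes this by citing Proposition~\ref{p:CubeSetIsComplete} (completeness of $|K|$, under the standing height-function hypothesis), and you should do the same.
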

\begin{proof}
	For $c\in K[n]$ and a permutation $\sigma$ of $\{1,\dots,n\}$ define a set
	\[
		S_{c,\sigma}:=\{(c,(t_1,\dots,t_n))\in |K|:\; t_{\sigma(1)}\leq t_{\sigma(2)}\leq\dots\leq t_{\sigma(n)}\}
	\]
	and a sequence of functions $f^\sigma_0,\dots,f^\sigma_n:\{1,\dots,n\}\to \{0,1\}$ by
	\[
		f^\sigma_i(j)=\begin{cases} 0 & \text{for $\sigma(j)>i$} \\ 1 & \text{for $\sigma(j)\leq i$.} \end{cases}
	\]
	Clearly the sets $S_{c,\sigma}$ cover $|K|$. Similarly to the proof of \ref{p:FCompatibility} we can show that the maps
	\[
		G_{c,\sigma}: S_{c,\sigma} \ni (c,(t_1,\dots,t_n)) \mapsto \sum_{i=0}^n (t_{\sigma(i+1)}-t_{\sigma(i)}) d_{f_i^\sigma}(c)\in |\Tr(K)|,
	\]
	where by convention $t_{\sigma(0)}=0$ and $t_{\sigma(n+1)}=1$, glue to the map $G_K:|K|\to|\Tr(K)|$, which is the inverse of $F$. Thus $F_K$ is a homeomorphism. Next, we need to prove that both $F_K$ and $G_K$ are d-maps. By definition, $\vec{P}(|\Tr(K)|)$ is a complete d-structure generated by paths having the form (cf.\ \ref{e:SimplexGeneratingPaths}, \ref{e:InclusionOfSimplex})
	\[
		\omega: s\mapsto b(1-s)d_{f_k}(c)+bsd_{f_{k+1}}(c)+\sum_{i\neq k,k+1} t_i d_{f_i}(c)
	\]
	for $n\geq 0$, $c\in K[n]$, $0\equiv f_0<\dots<f_n\equiv 1$, $k\in\{0,\dots,n-1\}$, $b+\sum_{i\neq k,k+1}t_i=1$. The image
	\[
		F_K(\omega(s))=F_c(\omega(s))= b(1-s)f_k+bsf_{k+1}+\sum_{i\neq k,k+1} t_i f_i
	\]
	is a directed path in $|c|\subseteq |K|$ since $f_k<f_{k+1}$. Since $|K|$ is a complete d-space (by \ref{p:CubeSetIsComplete}) from \ref{p:CompletenessCriterion} follows that $F_K$ is a d-map. The similar argument applies for $G_K$.
\end{proof}

We conclude with some obvious properties of the triangulation.

\begin{prp}\label{p:TraingulationProperties}
	Let $K$ be a finite proper $\square$-set with height function.
	\begin{enumerate}[(1)]
	\item{$F_K$ maps bijectively vertices of $|\Tr(K)|$ into vertices of $|K|$.}
	\item{For each simplex $A\in S_{\Tr(K)}$ there exists a cube of $K$ such that $F_K(|A|)\subseteq |c|$.}
	\item{$\Tr$ is a functor from the category of proper $\square$-sets into the category of d-simplicial complexes. Moreover, the maps $F_K$ define a natural equivalence of functors $K\mapsto |K|$ and  $K\mapsto |\Tr(K)|$.\qed}
	\end{enumerate}
\end{prp}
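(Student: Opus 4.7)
The proof of \ref{p:TraingulationProperties} decomposes into three essentially routine verifications, so the plan is to address each part in turn while keeping an eye on the only genuinely nontrivial point, namely the functoriality of $\Tr$ when a $\square$-map collapses cubes.

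For (1), I would observe that a vertex of $|\Tr(K)|$ is by definition an element $v\in V_{\Tr(K)}=K[0]$, and that $\{v\}$ is its own canonical presentation as the simplex $\{d_\emptyset(v)\}$, where $v$ is regarded as a $0$-cube. The formula defining $F_{\{v\}}$ then gives $F_K(v)=(v,()) \in K[0]\times \vec{I}^0\subseteq |K|$, which is exactly the image of $v$ in the geometric realization of $K$. Since distinct elements of $K[0]$ give distinct vertices of $|K|$, the assignment $v\mapsto F_K(v)$ is a bijection onto the vertex set of $|K|$.

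For (2), I would note that part (2) is immediate from the definition: for $A=\{d_{f_0}(c),\dots,d_{f_k}(c)\}$ in canonical form, the formula $F_A(\sum t_i d_{f_i}(c))=(c,\sum t_i f_i)$ lands in $|c|\subseteq|K|$, because $\sum t_i f_i$ is a convex combination of points of $\vec{I}^n$ and hence lies in $\vec{I}^n$.

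For (3), I would define $\Tr(f):\Tr(K)\to\Tr(K')$ on vertices as $f[0]:K[0]\to K'[0]$, and check the two conditions of a d-simplicial map. If $A=\{d_{f_0}(c),\dots,d_{f_k}(c)\}\in S_{\Tr(K)}$, then since $f$ commutes with face maps, $\Tr(f)(A)=\{d_{f_0}(f(c)),\dots,d_{f_k}(f(c))\}$, which equals its own canonical presentation on $f(c)\in K'[n]$ and therefore lies in $S_{\Tr(K')}$. The main subtle point (the main obstacle I see) is that $f$ need not be injective on the vertex set, so the images $d_{f_i}(f(c))$ may coincide; in that case the image is not literally a $k$-simplex but a simplex of lower dimension, which is nonetheless a face of the presented simplex and hence belongs to $S_{\Tr(K')}$. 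Order preservation follows just as easily: if $d_f(c)\leq_{\Tr(K)}d_{f'}(c)$ with $f\leq f'$, then $d_f(f(c))\leq_{\Tr(K')}d_{f'}(f(c))$.

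Finally, naturality of $F$ reduces to the equality $F_{K'}\circ|\Tr(f)|=|f|\circ F_K$ on each simplex, which I would verify by computing both sides on a canonical presentation: the left side sends $\sum t_i d_{f_i}(c)$ to $(f(c),\sum t_i f_i)$, and the right side, via the formula for $F_A$ followed by the definition of $|f|$ as $(c,x)\mapsto(f(c),x)$ on each cube, produces the same point. Combined with (1) and (2) this gives the asserted natural equivalence.
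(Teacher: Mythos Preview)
Your verification is correct and essentially complete; the paper itself offers no argument at all for this proposition (it is stated with a \qed and left as obvious), so there is nothing to compare against beyond noting that you have supplied the routine details the author omitted.

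One minor remark: the ``main obstacle'' you flag in part~(3), namely that a $\square$--map $f$ might collapse distinct vertices $d_{f_i}(c)$ of a simplex, does not in fact arise in the stated setting. The target $K'$ is required to be proper, hence non-self-linked, so $d_{f_i}(f(c))=d_{f_j}(f(c))$ forces $f_i=f_j$ and thus $i=j$. The image simplex therefore has the same cardinality as the original and is automatically in canonical form on $f(c)$, which also makes your naturality computation go through without any further case analysis. Your argument is not wrong, just slightly more cautious than necessary.
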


\section{Tame paths}

Recall \cite{Z1} that a d-simplicial complex $M$ has no loops if for every sequence of its vertices $v_0\leq \dots \leq v_n\leq v_0$ we have $v_0=\dots=v_n$. We say that a $\square$-set $K$ has no loops if, for every vertex $v\in K[0]$, every cube chain from $v$ to $v$ is empty. If $K$ admits a height function, then, clearly, it has no loops. Immediately from the construction follows that the triangulation of a proper $\square$--set having no loops also has no loops. In this Section we prove that the space of directed paths between two arbitrary vertices of a proper $\square$-set having no loops is homotopy equivalent to its subspace containing only tame paths, i.e.\ paths that cross from one cube to another at vertices only. It is a consequence of the results from \cite{Z1} for d--simplicial complexes.

\begin{df}
	Let $M$ be a d-simplicial complex. A directed path $\alpha\in\vec{P}(M)$ is \emph{tame} if, for each $0\leq s<t\leq 1$, there exists a simplex $A\in S_{M}$ such that $\alpha|_{[s,t]}\subseteq |A|$, or a vertex $v\in V_M$ such that $v\in \alpha([s,t])$. Let $\vec{P}_t(M)_v^w\subseteq \vec{P}(M)_v^w$  be the subspace of tame paths from $v$ to $w$, $v,w\in K[0]$.
\end{df}

\begin{prp}[{\cite[Section 6]{Z1}}]\label{p:TamingDeformation}
	Let $M$ be a finite d-simplicial complex having no loops. There exists a d--map $R_M:|M|\to|M|$ such that
	\begin{enumerate}[(1)]
	\item{$R_M(v)=v$ for every vertex $v\in V_M$,}
	\item{for every $x\in |K|$, there exists a simplex $A$ such that $x,R_M(x)\in |A|$,}
	\item{for $v,w\in V_M$ and $\alpha\in \vec{P(M)_v^w}$ the path $R_M\circ \alpha$ is tame.}
	\end{enumerate}
\end{prp}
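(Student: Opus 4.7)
The plan is to construct $R_M$ inductively over the skeleta of $M$. Set $R_M$ to be the identity on $\sk_0(M)$ and, assuming $R_M$ has been defined compatibly on the $(k-1)$-skeleton, extend it over each $k$-simplex $A = \{v_0 <_M \cdots <_M v_k\}$ via a local taming retraction $R_A : |A| \to |A|$ that fixes all vertices of $A$ and agrees with the previously constructed map on $\partial|A|$. The geometric idea for $R_A$ is a radial deformation of the interior of $|A|$ toward its boundary, choosing the deformation so that a generic straight crossing of $|A|$ gets pushed onto a concatenation of paths in proper faces of $A$ that are joined at vertices of $A$. Since each $R_A$ preserves $|A|$, property (2) is automatic: the carrier simplex of $x$ contains both $x$ and $R_M(x)$. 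Property (1) is built into the inductive definition.

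For directedness, I would invoke Proposition~\ref{p:CompletenessCriterion} with $\mathfrak{S}$ the family of generating paths for $|M|$ coming from the simplex inclusions \eqref{e:InclusionOfSimplex} and the generators \eqref{e:SimplexGeneratingPaths} of $\vec{P}(\vec{\Delta}^n)$. It then suffices to check, for each simplex $A$ and each generating path $\omega$ in $|A|$, that $R_A\circ \omega$ is directed; this reduces to a local calculation inside a single simplex, where one can verify that the radial deformation, chosen compatibly with the order $\leq_M|_A$, maps coordinate-increasing segments to directed paths in $|A|$.

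For property (3), decompose $\alpha\in\vec{P}(M)_v^w$ into a finite concatenation of maximal segments $\alpha_i$, each contained in a simplex $A_i$; on each segment the composition $R_{A_i}\circ\alpha_i$ either lies in a proper face of $A_i$ or passes through a vertex of $A_i$, which is exactly what tameness requires on each subinterval. The main obstacle is the organizational one: one must choose the family $\{R_A\}$ so that the restrictions to $|A\cap A'|$ coincide for every pair $A, A'$ of simplices (forcing a ``face-first'' construction), so that the assembled map $R_M$ is continuous and a d-map, and so that the taming genuinely terminates. This last point is where the no-loops hypothesis on $M$ is essential: it ensures that $\leq_M$ restricts to a coherent forward direction on each simplex used to define $R_A$, and it prevents the taming from generating paths that revisit vertices, guaranteeing that the resulting $R_M\circ\alpha$ has only finitely many vertex-crossings and is thus tame in the sense of the definition.
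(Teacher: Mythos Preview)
The paper does not prove this proposition; it is imported from \cite[Section~6]{Z1} and stated without argument. There is therefore no in-paper proof to compare your attempt against.

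Evaluated on its own, your sketch has the right architecture but leaves the decisive step unspecified. You describe $R_A$ only as ``a radial deformation of the interior of $|A|$ toward its boundary, choosing the deformation so that a generic straight crossing \dots\ gets pushed onto a concatenation of paths in proper faces joined at vertices.'' That clause is not a construction; it is a restatement of what you want $R_A$ to accomplish. A literal radial push toward $\partial|A|$ is either discontinuous at the barycenter or, if damped, does nothing to force crossings onto vertices. The property you then invoke in the tameness argument --- that $R_{A_i}\circ\alpha_i$ ``either lies in a proper face of $A_i$ or passes through a vertex of $A_i$'' --- is precisely the content that must be engineered, and nothing in your description supplies it.

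The argument for~(3) has a related gap. You decompose $\alpha$ into simplex-carried segments $\alpha_i$; such a decomposition exists, but the handoffs between consecutive segments occur on arbitrary faces, not at vertices. Even granting your claimed property of each $R_{A_i}$, you have not explained why the images of these handoff points become vertices under $R_M$, nor why iterating ``push into a proper face'' terminates at something tame rather than producing a path that still slides between faces of lower dimension. This is where an explicit formula for $R_M$ (and the no-loops hypothesis, to give a consistent direction in which to push) is genuinely needed; as written, the outline does not close.
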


The tameness of paths on $\square$--sets is defined quite similarly:
\begin{df}
	Let $K$ be a $\square$-set. A path $\alpha\in\vec{P}(|K|)$ is \emph{tame} if, for each $0\leq s<t\leq 1$, there exists a cube $c\in K[n]$ such that $\alpha|_{[s,t]}\subseteq |c|$, or a vertex $v\in K[0]$ such that $v\in \alpha([s,t])$.
\end{df}

For the remaining part of this Section we assume that $K$ is a proper $\square$--set with height function.

\begin{prp}\label{p:TamePreserving}
	If $\alpha\in \vec{P}(\Tr(K))$ is tame (in the simplicial sense), then $F_K\circ \alpha\in \vec{P}(K)$ is also tame (in the cubical sense).
\end{prp}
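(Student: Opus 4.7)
The plan is to unpack the two possibilities in the definition of simplicial tameness and transport each one through $F_K$ using the properties already established in Proposition \ref{p:TraingulationProperties}. Since $F_K$ is in particular a d-map, $F_K\circ\alpha$ is a directed path on $|K|$, so only the tameness condition needs to be checked.

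Fix $0\leq s<t\leq 1$. By the simplicial tameness of $\alpha$, one of the following holds: either (a) there is a simplex $A\in S_{\Tr(K)}$ with $\alpha([s,t])\subseteq |A|$, or (b) there is a vertex $v\in V_{\Tr(K)}$ with $v\in\alpha([s,t])$. In case (a), part (2) of Proposition \ref{p:TraingulationProperties} supplies a cube $c$ of $K$ with $F_K(|A|)\subseteq |c|$, whence $(F_K\circ\alpha)([s,t])\subseteq |c|$, as required. In case (b), part (1) of the same proposition ensures that $F_K(v)$ is a vertex of $K$, and $F_K(v)\in F_K(\alpha([s,t]))=(F_K\circ\alpha)([s,t])$, which again gives the required conclusion.

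Since one of the two alternatives holds for every choice of $s<t$, the path $F_K\circ\alpha$ satisfies the cubical tameness condition. There is no real obstacle here: the statement is a direct corollary of the fact that $F_K$ sends simplices into cubes and vertices to vertices, both of which were recorded in Proposition \ref{p:TraingulationProperties}.
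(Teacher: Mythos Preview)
Your proof is correct and follows exactly the route the paper takes: the paper's own proof is the single sentence ``It follows immediately from \ref{p:TraingulationProperties}.(1) and (2),'' and you have simply spelled out how those two items are applied.
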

\begin{proof}
	It follows immediately from \ref{p:TraingulationProperties}.(1) and (2).
\end{proof}

\begin{prp}\label{p:ConvexHomotopy}
	Let $X$ be a space and $f_1,f_2:X\to \vec{P}(K)_v^w$ continuous maps. Assume that, for every $t\in [0,1]$, there exists a cube $c$ such that $f_1(\alpha(t)),f_2(\alpha(t))\in |c|$. Then $f_1$ and $f_2$ are homotopic.
\end{prp}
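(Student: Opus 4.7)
The homotopy between $f_1$ and $f_2$ will be built by pointwise coordinatewise linear interpolation inside a common cube. Concretely, for $(x, s, t)\in X\times[0,1]\times[0,1]$, choose any cube $c$ of $K$ with $f_1(x)(t), f_2(x)(t)\in|c|$, write $f_i(x)(t)=(c, \xi_i(x,t))$ with $\xi_i(x,t)\in[0,1]^{\dim c}$, and set
\[
	H(x,s)(t) := \bigl(c,\,(1-s)\xi_1(x,t) + s\,\xi_2(x,t)\bigr).
\]
Then $H(x,0)=f_1(x)$, $H(x,1)=f_2(x)$ and the endpoints $v,w$ are preserved trivially.

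The first task is to verify that the formula does not depend on the choice of $c$. The face identifications $(d^\varepsilon_i c, y)\sim (c, \delta^\varepsilon_i y)$ are affine in $y$, so convex combinations descend through face inclusions; combined with properness of $K$, which forces any two cubes $c, c'$ that both contain $f_1(x)(t)$ and $f_2(x)(t)$ to share a common subface still containing both points, this shows the convex combination computed in $|c|$ and in $|c'|$ represent the same element of $|K|$.

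Next, I would check that for fixed $(x,s)$ the assignment $t\mapsto H(x,s)(t)$ is a d-path from $v$ to $w$. Since each $f_j(x)$ is directed, it comes with a finite partition of $[0,1]$ on which it stays in a single cube at a time. Taking a common refinement of the two partitions and refining further using the pointwise hypothesis together with the finiteness of $K$ (a standard compactness argument applied to the finite closed cover of $[0,1]$ by the sets $\{t : f_1(x)(t), f_2(x)(t)\in |c|\}$), I obtain a partition $0=t_0<\dots<t_l=1$ and cubes $c_i$ of $K$ such that both $f_1(x)([t_{i-1},t_i])$ and $f_2(x)([t_{i-1},t_i])$ lie in $|c_i|$. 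On each piece $H(x,s)$ is the coordinatewise convex combination of two coordinatewise non-decreasing paths in $\vec{I}^{\dim c_i}$, hence itself coordinatewise non-decreasing, hence directed in $|c_i|$; concatenating the pieces produces a directed path in $|K|$, and its continuity is local within each $|c_i|$.

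Finally, continuity of $H:X\times[0,1]\to\vec{P}(K)_v^w$ with the compact-open topology follows by the exponential law from continuity of $(x,s,t)\mapsto H(x,s)(t)$, which is verified locally inside a common cube. The main obstacle is the first step, well-definedness of the interpolation independently of the chosen cube; this is where properness of $K$ is used essentially, through the fact that overlapping cubes share a common face through which the interpolation consistently factors. The uniform-common-cube refinement in the second step, while routine, also deserves care since the pointwise hypothesis must be upgraded to a subinterval statement via compactness.
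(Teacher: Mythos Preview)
Your approach is exactly the paper's: define the homotopy by pointwise convex interpolation inside a common cube, using the fact that properness makes each characteristic map $|i_c|:\vec{I}^n\to|c|$ a d-homeomorphism so that coordinates (and hence convex combinations) make sense. The paper compresses all of your verification---well-definedness, directedness, continuity, and independence of the chosen cube---into a single sentence, whereas you spell out the compactness/partition argument for directedness and invoke the exponential law for continuity; the only step where your justification goes beyond the paper's is the common-subface claim for independence of $c$, which the paper simply asserts without argument.
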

\begin{proof}
	For $c\in K[n]$ the map  $|i_c|:\vec{I}^n\ni x\mapsto (c,x)\in |c|$ is a d-homeomorphism, since $K$ is proper. The homotopy between $f_1$ and $f_2$ is given by
	\[
		H_s(x)(t)=|i_c|((1-s)|i_c|^{-1}(f_1(\alpha(t)) + s|i_c|^{-1}(f_2(\alpha(t))).
	\]
	It is clear that this is well-defined (since convex combinations of d-paths in $\vec{I}^n$ are d-paths), continuous and does not depend on the choices of cubes $c$ for respective points.
\end{proof}

\begin{rem}
	If $K$ has height function, then convex combinations of natural paths are natural, so the analogue of \ref{p:ConvexHomotopy} holds also for maps into $\vec{N}(K)_v^w$.
\end{rem}

The main result of this Section is the following 
\begin{thm}\label{t:Taming}
	Assume that $K$ is a finite proper $\square$--set with height function. For $v,w\in K[0]$, the inclusion
	\[
		\vec{P}_{t}(K)_v^w \subseteq \vec{P}(K)_v^w
	\]
	is a homotopy equivalence.
\end{thm}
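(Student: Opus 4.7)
The plan is to reduce to the simplicial taming statement \ref{p:TamingDeformation} (from \cite{Z1}) by means of the triangulation functor $\Tr$. Set $M := \Tr(K)$. Since $K$ admits a height function it has no loops, hence neither does $M$, so \ref{p:TamingDeformation} supplies a taming d-map $R_M \colon |M| \to |M|$; moreover \ref{p:TraingulationProperties} gives a d-homeomorphism $F_K \colon |M| \to |K|$ that bijects vertices and carries every simplex into some cube. The natural candidate for a homotopy inverse of the inclusion $i$ is
\[
\rho \colon \vec{P}(K)_v^w \to \vec{P}_t(K)_v^w, \qquad \rho(\alpha) := F_K \circ R_M \circ F_K^{-1} \circ \alpha.
\]

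I would first verify that $\rho$ is well defined and continuous, then establish one of the two homotopies. Well-definedness is routine: $\rho(\alpha)$ is a d-path as a composition of d-maps; it starts at $v$ and ends at $w$ because $F_K^{-1}(v)$ and $F_K^{-1}(w)$ are vertices of $M$ by \ref{p:TraingulationProperties}(1) and $R_M$ fixes vertices by \ref{p:TamingDeformation}(1); and it is cubically tame since $R_M \circ F_K^{-1} \circ \alpha$ is simplicially tame by \ref{p:TamingDeformation}(3) and \ref{p:TamePreserving} converts simplicial tameness into cubical tameness. To prove $i \circ \rho \simeq \id_{\vec{P}(K)_v^w}$, fix any $\alpha$ and any $t$: \ref{p:TamingDeformation}(2) gives a simplex $A \in S_M$ containing both $F_K^{-1}(\alpha(t))$ and $R_M(F_K^{-1}(\alpha(t)))$, and by \ref{p:TraingulationProperties}(2) there is a cube $c$ with $F_K(|A|) \subseteq |c|$. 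Thus $\alpha(t), \rho(\alpha)(t) \in |c|$, and \ref{p:ConvexHomotopy} applied to $f_1 = \id$, $f_2 = i\circ\rho$ produces the homotopy.

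The main obstacle is the reverse composition $\rho \circ i \simeq \id_{\vec{P}_t(K)_v^w}$, because the convex-interpolation homotopy furnished by \ref{p:ConvexHomotopy} a priori lands in $\vec{P}(K)_v^w$ and need not consist of tame paths at intermediate times $s$. I would address this by refining the pointwise choice of cubes so that the interpolation respects the tame decomposition of $\alpha$. Writing a tame path $\alpha$ as a concatenation of subpaths $\alpha|_{[t_{i-1},t_i]} \subseteq |c_i|$ joining consecutive vertices $v_{i-1}, v_i$ of $K$, the key claim is that $\rho(\alpha)|_{[t_{i-1},t_i]}$ stays in the same cube $|c_i|$. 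This is a locality property of $R_M$: whenever $y \in |M|$ lies in the subcomplex of $M$ triangulating the support cube of $F_K(y)$, so does $R_M(y)$. Granted this, the convex combinations of $\alpha(t)$ and $\rho(\alpha)(t)$ can be taken inside $|c_i|$, every interpolating $H_s(\alpha)$ still passes through $v_{i-1}, v_i$ at times $t_{i-1}, t_i$ (since $R_M$ fixes vertices), and thus inherits the tame decomposition of $\alpha$. Verifying this locality requires unpacking the explicit construction of $R_M$ from \cite[Section 6]{Z1}, and is the hardest step; once it is in place, $i$ and $\rho$ are mutually inverse homotopy equivalences.
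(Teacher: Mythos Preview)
Your construction of $\rho$ and the homotopy $i\circ\rho\simeq\id$ via \ref{p:TamingDeformation}(2), \ref{p:TraingulationProperties}(2), and \ref{p:ConvexHomotopy} is exactly the paper's proof; the paper defines the same map $\alpha\mapsto F_K\circ R_{\Tr(K)}\circ F_K^{-1}\circ\alpha$, checks it lands in $\vec P_t(K)_v^w$ using \ref{p:TamingDeformation}(1),(3) and \ref{p:TamePreserving}, and then invokes the same three results.

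The paper, however, stops there: it simply asserts that this map ``is a homotopy inverse of the inclusion'' without separately treating $\rho\circ i\simeq\id_{\vec P_t}$. Your concern that the convex interpolation of \ref{p:ConvexHomotopy} need not remain inside $\vec P_t(K)_v^w$ at intermediate times is legitimate --- \ref{p:ConvexHomotopy} only produces homotopies in $\vec P(K)_v^w$ --- and the locality property of $R_M$ you propose to extract from \cite{Z1} is an honest way to close that gap. But this is more than the paper itself supplies. In short, your proof is the paper's proof together with an attempt to justify a step the paper leaves implicit.
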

\begin{proof}
	The triangulation $\Tr(K)$ is finite and has no loops. Let $R_{\Tr(K)}:|\Tr(K)|\to|\Tr(K)|$ be a map satisfying the conditions of \ref{p:TamingDeformation}. For $\alpha\in\vec{P}(K)_v^w$ the path $F_{K}\circ R_{\Tr(K)}\circ (F_{\Tr(K)})^{-1} \circ \alpha$ is a d--path with endpoints $v,w$ (\ref{p:TamingDeformation}.(1)) and is tame (\ref{p:TamingDeformation}.(3), \ref{p:TamePreserving}). Therefore the map
	\[
		\vec{P}(K)_v^w\ni \alpha\mapsto F_K\circ R_{\Tr(K)}\circ (F_K)^{-1}\circ \alpha\in  \vec{P}_{t}(\Tr(K))_v^w
	\]
	is well-defined and is a homotopy inverse of the inclusion $\vec{P}_{t}(K)_v^w \subseteq \vec{P}(K)_v^w$. The latter statement follows from \ref{p:TamingDeformation}.(2), \ref{p:ConvexHomotopy} and \ref{p:TraingulationProperties}.(2).
\end{proof}

\section{Cube chain cover}

Let $K$ be a finite proper $\square$-set with height function and let $v,w\in K[0]$. Let
\[
	 \vec{N}_t(K)_v^w=(nat)^{-1}(\vec{P}_t(K)_v^w),
\]
(cf.\ \ref{e:Nat}) denote the space of natural tame paths; thanks to \ref{t:Taming} and \cite[2.16]{R1} the spaces $\vec{N}_t(K)_v^w$, $\vec{N}(K)_v^w$, $\vec{P}_t(K)_v^w$ and $\vec{P}(K)_v^w$ are all homotopy equivalent via normalization maps and inclusions. In this Section we construct a good closed cover of $\vec{N}_t(K)_v^w$ indexed with the poset of cube chains from $v$ to $w$.

Recall that a cube chain from $v$ to $w$ is a sequence of cubes $\bc=(c_1,\dots,c_l)$ such that $d^0(c_1)=v$, $d^1(c_l)=w$ and $d^1(c_i)=d^0(c_{i-1})$ for $1\leq i<l$.  Recall also that
\begin{equation}\label{e:DifferentialOfCubeChains}
	d_{k,A}(\bc)=(c_1,\dots,c_{k-1},d^0_{\bar{A}}(c_k), d^1_{A}(c_k),c_{k+1},\dots,c_l)\in\Ch(K)_v^w. 
\end{equation}
for $k\in \{1,\dots,l\}$, $A\cap \bar{A}=\emptyset$, $A\cup \bar{A}=\{1,\dots,\dim(c_k)\}$ and that $\leq$ is the transitive-reflexive closure of relations $d_{k,A}(\bc)\leq \bc$  on $\Ch(K)_v^w$.
For a cube chain $\bc=(c_1,\dots,c_l)$ we will write $l^{\bc}=l$, $n^\bc_i=\dim(c_i)$, $v^\bc_i=d^0(c_{i+1})=d^1(c_i)$ and $b^\bc_k=n^\bc_1+\dots+n^\bc_k$. The upper index $\bc$ will be omitted if it does not lead to confusion.

\begin{df}
	We say that a path $\alpha\in\vec{N}_t(K)_v^w$ \emph{lies in a cube chain $\bc\in\Ch(K)_v^w$}  if 
	\[
		\alpha([b^\bc_{i-1},b^\bc_i])\subseteq |c_i|
	\]
	for $i\in\{1,\dots,l\}$. Let $\vec{N}(K,\bc)\subseteq \vec{N}_t(K)$ be the subspace of tame paths lying in $\bc$.
\end{df}

Notice that $\alpha(b^\bc_i)=v^\bc_i$ for $\alpha\in \vec{N}(K,\bc)$, $i\in \{0,\dots,l^\bc\}$.

For $v,w,z\in K[0]$ and cube chains $\bc=(c_1,\dots,c_l)\in\Ch(K)_v^w$, $\bc'=(c'_1,\dots,c'_{l'})\in\Ch(K)_w^z$ we define \emph{the concatenation $\bc*\bc'\in \Ch(K)_v^z$} of $\bc$ and $\bc'$ by
\begin{equation}
	\mathbf{c}*\mathbf{c}'=(c_1,\dots,c_l,c'_1,\dots,c'_{l'}).
\end{equation}
The concatenation of paths induces a homeomorphism
\begin{equation}
	\vec{N}(K,\bc)\times \vec{N}(K,\bc')\subseteq \vec{N}(K,\bc*\bc').
\end{equation}

\begin{prp}\label{p:CoverProperties}
	Let $v,w\in K[0]$,  $\bc,\bc'\in\Ch(K)_v^w$.
	\begin{enumerate}[(1)]
	\item{$\vec{N}(K,\bc)$ is a contractible closed subspace of $\vec{N}_t(K)_v^w$.}
	\item{The intersection $\vec{N}(K,\bc)\cap \vec{N}(K,\bc')$ is either empty or there exists a cube chain $\bc\cap\bc'\in \Ch(K)_v^w$ such that $\vec{N}(K,\bc\cap\bc')=\vec{N}(K,\bc)\cap \vec{N}(K,\bc')$.}
	\item{$\vec{N}(K,\bc)\subseteq \vec{N}(K,\bc')$ if and only if $\bc\leq \bc'$.}
	\item{$\bigcup_{\bc\in\Ch(K)}\vec{N}(K,\bc)=\vec{N}_t(K)$.}
	\end{enumerate}
\end{prp}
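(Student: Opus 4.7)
My plan is to prove the four claims in the order (1), (4), (3), (2), relying throughout on two basic facts: (i) by properness of $K$, the characteristic d-map $\vec{I}^n\to|c|$ is a d-homeomorphism (noted in the proof of \ref{p:ConvexHomotopy}), so a natural d-path in $|c|$ from $d^0(c)$ to $d^1(c)$ is the same thing as a natural d-path in $\vec{I}^n$ from $\vec 0$ to $\vec 1$; and (ii) since $K$ has a height function and is proper, a natural tame path meets vertices only at integer times in $[h(v),h(w)]$, and on any sub-interval whose endpoints are vertices, the minimal cube containing its image is uniquely determined---as a face of any ambient cube spanned by the two endpoint-vertices, using tameness to produce the ambient cube and properness to identify the face by its extreme vertices.

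For (1), closedness is immediate since each condition $\alpha([b^\bc_{i-1},b^\bc_i])\subseteq |c_i|$ is the preimage of a closed set under an evaluation map. Splitting $\alpha\in\vec{N}(K,\bc)$ at the times $b^\bc_i$ and applying (i) gives a homeomorphism
\[
\vec{N}(K,\bc)\;\cong\;\prod_{i=1}^{l}\vec{N}(\vec{I}^{n_i})_{\vec 0}^{\vec 1},
\]
in which each factor is a convex (hence contractible) subset of $C([0,n_i],[0,1]^{n_i})$, because pointwise convex combinations of natural d-paths from $\vec 0$ to $\vec 1$ are natural and directed. For (4), given $\alpha\in\vec{N}_t(K)_v^w$, I list the finitely many integer times $t_0<\dots<t_l$ at which $\alpha$ lands on a vertex and use (ii) to pick the unique minimal cube $c_i$ with $\alpha([t_{i-1},t_i])\subseteq|c_i|$; naturality forces $\dim c_i=t_i-t_{i-1}$, so $\bc:=(c_1,\dots,c_l)\in\Ch(K)_v^w$ is a cube chain with $\alpha\in\vec{N}(K,\bc)$.

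For the easy direction of (3), I reduce by induction to the generating step $\bc=d_{k,A}(\bc')$: any $\alpha\in\vec{N}(K,\bc)$ lies in $d^0_{\bar A}(c'_k)$ and $d^1_A(c'_k)$ on the two halves of $[b^{\bc'}_{k-1},b^{\bc'}_k]$, both faces of $c'_k$, hence in $c'_k$ on the whole interval. For the converse I test the inclusion on the ``diagonal'' path $\alpha_0\in\vec{N}(K,\bc)$ obtained by concatenating the main diagonals of each $|c_i|\cong\vec{I}^{n_i}$; since $\alpha_0$ is strictly interior to $c_i$ on the open sub-interval $(b^\bc_{i-1},b^\bc_i)$, its vertex-visits are exactly $\{(b^\bc_i,v^\bc_i)\}$. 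The assumption therefore forces $\{(b^{\bc'}_j,v^{\bc'}_j)\}\subseteq\{(b^\bc_i,v^\bc_i)\}$, and for each $j$ the consecutive cubes of $\bc$ inside $[b^{\bc'}_{j-1},b^{\bc'}_j]$ are faces of $c'_j$ (evaluate $\alpha_0$ at a point interior to $c_i$, which lies in $|c'_j|$). An easy combinatorial induction expresses such a refinement of $(c'_j)$ as a composition of $d_{k,A}$-operations.

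For (2), suppose $\alpha\in\vec{N}(K,\bc)\cap\vec{N}(K,\bc')$ exists. Since $\alpha(b^\bc_i)=v^\bc_i$ and $\alpha(b^{\bc'}_j)=v^{\bc'}_j$, the combined data $T^\cap=\{b^\bc_i\}\cup\{b^{\bc'}_j\}$ and its vertex sequence depend only on $\bc,\bc'$. Running (4) with this refined partition produces a cube chain $\bc\cap\bc'\in\Ch(K)_v^w$ that is independent of $\alpha$; since its vertex-sequence refines those of $\bc$ and of $\bc'$, step (3) gives $\bc\cap\bc'\leq\bc$ and $\bc\cap\bc'\leq\bc'$, hence $\vec{N}(K,\bc\cap\bc')\subseteq\vec{N}(K,\bc)\cap\vec{N}(K,\bc')$; the opposite inclusion holds because any $\beta$ in the intersection visits all $T^\cap$-vertices and has its between-vertex cubes pinned down by (ii). The hard part, I expect, will be the uniqueness assertion in (ii): all three nontrivial claims rest on extracting \emph{the} cube $c^\cap_i$ between two vertex-visits, and I will need to combine tameness (to supply an ambient cube containing each short segment) with the proper-cube property (to recognize the minimal face with the prescribed extreme vertices) in order to see that this cube is independent of the choice of ambient witness.
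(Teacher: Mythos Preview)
Your argument is correct. The organisation, however, genuinely differs from the paper's.

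\textbf{How the paper proceeds.} The paper keeps the order (1), (2), (3), (4). For (2) it argues by induction on the total length $n$: assuming $n^{\bc}_1\leq n^{\bc'}_1$, the vertex $v^{\bc}_1=\alpha(n^{\bc}_1)$ lies in $|c'_1|$, so properness identifies $c_1=d^0_{\bar A}(c'_1)$ for a suitable $A$, and one peels off the first cube to reduce to shorter chains. Part (3) is then deduced from (2) via the equivalence $\vec N(K,\bc)\subseteq\vec N(K,\bc')\Leftrightarrow \bc=\bc\cap\bc'$, again inductively.

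\textbf{How you proceed.} You reverse the dependency: (3) is proved directly by testing the inclusion on the diagonal path and then invoking a combinatorial lemma (a chain of faces of a single cube $c'_j$ with matching extreme vertices is obtained from $(c'_j)$ by iterated $d_{k,A}$'s). For (2) you build $\bc\cap\bc'$ globally from the merged break set $T^\cap$, using properness to see that on each subinterval the cube with the prescribed extreme vertices is unique and independent of the witness $\alpha$.

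\textbf{What each buys.} The paper's inductive (2) is self-contained and never needs the face-chain lemma you package into (3); on the other hand, your construction makes $\bc\cap\bc'$ completely explicit (its break set is $T^\cap$, its cubes are the unique ones with the forced extreme vertices), which is conceptually pleasant and closer in spirit to the later identification $\Ch_{\leq(c)}(K)\cong O_n$ in Section~\ref{s:CWDecomposition}.

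\textbf{One imprecision to fix.} In your (2), the phrase ``step (3) gives $\bc\cap\bc'\leq\bc$'' overshoots: the statement of (3) converts between $\leq$ and inclusion of $\vec N$-spaces, which is circular here. What you actually use is the \emph{lemma inside} your proof of (3)---each $c^\cap_i$ is a face of the appropriate $c_p$ with the right extreme vertices, hence the refinement is realised by iterated $d_{k,A}$'s---followed by the easy direction of (3). State that lemma separately and the logic is clean.
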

\begin{proof}
	Assume that $h(v)=0$, $h(w)=n$, $\bc=(c_1,\dots,c_l)$, $\bc'=(c_1',\dots,c_{l'}')$. \\
	(1) Let $\gamma\in \vec{N}(K,\bc)$ be the diagonal of $\bc$, i.e.\  a path given by
	\[
		\gamma(t)=\left(c_i,\left(\tfrac{t-b^\bc_{i-1}}{n^\bc_i},\dots,\tfrac{t-b^\bc_{i-1}}{n^\bc_i}\right)\right)
	\]
	for $t\in[b^\bc_{i-1},b^\bc_i]$. The identity map of $\vec{N}(K,\bc)$ and the constant map with value $\gamma$ satisfy the assumptions of \ref{p:ConvexHomotopy}; therefore the space $\vec{N}(K,\bc)$ is contractible. Closedness of $\vec{N}(K,\bc)$ is clear.\\
	(2) Assume that $\alpha\in \vec{N}(K,\bc)\cap \vec{N}(K,\bc')$. The proof is by induction with respect to $n$. If $n=1$, then $\bc=(c_1)=(c_1')=\bc'$ since $c_1$ and $c'_1$ have the same extreme vertices. Assume that $n>1$ and $n^\bc_1< n^{\bc'}_1$. Since $\alpha(n^\bc_1)=v^\bc_1$ and $\alpha([0,n^{\bc'}_1])\subseteq |c'_1|$ then $v^\bc_1$ is a vertex of a cube $c_1'$, say $v^\bc_1=d_{A,\bar{A}}(c_1')$ for $A\cap \bar{A}=\emptyset$, $A\cup \bar{A}=\{1,\dots,n^{\bc'}_1\}$. We have $d^0(d^0_{A}(c'_1))=v=d^0(c_1)$ and $d^1(d^0_A(c_1'))=v^\bc_1=d^1(c_1)$; by the properness of $K$ this implies $c_1=d^0_A(c'_1)$. Furthermore, $\alpha({[n^\bc_1,n^{\bc'}_1]})\subseteq |d^1_{\bar{A}}(c_1')|$, and then $\alpha|_{[n_1^\bc,n]}$ lies in a cube chain $(d^1_{\bar{A}}(c_1'),c_2',\dots,c_{l'}')$. We have
	\begin{multline*}
		\vec{N}(K,\bc)\cap \vec{N}(K,\bc')=\vec{N}(K,\bc)\cap \vec{N}(K,(c_1,d^1_{\bar{A}}(c_1'),c'_2,\dots,c'_n))=\\
		\vec{N}(K,(c_1))\times (\vec{N}(K,(c_2,\dots,c_n))\cap \vec{N}(K,(d^1_{\bar{A}}(c_1'),c'_2,\dots,c'_n)))=\\
		\vec{N}(K,c_1*((c_2,\dots,c_n)\cap(d^1_{\bar{A}}(c_1'),c'_2,\dots,c'_n)))
	\end{multline*}
	since $(c_2,\dots,c_n)\cap(d^1_{\bar{A}}(c_1'),c'_2,\dots,c'_n)$ exists by the inductive assumption. The case $n_1^\bc=n_1^{\bc'}$ is similar, only the term $d^1_{\bar{A}}(c'_1)$ is dropped (since it is a vertex).\\
(3) Immediately from the definition, we have $\vec{N}(K,d_{k,A}(\bc))\subseteq \vec{N}(K,\bc)$. Assume that $\vec{N}(K,\bc)\subseteq \vec{N}(K,\bc')$; this is equivalent to the equation $\bc=\bc\cap\bc'$. Assume that the statement is true for all pairs of chains having length less than $n$. Using the argument from the previous point we obtain
\[
	\bc\cap\bc'=c_1*((c_2,\dots,c_n)\cap(d^1_{\bar{A}}(c_1'),c'_2,\dots,c'_n))
\]
By the inductive assumption, $(c_2,\dots,c_n)\leq(d^1_{\bar{A}}(c_1'),c'_2,\dots,c'_n)$ and therefore $\bc\leq\bc'$.\\
(4) Let $\alpha:[0,n]\to |K|$ be a natural tame path from $v$ to $w$. If $\alpha(t)$ is a vertex then $t$ is an integer. Thus, there is a finite sequence $0=k_0<\dots<k_l=n$ of integers such that $\alpha(k_i)=v_i$ is a vertex, and restrictions $\alpha|_{(k_{i-1},k_i)}$ do not contain vertices in its images. By the tameness of $\alpha$, for each $i\in\{1,\dots,l\}$ there exists a cube $c_i$ such that $\alpha([k_{i-1},k_i])\subseteq |c_i|$. Obviously $v_{i-1}$ and $v_i$ are vertices of $c_i$ and then there exist unique (by properness) subsets $A,B\subseteq \{1,\dots,\dim(c_i)\}$ such that $d_{A,\bar{A}}(c)=v_{i-1}$, $d_{B,\bar{B}}(c)=v_i$ (where $\bar{A},\bar{B}$ stand for suitable complements). The segment $\alpha|_{[k_{i-1},k_i]}$ lies in $c'_i:=d_{B,\vec{A}}(c_i)$ and $v_{i-1}=d^0(c'_i)$, $v_i=d^1(c'_i)$. Eventually, $\alpha$ lies in a cube chain $(c'_1,\dots,c'_l)$.
\end{proof}

This allows us to prove Theorem \ref{t:Cover} in the following special case. Let $\mathbf{fph}\square\Set_*^*$ be the category of finite proper $\square$--sets with height function and $\square$--maps that preserve height functions. For $(K,v,w)\in\mathbf{fph}\square\Set_*^*$ let $\varepsilon_{(K,v,w)}$ be the composition
\begin{equation}
	\varepsilon_{(K,v,w)}:\vP(K)_v^w \supseteq \vec{P}_t(K)_v^w \xrightarrow{nat} \vec{N}_t(K)_v^w \buildrel\simeq\over\leftarrow \hocolim_{\bc\in \Ch(K,\bc)} \vec{N}(K,\bc)\buildrel\simeq\over\to |\Ch(K)_v^w|.
\end{equation}
All the maps in the sequence and homotopy equivalences. This follows from \ref{t:Taming} for the left-most inclusion, \cite[2.16]{R1} for the $nat$ map, and from \cite[4.1]{S} and \ref{p:CoverProperties} for the remaining two maps. This implies that $\varepsilon_{(K,v,w)}$ is well-defined up to homotopy, and it is a homotopy equivalence. Furthermore, all the maps are functorial with respect to $(K,v,w)$. As a consequence, we obtain the following.

\begin{prp}\label{p:Cover}
	The maps $\varepsilon_{(K,v,w)}$ define a natural equivalence of functors
	\[
		\vP\simeq |\Ch|:\mathbf{fph}\square\Set_*^*\to\hTop.\qed
	\]
\end{prp}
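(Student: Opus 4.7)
The plan is to verify that the composite $\varepsilon_{(K,v,w)}$ is a well-defined homotopy equivalence, then promote this to a natural equivalence of functors. Each of the four arrows in the defining zigzag is already known to be a homotopy equivalence: the inclusion $\vec{P}_t(K)_v^w \hookrightarrow \vec{P}(K)_v^w$ by Theorem \ref{t:Taming} (applicable because every $K$ in $\mathbf{fph}\square\Set_*^*$ is proper and admits a height function, which forces the absence of directed loops); the naturalization map $nat$ by \cite[2.16]{R1}, which restricts to an equivalence on tame paths since $nat$ is only a reparametrization and tameness is a property of the image; and the two maps out of the hocolim by the Segal-type nerve theorem \cite[4.1]{S} applied to the cover $\{\vec{N}(K,\bc)\}_{\bc\in\Ch(K)_v^w}$. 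Proposition \ref{p:CoverProperties} supplies the needed hypotheses: each piece is contractible and closed, pairwise intersections are again of the form $\vec{N}(K,\bc\cap\bc')$ hence contractible when nonempty, containments are governed by $\leq$, and the pieces exhaust $\vec{N}_t(K)_v^w$. Moreover, the nerve of this cover is by construction the order complex $|\Ch(K)_v^w|$, so both canonical maps out of the hocolim are equivalences.

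For naturality, let $f:K \to K'$ be a $\square$-map preserving the height function. Since each $f[n]:K[n]\to K'[n]$ preserves dimension and commutes with face maps, the assignment $(c_1,\dots,c_l)\mapsto(f(c_1),\dots,f(c_l))$ defines a poset map $\Ch(f):\Ch(K)_v^w\to\Ch(K')_{f(v)}^{f(w)}$, and postcomposition with $|f|$ carries $\vec{N}(K,\bc)$ into $\vec{N}(K',\Ch(f)(\bc))$. Consequently, every square in the zigzag commutes strictly in $\Sp$: the first three for trivial reasons, and the hocolim square because $f$ induces a natural transformation of poset-indexed diagrams of spaces. Passing to the homotopy category, the classes $[\varepsilon_{(K,v,w)}]$ assemble into a natural transformation $\vP\Rightarrow|\Ch|$ which is pointwise an isomorphism in $\hTop$.

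The main technical point is the invocation of \cite[4.1]{S}. One needs to verify that the diagram $\bc\mapsto\vec{N}(K,\bc)$ over $\Ch(K)_v^w$ satisfies the cofibrancy and good-cover hypotheses of that reference: namely, that the inclusions are cofibrations (so that the canonical map from the hocolim to $\vec{N}_t(K)_v^w$ is a weak equivalence) and that each $\vec{N}(K,\bc)$ is contractible, the latter being precisely Proposition \ref{p:CoverProperties}(1), obtained via the convex-combination homotopy of Proposition \ref{p:ConvexHomotopy}. These verifications reduce to the observation that each $\vec{N}(K,\bc)$ is homeomorphic to a product of simplices parametrizing natural d-paths inside the cubes $\vec{I}^{n_i}$, which makes all the relevant inclusions into closed NDR-pairs. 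Once this point-set check is made, the remainder of the proof is formal bookkeeping over the zigzag.
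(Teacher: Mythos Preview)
Your proposal is correct and follows exactly the paper's approach: the paper's proof is the paragraph immediately preceding the proposition, invoking \ref{t:Taming}, \cite[2.16]{R1}, and \cite[4.1]{S} together with \ref{p:CoverProperties} for the four arrows, and asserting functoriality without elaboration. Your only substantive addition is the explicit cofibrancy discussion for the hocolim-to-colimit map; note, though, that $\vec{N}(K,\bc)$ is not literally a finite product of simplices but an infinite-dimensional convex set of paths---this slip is harmless, since what the argument actually needs (contractibility and closedness) is supplied by \ref{p:CoverProperties}(1).
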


\section{Permutahedra}

In this Section we study the posets of faces of products of permutohedra (see \cite[p.18]{Ziegler}), which play an important role in the description of the posets of cube chains on $\square$--sets. For a positive integer $n$, the face poset of the $(n-1)$--dimensional permutahedron is isomorphic to the poset of weak strict orderings on $\{1,\dots,n\}$, ordered by inclusion. A weak strict ordering is a reflexive and transitive relation such that any two elements are comparable, though, not necessarily anti-symmetric. Every weak strict ordering $\sqsubseteq$ on $\{1,\dots,n\}$ determines the unique surjective function
\begin{equation}
	f_\sqsubseteq: \{1,\dots,n\}\to \{1,\dots,k\},
\end{equation}
called \emph{the characteristic function} of $\sqsubseteq$, such that $i\sqsubseteq j$ if and only if $f_\sqsubseteq(i)\leq f_\sqsubseteq(j)$. In this Section, we use such functions rather than weak orderings.
Throughout the whole Section, $n$ stands for a fixed positive integer.

\begin{df}
	Let $O_n$ be a poset whose elements are surjective functions
	\[
		f:\{1,\dots,n\}\to \{1,\dots,k\},
	\]
	where $k=k(f)$ is a positive integer, and $f\preceq g$ if and only if there exists a non-decreasing function $h$ such that $g=h\circ f$.
\end{df}
Clearly, the function $f$ is determined uniquely. The poset $O_n$ has the greatest element: the constant function with the value $1$, which will be denoted by $f_n$. Let
\begin{equation}
	\partial O_n=O_n\setminus \{f_n\}.
\end{equation}

\begin{prp}\label{p:PermIso}
	The pairs $(|O^\Pi_n|,|\partial O^\Pi_{n}|)$ and $(D^{n-1},S^{n-2})$ are homeomorphic.
\end{prp}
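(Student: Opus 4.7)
The plan is to identify $O_n$ with the face poset of the $(n-1)$-dimensional permutahedron $\Pi_{n-1}$ (augmented with $\Pi_{n-1}$ itself as greatest element) and then invoke the standard fact that the order complex of the face poset of a convex polytope is its barycentric subdivision.

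First I would make the correspondence explicit: to a surjection $f:\{1,\dots,n\}\to\{1,\dots,k\}$ assign the ordered set partition $(f^{-1}(1),\dots,f^{-1}(k))$ of $\{1,\dots,n\}$, and then the face $F_f$ of $\Pi_{n-1}$ corresponding to that ordered partition. This is a standard bijection between ordered set partitions and faces of the permutahedron, with $k=1$ yielding the whole polytope (the top element $f_n$) and $k=n$ yielding the vertices. The dimension of $F_f$ is $n-k$, so $F_{f_n}$ is the unique face of dimension $n-1$.

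Next I would verify that $f\preceq g$ in $O_n$ corresponds to face inclusion $F_f\subseteq F_g$. Unpacking the definition, $g=h\circ f$ for a non-decreasing $h:\{1,\dots,k(f)\}\to\{1,\dots,k(g)\}$ exactly when the ordered partition of $g$ is obtained from that of $f$ by merging \emph{consecutive} blocks; and merging consecutive blocks is precisely the face-inclusion relation for the permutahedron. Hence $O_n$ is order-isomorphic to the face poset of $\Pi_{n-1}$ together with its top element, while $\partial O_n$ is order-isomorphic to the poset of proper faces.

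Finally I would apply the classical fact that for any convex polytope $P$ of dimension $d$, the order complex (nerve) of its face poset augmented by $P$ is canonically the barycentric subdivision of $P$, with the order complex of proper faces being the barycentric subdivision of $\partial P$. Taking $P=\Pi_{n-1}\cong D^{n-1}$ and $\partial P\cong S^{n-2}$ yields the claim. The main obstacle — indeed the only non-routine step — is checking that the combinatorial order on $O_n$ really corresponds to geometric face inclusion in $\Pi_{n-1}$; everything else is standard polytope theory. The remainder is a cosmetic verification that the homeomorphism $|O_n|\to\Pi_{n-1}$ restricts to $|\partial O_n|\to\partial\Pi_{n-1}$, which follows immediately from the construction of the barycentric subdivision.
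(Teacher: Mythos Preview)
Your proposal is correct and follows essentially the same route as the paper: identify $O_n$ with the face lattice of the $(n-1)$--dimensional permutahedron (the paper simply cites Ziegler for this), and then use that the nerve of the face lattice is the barycentric subdivision of the polytope, so that $(|O_n|,|\partial O_n|)\cong(P^{n-1},\partial P^{n-1})\cong(D^{n-1},S^{n-2})$. Your version is more explicit about the bijection via ordered set partitions and the verification that $\preceq$ matches face inclusion, but the underlying argument is identical.
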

\begin{proof}	
	As mentioned before, the poset $O^\Pi_n$ is isomorphic to the face lattice of $(n-1)$-dimensional permutohedron $P^{n-1}$ (cf.\ \cite[p.18]{Ziegler}), with $f_n$ corresponding to its body (i.e.\, the single $(n-1)$--dimensional cell) . Then the pair $(|O^\Pi_n|,|\partial O^\Pi_n|)$ is homeomorphic to $(P^{n-1},\partial P^{n-1})$ and hence to $(D^{n-1},S^{n-2})$.
\end{proof}
Let
\begin{itemize}
\item{$\Seq(n)$ be the set of all sequences of positive integers $\bn=(n_1,\dots,n_{l(\bn)})$ such that $n=n_1+\dots+n_{l(\bn)}$,}
\item{$b_k(\bn)=\sum_{j=1}^k n_j$, for $k\in\{0,\dots,l(\bn)\}$,}
\item{$f_{\bn}:\{1,\dots,n\}\to \{1,\dots,l(\bn)\}$ be the unique non-decreasing surjective function that takes value $k$ for exactly $n_k$ integers, i.e., $f_{\bn}(i)=k$ if and only if $b_{k-1}(\bn)<i\leq b_k(\bn)$,}
\item{$O_\bn:=(O_n)_{\preceq f_\bn}=\{f\in O_n:\; f\preceq f_\bn\}$.}
\end{itemize}
Notice that $O_{(n)}=O_n$. Whenever it would not lead to confusion, we will write $l$ or $b_k$ instead of $l(\bn)$ or $b_k(\bn)$.

Fix $\bn\in\Seq(n)$. For every $f\in O_{\bn}$, there exists a non-decreasing function $h:\{1,\dots,k(f)\}\to\{1,\dots,l\}$ such that $h\circ f=f_\bn$. Thus, for $k\in\{1,\dots,l\}$, $f$ determines a function
\begin{equation}\label{e:PreceqIso}
	f^k:\{1,\dots,n_k\}\xrightarrow{i\mapsto i+b_{k-1}} f_\bn^{-1}(k)\xrightarrow{f_\bn} h^{-1}(k)\xrightarrow{j\mapsto j+1-\min(h^{-1}(k))}\{1,\dots,r_k\},
\end{equation}
which is an element of $O_{n_k}$. On the other hand, a sequence
\[(f^k:\{1,\dots,n_k\}\to \{1,\dots,r_k\})\in O_{n_k},\]
for  $k\in \{1,\dots,l\}$, determines an element $f\in O_\bn$ such that
\begin{equation}
	f(i)=f^k(i-b_{k-1})+\sum_{j=1}^{k-1} r_j.
\end{equation}

These constructions give an isomorphism $O_{\mathbf{n}} \cong O_{n_1}\times\dots\times O_{n_l}$. Thus, $O_\bn$ is isomorphic to the face lattice of the product of permutahedra of dimensions $n_1-1,\dots,n_k-1$ respectively; as a consequence, there is a homeomorphism 
\begin{equation}\label{e:PermIso}
	(|O_{\mathbf{n}}|, |\partial O_{\mathbf{n}}|)\simeq (D^{n-l},S^{n-l-1}),
\end{equation}
where $\partial O_{\mathbf{n}}:=O_{\mathbf{n}}\setminus \{f_\bn\}$, which generalizes Proposition \ref{p:PermIso}.

The main goal of this Section is to construct a fundamental class of the pair (\ref{e:PermIso}) in simplicial homology \cite[p. 104]{H}; to achieve this, we need to introduce a notation for the simplices of $\cN O^\Pi_{n}$. Let
\begin{itemize}
	\item{$\Sigma_n$ be the set of permutations of $\{1,\dots,n\}$,}
	\item{$T^d_n$ be the set of functions $\tau:\{1,\dots,n-1\}\to \{0,\dots,d+1\}$ such that $\tau^{-1}(j)\neq\emptyset$ for $0<j\leq d$.}
\end{itemize}
For $\bn\in\Seq(n)$ define $\tau_\bn\in T^0_n$ by $\tau_\bn(i)=f_\bn(i+1)-f_\bn(i)$, and for $\tau\in T^0_n$ define $f^\tau\in O_n$ by
\begin{equation}
	f^\tau(i)=1+\sum_{i=1}^{k-1} \tau(i).
\end{equation}
Clearly, $f^{\tau_\bn}=f_\bn$, and this defines a bijection between $T^0_n$ and $\Seq(n)$. For $\tau\in T^d_n$ let 
\begin{equation}
	\Sigma_\tau=\{\sigma\in\Sigma_n:\; f^\tau\sigma = f^\tau\}.
\end{equation}
Let us enlist some properties of functions $f^\tau$:
\begin{prp} {\ } \label{p:PropertiesOfFST}
	\begin{enumerate}[(a)]
		\item{If $\tau,\psi\in T^0_n$, then $f^\tau\preceq f^\psi$ if and only if $\tau(i)\geq \psi(i)$ for all $i\in \{1,\dots,n-1\}$.}
		\item{$f^\tau\sigma\preceq f^\psi\varphi$ if and only if $f^\tau\preceq f^\psi$ and $\sigma\phi^{-1}\in\Sigma_\psi$.}
		\item{Every element of $O^\Pi_{n}$ has the form $f^\tau\sigma$ for $\sigma\in\Sigma_n$, $\tau\in T^0_n$.}
	\end{enumerate}
\end{prp}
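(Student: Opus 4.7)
The plan is to dispatch (a) and (c) by direct unwinding of the definitions, and then reduce (b) to a single combinatorial lemma about which permutations leave $f^\psi$ non-decreasing after precomposition.

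For (a), observe that $f^\tau(k)=1+\sum_{i<k}\tau(i)$ is a non-decreasing step function on $\{1,\dots,n\}$ whose set of strict ascents, $\{k:f^\tau(k+1)>f^\tau(k)\}$, equals $\tau^{-1}(1)$ (recall that $\tau\in T^0_n$ is $\{0,1\}$-valued). A non-decreasing $h$ with $f^\psi=h\circ f^\tau$ exists iff $f^\psi$ is constant on every maximal interval of constancy of $f^\tau$, i.e., iff every ascent of $\psi$ is an ascent of $\tau$, which is exactly $\tau(i)\geq\psi(i)$ pointwise; surjectivity of $f^\tau$ onto $\{1,\dots,k(f^\tau)\}$ ensures that, once this condition holds, $h$ is uniquely well-defined and automatically non-decreasing. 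For (c), given $f\in O_n$ with $n_j:=|f^{-1}(j)|$ and $\bn:=(n_1,\dots,n_{k(f)})\in\Seq(n)$, any permutation $\sigma$ that sends each block $\{b_{j-1}(\bn)+1,\dots,b_j(\bn)\}$ bijectively onto $f^{-1}(j)$ satisfies $f\circ\sigma=f_\bn=f^{\tau_\bn}$, so $f=f^{\tau_\bn}\sigma^{-1}$, which is of the required form.

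The substance is (b), and the hinge is the lemma: for any $\pi\in\Sigma_n$, the composition $f^\psi\circ\pi$ is non-decreasing if and only if $\pi\in\Sigma_\psi$. This is what I expect to be the main (if small) obstacle, since I need both implications. One direction is trivial ($\pi\in\Sigma_\psi$ gives $f^\psi\pi=f^\psi$, which is non-decreasing). For the other, note that $f^\psi\pi$ has the same value-multiset as $f^\psi$; if it is also non-decreasing and surjective onto $\{1,\dots,k(f^\psi)\}$ with the same preimage sizes, then by uniqueness of the non-decreasing step function with a prescribed profile we must have $f^\psi\pi=f^\psi$, i.e., $\pi\in\Sigma_\psi$.

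Granted the lemma, (b) falls out. Forward: if $f^\psi=h\circ f^\tau$ with $h$ non-decreasing, and $\sigma\varphi^{-1}\in\Sigma_\psi$, then $f^\psi\sigma=f^\psi\varphi$, so $f^\psi\varphi=f^\psi\sigma=h\circ(f^\tau\sigma)$, whence $f^\tau\sigma\preceq f^\psi\varphi$. Reverse: given $f^\psi\varphi=h\circ(f^\tau\sigma)$ for some non-decreasing $h$, rewrite as $f^\psi\circ(\varphi\sigma^{-1})=h\circ f^\tau$; the right-hand side is non-decreasing as a composition of non-decreasing maps, so by the lemma $\varphi\sigma^{-1}\in\Sigma_\psi$, and since $\Sigma_\psi$ is a subgroup, also $\sigma\varphi^{-1}\in\Sigma_\psi$. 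Moreover, membership in $\Sigma_\psi$ yields $f^\psi\varphi\sigma^{-1}=f^\psi$, which combined with $f^\psi\circ(\varphi\sigma^{-1})=h\circ f^\tau$ gives $f^\psi=h\circ f^\tau$, i.e., $f^\tau\preceq f^\psi$.
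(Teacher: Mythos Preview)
Your argument is correct and follows the same route as the paper: part (a) via the ascent/level-set description of $f^\tau$, part (c) by choosing a permutation sorting the fibers, and part (b) by reducing to the observation that $f^\psi\pi$ is non-decreasing iff $\pi\in\Sigma_\psi$. In fact you are more explicit than the paper, which at the crucial step simply asserts ``$f^\psi\varphi\sigma^{-1}=hf^\tau$ is a non-decreasing function, hence $\varphi\sigma^{-1}\in\Sigma_\psi$'' without spelling out the multiset-of-values justification you give.
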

\begin{proof}
	To prove (a), assume that $\tau(i)\geq \psi(i)$ for $\tau,\psi\in T^0_n$, $i\in\{1,\dots,n-1\}$. Clearly $\tau(i)=0$ implies that $\psi(i)=0$; thus, if $\tau(i)=\tau(j)$, then $\psi(i)=\psi(j)$. As a consequence, the formula $h(k)=(f^\psi\circ (f^\tau)^{-1})(k)$ defines a non-decreasing surjection such that $f^\psi=h\circ f^\tau$. On the other hand, if $\tau(i)=0$ and $\psi(i)=1$, then $f^\psi(i)\neq f^\psi(i+1)$ and $f^\tau(i)=f^{\tau}(i+1)$; therefore, $f^\psi$ cannot be written as a composition $h\circ f^\tau$.
	
If $f^\tau\preceq f^\psi$ and $\sigma\phi^{-1}\in\Sigma_\psi$, then
\[
	f^\psi\varphi=f^\psi \sigma\varphi^{-1}\varphi=f^\psi\sigma=hf^\tau\sigma,
\]
where $h$ is a function such that $f^\psi=hf^\tau$. If $f^\psi\varphi=hf^\tau\sigma$, then $f^\psi\varphi\sigma^{-1}=hf^\tau$ is a non-decreasing function. Hence $\varphi\sigma^{-1}\in \Sigma_\psi$ and $f^\psi=hf^\tau$, which implies (b). The point (c) is clear.
\end{proof}

Let us introduce the following maps between integers:
\begin{alignat*}{2}
	\sfd_k(i)&={\begin{cases}
		i & \text{for $i<k$}\\
		i+1 & \text{for $i\geq k$}
	\end{cases}} &\qquad  \sfs_k(i)&={\begin{cases}
		i & \text{for $i\leq k$}\\
		i-1 & \text{for $i> k$}
	\end{cases}}\\ 
	\sfp_k(i)&={\begin{cases}
		0 & \text{for $i\leq k$}\\
		1 & \text{for $i> k$}
	\end{cases}} & \qquad \sft_k(i)&={\begin{cases}
		k & \text{for $i= k+1$}\\
		k+1 & \text{for $i= k$}\\
		i & \text{for $i\neq k,k+1$.}
	\end{cases}}\\ 
\end{alignat*}
futhermore, for $r\leq s$ denote $\sft_r^s=\sft_r\sft_{r+1}\dots \sft_{s-1}$; clearly $\sft_r^s(i)=i$ for $i<r$ or $i>s$, $\sft_r^s(s)=r$, and $\sft_r^s(i)=i+1$ for $r\geq i<s$.

Every element $\tau\in T^d_n$ determines a sequence $(\sfp_0\circ \tau, \dots, \sfp_d\circ \tau)$ of $d+1$ elements of $T^n_0$. Thus, for $\sigma\in\Sigma_{n}$ and $\tau\in T^d_{n}$, we can define a $d$--simplex in $O^\Pi_{n}$:
\begin{equation}\label{e:SimplexA}
		a[\sigma,\tau]:=(f^{\sfp_0\tau}\sigma, f^{\sfp_1\tau}\sigma,\dots, f^{\sfp_d\tau}\sigma).
\end{equation}
Notice that  \ref{p:PropertiesOfFST}.(b) implies that $f^{\sfp_0\tau}\sigma\preceq f^{\sfp_1\tau}\sigma\preceq\dots\preceq f^{\sfp_d\tau}\sigma$.

\begin{prp}
	For $\sigma\in\Sigma_n$, $\tau\in T^d_n$ we have
\begin{enumerate}[(a)]
	\item{$\partial_i(a[\sigma,\tau])=a[\sigma,\sfs_i\tau]$, where $\partial_i$ denotes the simplex with $i$--th vertex skipped, $i\in\{0,\dots,d\}$,}
	\item{$a[\sigma,\tau]=a[\varphi,\psi]$ if and only if $\tau=\psi$ and $\sigma\varphi^{-1}\in \Sigma_\tau$.}
\end{enumerate}
\end{prp}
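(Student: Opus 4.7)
The plan is to prove (a) by a direct unwinding of definitions and to prove (b) by applying Proposition \ref{p:PropertiesOfFST}(b) vertex by vertex, supplemented by a small argument about level sets of $f^\tau$.

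For (a), the key calculation is the identity
\[
	\sfp_j\circ \sfs_i=\begin{cases} \sfp_j & \text{if } j<i,\\ \sfp_{j+1} & \text{if } j\geq i,\end{cases}
\]
which is a short case split on whether the argument is $\leq i$ or $>i$. Substituting this into the definition (\ref{e:SimplexA}) of $a[\sigma,\sfs_i\tau]$ produces precisely the sequence $(f^{\sfp_0\tau}\sigma,\dots,f^{\sfp_{i-1}\tau}\sigma,f^{\sfp_{i+1}\tau}\sigma,\dots,f^{\sfp_d\tau}\sigma)$, which is $\partial_i a[\sigma,\tau]$.

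For the $(\Leftarrow)$ direction of (b), I would observe that $\tau(i)\geq \sfp_j\tau(i)$ holds pointwise because $\sfp_j(0)=0$, so Proposition \ref{p:PropertiesOfFST}(a) yields $f^\tau\preceq f^{\sfp_j\tau}$; write $f^{\sfp_j\tau}=h_j\circ f^\tau$ for some non-decreasing $h_j$. Then $\sigma\varphi^{-1}\in\Sigma_\tau$, equivalently $f^\tau\sigma=f^\tau\varphi$, forces $f^{\sfp_j\tau}\sigma=h_jf^\tau\sigma=h_jf^\tau\varphi=f^{\sfp_j\tau}\varphi$, so every pair of corresponding vertices coincides. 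For $(\Rightarrow)$, reading the vertex equality $f^{\sfp_j\tau}\sigma=f^{\sfp_j\psi}\varphi$ as two $\preceq$-relations and feeding each into Proposition \ref{p:PropertiesOfFST}(b) gives $f^{\sfp_j\tau}=f^{\sfp_j\psi}$ and $\sigma\varphi^{-1}\in\Sigma_{\sfp_j\psi}$ for every $j$. Since $\tau\mapsto f^\tau$ is injective on $T^0_n$, this forces $\sfp_j\tau=\sfp_j\psi$, and the formula $\tau(i)=\min\{j:\sfp_j\tau(i)=1\}$ (with the convention $\tau(i)=0$ when this set is empty) recovers $\tau$ from the sequence $(\sfp_j\tau)_j$, so $\tau=\psi$.

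The main (mild) obstacle is the last step: upgrading $\sigma\varphi^{-1}\in\bigcap_j\Sigma_{\sfp_j\tau}$ to $\sigma\varphi^{-1}\in\Sigma_\tau$. Here I would use that indices $i<i'$ lie in the same level set of $f^\tau$ if and only if they lie in the same level set of every $f^{\sfp_j\tau}$: any separation corresponds to some $\tau(k)=m\geq 1$ with $k\in\{i,\dots,i'-1\}$, and then $\sfp_{m-1}\tau(k)=1$ with $m-1\in\{0,\dots,d\}$ thanks to $\tau\in T^d_n$. Consequently a permutation preserving the level sets of each $f^{\sfp_j\tau}$ also preserves those of $f^\tau$, giving $\sigma\varphi^{-1}\in\Sigma_\tau$ and completing the proof.
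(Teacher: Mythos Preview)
Your argument is correct and is exactly what the paper intends (its own proof is the single line ``follows immediately from \ref{p:PropertiesOfFST} and the definition''). Two minor remarks: your recovery formula is mis-stated, since $\sfp_j\tau(i)=1$ iff $j<\tau(i)$, so $\min\{j:\sfp_j\tau(i)=1\}$ is $0$ whenever $\tau(i)\geq 1$; the correct version is $\tau(i)=\min\{j:\sfp_j\tau(i)=0\}$ (convention $d+1$ if empty), or simply $\tau(i)=\sum_{j=0}^{d}\sfp_j\tau(i)$. Also, the last paragraph can be shortened: since $\sfp_0\tau(k)=0$ iff $\tau(k)=0$, the functions $f^{\sfp_0\tau}$ and $f^\tau$ have the same level sets, so already $\Sigma_{\sfp_0\tau}=\Sigma_\tau$ and the case $j=0$ alone gives $\sigma\varphi^{-1}\in\Sigma_\tau$.
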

\begin{proof}
	This follows immediately from \ref{p:PropertiesOfFST} and the definition.
\end{proof}

For $\mathbf{n}\in\Seq(n)$ define
\begin{equation}
	\Sigma_{\bn}:=\{\sigma\in\Sigma_n:\; f_\bn\circ \sigma=f_\bn\},
\end{equation}
\begin{equation}
	T^d_\bn:=\{\tau\in T^d_n:\; \forall_{k=1,\dots,l(\bn)-1}\; \tau(b_k(\bn))=d+1\}.
\end{equation}
Notice that $T^0_\bn=\{\tau\in T^0_n:\; f^\tau\preceq f_\bn\}$, and $T^d_{\bn}=\{\tau\in T^d_n:\; \sfp_{d}\tau\in T^0_\bn\}$.

\begin{prp}
	Let $\mathbf{n}\in\Seq(n)$, $\sigma\in\Sigma_n$, $\tau\in T^d_n$. Then $a[\sigma,\tau]$ is a simplex in $\cN O^\Pi_{\mathbf{n}}$ if and only if $\sigma\in \Sigma_\bn$ and $\tau\in T^d_\bn$.
\end{prp}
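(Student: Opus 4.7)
The plan is to reduce membership of $a[\sigma,\tau]$ in $\cN O_\bn$ to a single condition on the top vertex of the simplex, and then unpack that condition via Proposition~\ref{p:PropertiesOfFST}.

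First I would observe that $\sfp_k\tau(i)=1$ iff $\tau(i)>k$, so fewer indices are sent to $1$ as $k$ grows; hence $\sfp_0\tau \geq \sfp_1\tau \geq \dots \geq \sfp_d\tau$ pointwise. By \ref{p:PropertiesOfFST}(a) this gives
\begin{equation*}
	f^{\sfp_0\tau}\sigma \preceq f^{\sfp_1\tau}\sigma \preceq \dots \preceq f^{\sfp_d\tau}\sigma,
\end{equation*}
so the vertices already form a chain in $O_n$. Since $O_\bn=(O_n)_{\preceq f_\bn}$ is downward closed, all vertices lie in $O_\bn$ if and only if the maximal one does, and the problem reduces to the single inequality $f^{\sfp_d\tau}\sigma \preceq f_\bn$.

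Second, I would write $f_\bn = f^{\tau_\bn}\cdot\id$ and invoke \ref{p:PropertiesOfFST}(b) to split this into the conjunction $f^{\sfp_d\tau} \preceq f^{\tau_\bn}$ and $\sigma \in \Sigma_{\tau_\bn}$. The latter reads immediately as $\sigma \in \Sigma_\bn$, since by definition $\Sigma_{\tau_\bn} = \{\sigma:f^{\tau_\bn}\sigma = f^{\tau_\bn}\} = \{\sigma:f_\bn\sigma = f_\bn\} = \Sigma_\bn$.

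Finally I would apply \ref{p:PropertiesOfFST}(a) once more: the inequality $f^{\sfp_d\tau} \preceq f^{\tau_\bn}$ becomes $\sfp_d\tau(i) \geq \tau_\bn(i)$ for every $i \in \{1,\dots,n-1\}$. Here $\tau_\bn$ equals $1$ exactly at the cut positions $i \in \{b_1(\bn),\dots,b_{l(\bn)-1}(\bn)\}$ and vanishes elsewhere, whereas $\sfp_d\tau(i)=1$ iff $\tau(i)=d+1$; so the pointwise inequality is equivalent to $\tau(b_k(\bn)) = d+1$ for every $k = 1,\dots,l(\bn)-1$, which is precisely the defining condition for $\tau \in T^d_\bn$. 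There is no genuine obstacle here; the only care needed is in keeping straight the order-reversal between pointwise comparison of functions in $T^0_n$ and the induced $\preceq$-relation on their $f^\bullet$-images in $O_n$.
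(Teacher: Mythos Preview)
Your proof is correct and follows essentially the same route as the paper: reduce membership in $\cN O_\bn$ to the single condition $f^{\sfp_d\tau}\sigma \preceq f_\bn = f^{\tau_\bn}$, then split this via Proposition~\ref{p:PropertiesOfFST}(b) into $\sigma\in\Sigma_\bn$ and $f^{\sfp_d\tau}\preceq f^{\tau_\bn}$, the latter being equivalent to $\tau\in T^d_\bn$. You spell out a bit more detail (the downward-closure argument for the reduction and the pointwise unpacking via \ref{p:PropertiesOfFST}(a)), but the structure is identical.
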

\begin{proof}
	Clearly $a[\sigma,\tau]$ is a simplex of $\cN O_\bn$ if and only if $f^{\sfp_d\tau}\sigma\preceq f_\bn=f^{\tau_\bn}$. By \ref{p:PropertiesOfFST} this holds if and only if $\sigma\in\Sigma_{\bn}$ and $\sfp_d\tau \preceq \tau_\bn$.
\end{proof}

For $\sigma\in\Sigma_n$ let $\sgn(\sigma)\in\{\pm 1\}$ be the sign of permutation. For $\bn\in\Seq(n)$ and $\tau\in T^{n-l}_n$,  the image of $\tau$ contains $\{1,\dots,n-l\}$, and $\tau(b_j)=n-l$ for $j=1,\dots,l-1$. Then the restriction
\[
	\tau:\{1,\dots,n-1\}\setminus\{b_1,\dots,b_{l-1}\} \to \{1,\dots,n-l\}
\]	
is a bijection. Let $\varrho_{\mathbf{n}}: \{1,\dots,n-l\} \to \{1,\dots,n-1\}\setminus\{b_1,\dots,b_{l-1}\}$ be the (unique) increasing bijection. The composition $\tau\varrho_\bn$ is a permutation on $n-l$ letters, and we define \emph{the sign} of $\tau\in T^{n-l}_{\mathbf{n}}$ as
\begin{equation}\label{e:SignOfTau}
	\sgn_{\mathbf{n}}(\tau)=\sgn(\tau\varrho_\bn).
\end{equation}

Define a chain $g_\bn\in C_{n-l}(\cN O^\Pi_\bn)$ by
\begin{equation}
	g_{\mathbf{n}} = \sum_{\sigma\in\Sigma_{\mathbf{n}}}\sum_{\tau\in T^{n-l}_{\mathbf{n}}} \sgn(\sigma)\sgn_{\mathbf{n}}(\tau) a[\sigma,\tau].
\end{equation}
We will show that $g_\bn$ represents a fundamental class in $H_{n-l}(|O^\Pi_\bn|,|\partial O^\Pi_\bn|)$ (cf.\ \ref{e:PermIso}).

\begin{prp}\label{p:DeltaFormula}
	Let $\partial:C_{n-l}(\cN{O^\Pi_\bn})\to C_{n-l-1}(\cN{O^\Pi_\bn})$ be the differential of simplicial homological chain complex. Then
	\[
		\partial (g_\bn)=(-1)^{n-l}\sum_{\sigma\in\Sigma_{\bn}}\sum_{\tau\in T^{n-l(\bn)}_{\bn}} \sgn(\sigma)\sgn_{\bn}(\tau) a[\sigma,\sfs_{n-l}\tau].
	\]
	As a consequence, $g_\bn$ represents a generator of  $H^{n-l(\bn)}(\cN(O^\Pi_\bn), \cN(\partial O^\Pi_\bn))$.
\end{prp}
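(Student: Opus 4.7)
The plan is to compute $\partial g_\bn$ directly by expanding the simplicial boundary using $\partial_i a[\sigma,\tau]=a[\sigma,\sfs_i\tau]$, to obtain
\[
\partial g_\bn=\sum_{\sigma\in\Sigma_\bn}\sum_{\tau\in T^{n-l}_\bn}\sum_{i=0}^{n-l}(-1)^i\sgn(\sigma)\sgn_\bn(\tau)\,a[\sigma,\sfs_i\tau],
\]
and then to show by sign-reversing fixed-point-free involutions that every summand with $i<n-l$ cancels. A preliminary fiber count for top-dimensional $\tau\in T^{n-l}_\bn$ gives $\tau^{-1}(0)=\emptyset$, each $\tau^{-1}(j)$ a singleton for $1\le j\le n-l$, and $\tau^{-1}(n-l+1)=\{b_1,\dots,b_{l-1}\}$; the absence of singleton fibers at $i=n-l$ is exactly what prevents a cancellation there and so leaves that summand alone.

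For $1\le i\le n-l-1$, I would fix $\sigma$ and pair $\tau$ with the map $\tau'$ obtained by swapping the values at the singleton fibers $\tau^{-1}(i)$ and $\tau^{-1}(i+1)$. Merging via $\sfs_i$ forgets the swap, so $\sfs_i\tau=\sfs_i\tau'$, while $\tau'\varrho_\bn$ and $\tau\varrho_\bn$ differ by a transposition, giving $\sgn_\bn(\tau')=-\sgn_\bn(\tau)$. For $i=0$, since $\tau^{-1}(0)$ is empty, I would instead fix $\tau$ and pair $\sigma\in\Sigma_\bn$ with $(a,a+1)\sigma$, where $a=\tau^{-1}(1)$; the checks are that $(a,a+1)\in\Sigma_\bn$ (because $\tau(b_k)=n-l+1\neq 1$ forces $a\neq b_k$) and $(a,a+1)\in\Sigma_{\sfs_0\tau}=\Sigma_{\sfp_1\tau}$ (because $\sfp_1\tau(a)=0$ is immediate from $\tau(a)=1$). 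Hence $a[(a,a+1)\sigma,\sfs_0\tau]=a[\sigma,\sfs_0\tau]$ while $\sgn((a,a+1)\sigma)=-\sgn(\sigma)$, so the paired contributions cancel. The most delicate point is exactly this $i=0$ case, where the natural involution has to shift from $\tau$ onto $\sigma$.

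For the homology conclusion, I would verify that every simplex $a[\sigma,\sfs_{n-l}\tau]$ on the right-hand side lies in $\cN(\partial O_\bn)$: its maximal vertex $f^{\sfp_{n-l-1}(\sfs_{n-l}\tau)}\sigma$ has an extra jump at $\tau^{-1}(n-l)$ not present in $f_\bn$. Thus $\partial g_\bn$ vanishes in the relative chain complex, so $g_\bn$ is a relative cycle. Because $\Sigma_\tau=\{e\}$ for every top $\tau$ (since $\sfp_0\tau\equiv 1$ makes $f^{\sfp_0\tau}$ the identity), each top-dimensional simplex of $\cN(O_\bn)$ appears in $g_\bn$ with coefficient $\pm 1$; in view of the identification $H_{n-l}(|O_\bn|,|\partial O_\bn|)\cong\Z$ from (\ref{e:PermIso}), a relative cycle with unit coefficients on each top simplex must be a generator.
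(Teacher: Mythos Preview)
Your proof is correct and follows essentially the same approach as the paper: expand the boundary, split into $i=0$, $1\le i\le n-l-1$, $i=n-l$, cancel the first two ranges via the involutions $\sigma\mapsto \sft_{\tau^{-1}(1)}\sigma$ (your $(a,a+1)\sigma$) and $\tau\mapsto \sft_i\tau$ (your swap of singleton fibers), and observe that the surviving $i=n-l$ term lands in $\cN(\partial O_\bn)$ with $\pm 1$ coefficients. Your explicit check that $\Sigma_\tau=\{e\}$ for top-dimensional $\tau$ (ensuring no coefficient collapse) is a small refinement over the paper's terser ``coefficients are $\pm 1$'' remark.
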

\begin{proof}
	We have
	\begin{multline*}
		\partial(g_\bn)=\sum_{\sigma\in \Sigma_{\bn}}\sum_{\tau\in T^{n-l}_\bn} \sgn(\sigma)\sgn_{\bn}(\tau)\sum_{j=0}^{n-l}(-1)^j \partial_j(a[\sigma,\tau])
		=\\
		 \sum_{\sigma,\tau}\sum_{j=0}^{n-l} (-1)^j  \sgn(\sigma)\sgn_{\bn}(\tau) a[\sigma,\sfs_j \tau]=
		\sum_{\sigma,\tau} \sgn(\sigma)\sgn_{\bn}(\tau) a[\sigma,\sfs_0\tau]+\\ 
		\sum_{j=1}^{n-l-1} (-1)^j \sum_{\sigma,\tau} \sgn(\sigma)\sgn_{\bn}(\tau) a[\sigma,\sfs_j\tau] +(-1)^{n-l}\sum_{\sigma,\tau} \sgn(\sigma)\sgn_{\bn}(\tau) a[\sigma,\sfs_{n-l}\tau].
	\end{multline*}
	For the first summand, we have $a[\sigma,\sfs_0\tau]=a[\sft_{\tau^{-1}(1)}\sigma,\sfs_0\tau]$, since $\sft_{\tau^{-1}(1)}\sigma\sigma^{-1}=\sft_{\tau^{-1}(1)}\in \Sigma_{\sfs_0\tau}$, therefore the first summand equals
	\begin{equation*}
		\sum_{\tau}\sgn_\bn(\tau)\sum_{\sigma:\; \sigma(\tau^{-1}(1))<\sigma(\tau^{-1}(1)+1)}\sgn(\sigma) a[\sigma,\sfs_0\tau]+\sgn(\sft_{\tau^{-1}(1)}\sigma)a[\sft_{(\tau^{-1}(1))}\sigma,\sfs_0\tau]=0.
	\end{equation*}
	For the second summand, for $j\in\{1,\dots,n-l-1\}$, we have $\sfs_j\tau=\sfs_j\sft_j\tau$, therefore
	\begin{multline*}
		\sum_{\sigma,\tau} \sgn(\sigma)\sgn_{\bn}(\tau) a[\sigma,\sfs_j\tau]=\\
		\sum_{\sigma}\sgn(\sigma)\sum_{\tau:\; \tau^{-1}(j)<\tau^{-1}(j+1)} \sgn_\bn(\tau) a[\sigma,\sfs_j\tau] + \sgn_\bn(\sft_j\tau) a[\sigma,\sfs_j \sft_j \tau]=0.
	\end{multline*}
	Eventually,  only the third summand remains. The maximal element of $O^\Pi_\bn$, namely $f_\bn$, does not appear as the vertex of $a[\sigma,\sfs_{n-l} \tau]$.  Thus, $\partial(g_\bn)\in C_{n-l-1}(\cN(\partial O^\Pi_\bn))$ and $g_\bn$ is a cycle, regarded as an element of $C_{n-l}(\cN(O^\Pi_\bn), \cN(\partial O^\Pi_\bn))$. Since the coefficients of $g_\bn$ on all simplices are $\pm 1$, it represents the generator in the homology group.
\end{proof}

Let $\mathbf{n}\in\Seq(n)$. For $k\in\{1,\dots,l\}$, $r\in\{1,\dots,n_k-1\}$ denote
\begin{equation}
	\bn[k,r]:=(n_1,\dots,n_{k-1},r,n_k-r,n_{k+1},\dots,n_l)\in\Seq(n).
\end{equation}

\begin{lem}\label{l:Sgn}
	For each $\tau\in T^{n-l}_{\mathbf{n}}$, there exists a unique pair of integers $k\in\{1,\dots,l\}$, $r\in\{1,\dots,n_k-1\}$ such that  $\tau(b_{k-1}+r)=n-l$, i.e., there is a bijection
\[
	T^{n-l}_\bn\ni \tau \mapsto \sfs_{n-l}\tau\in \coprod_{k=1}^{l} \coprod_{r=1}^{n_k-1} T^{n-l-1}_{\bn[k,r]}.
\]	
	Furthermore, if $\sfs_{n-l}\tau\in T^{n-l-1}_{\bn[k,r]}$, then
	\[
		\sgn_{\bn[k,r]}(\sfs_{n-l}\tau)=(-1)^{n-l-b_{k-1}-r+k-1}\sgn_{\bn}(\tau).
	\]
\end{lem}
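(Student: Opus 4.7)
The statement has three parts: (i) uniqueness of $(k,r)$, (ii) bijectivity of $\tau\mapsto \sfs_{n-l}\tau$, and (iii) the sign identity. I would handle (i) and (ii) via a pigeonhole-style counting, and save (iii) for a careful inversion count.

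First, the definition of $T^{n-l}_\bn$ is quite rigid once one counts: $\tau\colon\{1,\dots,n-1\}\to\{0,\dots,n-l+1\}$ has $n-1$ domain elements, among which the $l-1$ positions $b_1,\dots,b_{l-1}$ are forced to take value $n-l+1$. The remaining $n-l$ positions must surject onto $\{1,\dots,n-l\}$; by cardinality, $\tau$ restricted to $\{1,\dots,n-1\}\setminus\{b_1,\dots,b_{l-1}\}$ is a bijection onto $\{1,\dots,n-l\}$, $\tau$ never takes value $0$, and $\tau^{-1}(n-l+1)=\{b_1,\dots,b_{l-1}\}$ exactly. In particular $\tau$ hits $n-l$ at a single position, which must be of the form $b_{k-1}+r$ for unique $k\in\{1,\dots,l\}$ and $r\in\{1,\dots,n_k-1\}$ (the constraint $r<n_k$ is automatic since $b_{k-1}+r$ cannot equal $b_k$).

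For the bijection: the $b$-values of $\bn[k,r]$ are precisely $\{b_1,\dots,b_{k-1},\ b_{k-1}+r,\ b_k,\dots,b_{l-1}\}$, a set of size $l$; and $\sfs_{n-l}$ collapses both $n-l$ and $n-l+1$ to $n-l$. A direct check confirms $\sfs_{n-l}\tau\in T^{n-l-1}_{\bn[k,r]}$. Conversely, given $(k,r)$ and $\tau'\in T^{n-l-1}_{\bn[k,r]}$, the same pigeonhole argument applied to $\tau'$ forces $(\tau')^{-1}(n-l)$ to equal exactly the $l$ positions above. Thus the only lift $\tau$ of $\tau'$ with $\tau\in T^{n-l}_\bn$ and $\tau(b_{k-1}+r)=n-l$ is the one obtained by agreeing with $\tau'$ on the complement of $\{b_1,\dots,b_{l-1}\}$ and setting $\tau(b_j)=n-l+1$ for $j=1,\dots,l-1$. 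This gives a two-sided inverse and hence the claimed bijection.

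For the sign, set $\pi:=\tau\varrho_\bn$, a permutation of $\{1,\dots,n-l\}$, and let $i_0\in\{1,\dots,n-l\}$ be the rank of $b_{k-1}+r$ inside $\{1,\dots,n-1\}\setminus\{b_1,\dots,b_{l-1}\}$. Since $b_1,\dots,b_{k-1}$ are precisely the excluded indices below $b_{k-1}+r$, one counts $i_0=b_{k-1}+r-(k-1)$, and by construction $\pi(i_0)=n-l$. The map $\varrho_{\bn[k,r]}$ is exactly the restriction of $\varrho_\bn$ that omits the value $b_{k-1}+r$, so $\varrho_{\bn[k,r]}=\varrho_\bn\circ j_0$, where $j_0\colon\{1,\dots,n-l-1\}\to\{1,\dots,n-l\}$ is the increasing injection that avoids $i_0$. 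Hence $\sfs_{n-l}\tau\,\varrho_{\bn[k,r]}=\sfs_{n-l}\pi j_0=\pi j_0$ (the outer $\sfs_{n-l}$ is the identity on the image, which misses $n-l$), i.e.\ $\pi$ with its maximum entry removed. Counting inversions, the entry at position $i_0$ contributes exactly $n-l-i_0$ inversions --- one with each later position, since $n-l$ is the maximum --- and no earlier ones, so $\sgn(\pi)=(-1)^{n-l-i_0}\sgn(\pi j_0)$. Substituting $i_0=b_{k-1}+r-k+1$ gives the claim. The only real obstacle is the inversion count and the identification $\varrho_{\bn[k,r]}=\varrho_\bn\circ j_0$; the remainder is bookkeeping.
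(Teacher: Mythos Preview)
Your proof is correct and follows essentially the same route as the paper. The paper's $v$ is your $i_0=b_{k-1}+r-(k-1)$, and where you count inversions of the maximal entry to get the factor $(-1)^{n-l-i_0}$, the paper obtains the same factor by writing $\tau\varrho_\bn = (\sfs_{n-l}\tau\,\varrho_{\bn[k,r]})\circ \sft^{n-l}_v$ on $\{1,\dots,n-l-1\}$ (with value $n-l$ at $n-l$) and reading off $\sgn(\sft^{n-l}_v)=(-1)^{n-l-v}$; your identification $\varrho_{\bn[k,r]}=\varrho_\bn\circ j_0$ is exactly the content of the paper's piecewise formula for $\varrho_\bn$.
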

\begin{proof}
	The existence of $k$ and $r$, as well as the fact that $\sfs_{n-l}\tau\in T^{n-l-1}_{\bn[k,r]}$, follows immediately from the definitions. It remains to prove that the signs of permutations $\sfs_{n-l}\tau\varrho_{\bn[k,r]}\in\Sigma_{n-l-1}$ and $\tau\varrho_\bn\in\Sigma_{n-l}$ (cf.\ \ref{e:SignOfTau}) differ by $(-1)^{n-l-b_{k-1}-r+k-1}$. Denote $v=b_{k-1}-(k-1)+r$; clearly
	\[
		\varrho_{\bn}(j)=\begin{cases}
			\varrho_{\bn[k,r]}(j) & \text{for $j\in\{1,\dots,v-1\}$},\\
			b_{k-1}+r & \text{for $j=v$,}\\
			\varrho_{\bn[k,r]}(j-1) & \text{for $j\in\{v+1,\dots,n-l\}$.}
		\end{cases}
	\]
	since $\im(\tau\varrho_{\bn[k,r]})\subseteq \{1,\dots,n-l-1\}$, we obtain
	\[
		\tau\varrho_{\bn}(j)=
		\begin{cases}
			\sfs_{n-l}\tau\varrho_{\bn[k,r]}(j) & \text{for $j\in\{1,\dots,v-1\}$}\\
			\tau(b_{k-1}+r)=n-l & \text{for $j=v$}\\
			\sfs_{n-l}\tau\varrho_{\bn[k,r]}(j-1) & \text{for $j\in\{v+1,\dots,n-l\}$}.\\
		\end{cases}
	\]
	Then
	\[
		\tau\varrho_{\bn}(j)=\begin{cases}
			\sfs_{n-l}\tau\varrho_{\bn[k,r]}\sft^{n-l}_v(j) & \text{for $j\in\{1,\dots,n-l-1\}$}\\
			n-l & \text{for $j=n-l$,}
		\end{cases}
	\]
	The permutations $\tau\varrho_\bn\in\Sigma_{n-l}$ and $\sfs_{n-l}\tau\varrho_{\bn[k,r]}\sft^{n-l}_v\in\Sigma_{n-l-1}$ have equal signs. As a consequence,
	\begin{multline*}
		\sgn_\bn(\tau)=\sgn(\tau\varrho_\bn)=\sgn(\sfs_{n-l}\tau\varrho_{\bn[k,r]}\sft^{n-l}_v) 
		=(-1)^{n-l-v}\sgn(\sfs_{n-l}\tau\varrho_{\bn[k,r]})=\\
		(-1)^{n-l-v}\sgn_{\bn[k,r]}(\sfs_{n-l}\tau)=(-1)^{n-l-b_{k-1}-r+k-1}\sgn_{\bn[k,r]}(\sfs_{n-l}\tau).\qedhere
	\end{multline*}
\end{proof}

Fix $\bn\in\Seq(n)$. For $k\in\{1,\dots,l\}$, $r\in\{1,\dots,n_k-1\}$, let
\begin{equation}
	S_\bn(k,r):=\{A\subseteq f_\bn^{-1}(k)=\{b_{k-1}+1,\dots,b_k\}:\; |A|=r \}.
\end{equation}

For $A\in S_\bn(k,r)$, let $\xi_{k,A}\in\Sigma_\bn$ be the permutation determined  by the following conditions:
\begin{itemize}
	\item{ $\xi_{k,A}(i)=i$ for $f_\bn(i)\neq k$,}
	\item{ $\xi_{k,A}|_A:A\to \{b_{k-1}+1, \dots, b_{k-1}+r\}$ be strictly increasing,}	
	\item{ $\xi_{k,A}|_{f^{-1}_\bn(k)\setminus A}:{f^{-1}_\bn(k)\setminus A}\to \{b_{k-1}+r+1, \dots, b_k\}$ be strictly increasing.}	
\end{itemize}

\begin{prp}\label{p:SignOfA}
	Assume that $\bn\in\Seq(n)$, $k\in\{1,\dots,l\}$, $r\in\{1,\dots,b_k-1\}$ and $A\in S_\bn(k,r)$. Then $\sgn(\xi_{k,A})=\sgn(A-b_{k-1})$, where $A-b_{k-1}:=\{i-b_k:\; i\in A \}$.
\end{prp}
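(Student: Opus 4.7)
The plan is to reduce to a standard inversion count. Since $\xi_{k,A}$ fixes every element outside $f_\bn^{-1}(k)=\{b_{k-1}+1,\dots,b_k\}$, its sign equals the sign of its restriction to that block, which, after shifting indices by $-b_{k-1}$, becomes the sign of the permutation $\xi\in\Sigma_{n_k}$ that sends $B:=A-b_{k-1}$ bijectively and order-preservingly onto $\{1,\dots,r\}$ and $\{1,\dots,n_k\}\setminus B$ order-preservingly onto $\{r+1,\dots,n_k\}$.

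Next I would count inversions of $\xi$. Because $\xi$ is increasing on $B$ and on its complement separately, a pair $i<j$ is an inversion precisely when $i\notin B$ and $j\in B$ (in the other three cases $\xi(i)<\xi(j)$). Writing $B=\{\beta_1<\beta_2<\cdots<\beta_r\}$, for each $s$ the number of non-elements of $B$ below $\beta_s$ equals $\beta_s-s$, so
\[
    \#\{\text{inversions of }\xi\}=\sum_{s=1}^{r}(\beta_s-s)=\sum_{i\in B}i-\sum_{i=1}^{r}i.
\]

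Therefore $\sgn(\xi_{k,A})=\sgn(\xi)=(-1)^{\sum_{i\in B}i-\sum_{i=1}^{r}i}$, which, by the very definition of $\sgn$ given in Theorem~\ref{t:Diff}, equals $\sgn(B)=\sgn(A-b_{k-1})$. The only step requiring any care is verifying that the three non-trivial configurations $(i,j\in B)$, $(i,j\notin B)$, $(i\in B,\,j\notin B)$ produce no inversions; this is immediate from the order-preservation on each block, so I expect no real obstacle in the argument.
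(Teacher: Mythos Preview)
Your argument is correct. The paper's proof reaches the same exponent but by a different standard device: it writes $\xi_{k,A}$ explicitly as a product of cycles of adjacent transpositions,
\[
    \xi_{k,A}= \sft^{a_r}_{b_{k-1}+r}\cdots \sft^{a_2}_{b_{k-1}+2}\,\sft^{a_1}_{b_{k-1}+1},
\]
where $A=\{a_1<\dots<a_r\}$ and each factor $\sft^{a_j}_{b_{k-1}+j}$ contributes $a_j-(b_{k-1}+j)$ adjacent transpositions; summing gives the same exponent $\sum_j (a_j-b_{k-1})-\sum_j j$. Your approach instead counts inversions directly, which is arguably cleaner since it avoids verifying the product decomposition and instead uses only the defining monotonicity on the two blocks. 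The two computations are of course equivalent (the inversion number equals the minimal word length in adjacent transpositions), so neither gains generality over the other; yours is just a more self-contained presentation of the same count.
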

\begin{proof}
	Assume that $A=(a_1<\dots<a_r)$. We have
	\[
		\xi_{k,A}= \sft^{a_r}_{b_{k-1}+r} \dots \sft^{a_2}_{b_{k-1}+2}\sft^{a_1}_{b_{k-1}+1},
	\]
	then $\sgn(\xi_{k,A})$ equals $-1$ to the power
	\[
		\sum_{j=1}^r a_j - \sum_{j=1}^r (b_{k-1}+j)=\sum_{j=1}^{r} (a_j-b_{k-1})-\sum_{j=1}^r j,
	\] 
	which coincides with the definition given in the introduction.
\end{proof}

\begin{lem}\label{l:Strata}
	For every $\bn\in\Seq(n)$, $k\in\{1,\dots,l\}$, $r\in\{1,\dots,n_k-1\}$ the function
	\[
		S_\bn(k,r)\times \Sigma_{\bn[k,r]}\ni (A,\omega) \mapsto \omega\xi_{k,A}\in \Sigma_{\bn}
	\]
	is a bijection.
\end{lem}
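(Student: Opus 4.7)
The plan is to recognise both sides as having Young subgroup interpretations, and then to write down an explicit inverse of the given map. Observe that $\Sigma_\bn$ is the Young subgroup of $\Sigma_n$ attached to the composition $\bn$, namely $\Sigma_\bn\cong\Sigma_{n_1}\times\cdots\times\Sigma_{n_l}$ (permutations preserving each block $f_\bn^{-1}(i)$), and that $\Sigma_{\bn[k,r]}$ is the Young subgroup for the finer composition $\bn[k,r]$, so $\Sigma_{\bn[k,r]}\subseteq\Sigma_\bn$. From the three defining conditions one reads off that $\xi_{k,A}\in\Sigma_\bn$ (it shuffles only the $k$th block), and therefore the map $(A,\omega)\mapsto \omega\xi_{k,A}$ indeed lands in $\Sigma_\bn$. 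A cardinality count corroborates the expected bijection: $|S_\bn(k,r)\times \Sigma_{\bn[k,r]}|=\binom{n_k}{r}\cdot r!(n_k-r)!\prod_{i\neq k}n_i! = \prod_{i=1}^l n_i! = |\Sigma_\bn|$.

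Next I would construct the inverse. For $\sigma\in\Sigma_\bn$, set
\[
A(\sigma):=\sigma^{-1}\bigl(\{b_{k-1}+1,\dots,b_{k-1}+r\}\bigr).
\]
Because $\sigma$ preserves the block $f_\bn^{-1}(k)=\{b_{k-1}+1,\dots,b_k\}$, the set $A(\sigma)$ is a subset of $f_\bn^{-1}(k)$ of size $r$, hence lies in $S_\bn(k,r)$. Define $\omega(\sigma):=\sigma\xi_{k,A(\sigma)}^{-1}$. The central verification is that $\omega(\sigma)\in\Sigma_{\bn[k,r]}$, i.e.\ that it preserves every block of $\bn[k,r]$. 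On the blocks other than the two produced by splitting $f_\bn^{-1}(k)$ this is clear, since both $\sigma$ and $\xi_{k,A(\sigma)}^{-1}$ are the identity on those indices (respectively are block-preserving in $\Sigma_\bn$). For the two new blocks, $\xi_{k,A(\sigma)}^{-1}$ sends $\{b_{k-1}+1,\dots,b_{k-1}+r\}$ bijectively to $A(\sigma)$ and $\{b_{k-1}+r+1,\dots,b_k\}$ bijectively to $f_\bn^{-1}(k)\setminus A(\sigma)$; by the definition of $A(\sigma)$, postcomposition with $\sigma$ sends these images back to $\{b_{k-1}+1,\dots,b_{k-1}+r\}$ and to $\{b_{k-1}+r+1,\dots,b_k\}$ respectively. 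So $\omega(\sigma)\in\Sigma_{\bn[k,r]}$ as required.

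Finally I would check that the two compositions are the identity. Starting from $(A,\omega)$, compute
\[
A(\omega\xi_{k,A}) = \xi_{k,A}^{-1}\bigl(\omega^{-1}(\{b_{k-1}+1,\dots,b_{k-1}+r\})\bigr)=\xi_{k,A}^{-1}(\{b_{k-1}+1,\dots,b_{k-1}+r\})=A,
\]
using that $\omega\in\Sigma_{\bn[k,r]}$ fixes the block $\{b_{k-1}+1,\dots,b_{k-1}+r\}$ setwise; and then $\omega(\omega\xi_{k,A})=\omega\xi_{k,A}\xi_{k,A}^{-1}=\omega$. Conversely, $\omega(\sigma)\xi_{k,A(\sigma)}=\sigma$ by construction. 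The main obstacle is purely bookkeeping: one must trace how $\xi_{k,A}$ acts block by block to confirm that the correction $\sigma\mapsto \sigma\xi_{k,A(\sigma)}^{-1}$ lands in the smaller Young subgroup $\Sigma_{\bn[k,r]}$; everything else is automatic.
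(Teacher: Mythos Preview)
Your proof is correct and follows exactly the paper's approach: the paper also defines the inverse by $A=\sigma^{-1}(\{b_{k-1}+1,\dots,b_{k-1}+r\})$ and $\sigma\mapsto(A,\sigma\xi_{k,A}^{-1})$, merely asserting without details that $\sigma\xi_{k,A}^{-1}\in\Sigma_{\bn[k,r]}$. Your additional verification of this membership and of the two composites being the identity, together with the cardinality count, fills in exactly the details the paper omits.
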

\begin{proof}
	For arbitrary $\sigma\in\Sigma_\bn$, let $A=\sigma^{-1}(\{b_{k-1}+1,\dots,b_{k-1}+r\})$. Then $\sigma\xi_{k,A}^{-1}\in\Sigma_{\bn[k,r]}$, and the inverse function is given by $\sigma\mapsto (A,\sigma\xi_{k,A}^{-1})$.
\end{proof}

Every permutation $\xi\in\Sigma_\bn$ induces an automorphism $\sigma^*:O_\bn\to O_\bn$ by the formula $\sigma^*(f)=f\sigma$; notice that $\xi_* a[\sigma,\tau]=a[\sigma\xi,\tau]$. Define an inclusion $\iota_{k,A}$ as a composition $O^\Pi_{\bn[k,r]}\subseteq O^\Pi_\bn \xrightarrow{(\xi_{k,A})^*}O^\Pi_\bn$.

\begin{prp}\label{p:GnDifferential}
	For $\bn\in\Seq(n)$ we have
	\[
		\partial g_\bn=\sum_{k=1}^l \sum_{r=1}^{n_k-1}\sum_{A\in S_\bn(k,r)} (-1)^{k+r+b_{k-1}+1} \sgn(A-b_{k-1}) (\iota_{k,A})_*(g_{\bn[k,r]}).
	\]
\end{prp}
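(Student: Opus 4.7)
The plan is to start from the formula for $\partial g_\bn$ obtained in Proposition \ref{p:DeltaFormula}, namely
\[
	\partial g_\bn=(-1)^{n-l}\sum_{\sigma\in\Sigma_\bn}\sum_{\tau\in T^{n-l}_\bn}\sgn(\sigma)\sgn_\bn(\tau)\,a[\sigma,\sfs_{n-l}\tau],
\]
and re-index the double sum using the two bijections already established: Lemma \ref{l:Sgn} for the $\tau$-variable and Lemma \ref{l:Strata} for the $\sigma$-variable. Along the way I will use Proposition \ref{p:SignOfA} to convert $\sgn(\xi_{k,A})$ into the combinatorial factor $\sgn(A-b_{k-1})$.

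Step one: partition the $\tau$-sum. By Lemma \ref{l:Sgn}, the assignment $\tau\mapsto\tau':=\sfs_{n-l}\tau$ is a bijection from $T^{n-l}_\bn$ onto $\coprod_{k=1}^{l}\coprod_{r=1}^{n_k-1} T^{n-l-1}_{\bn[k,r]}$, and moreover
\[
	\sgn_\bn(\tau)=(-1)^{n-l-b_{k-1}-r+k-1}\sgn_{\bn[k,r]}(\tau')
\]
whenever $\tau'\in T^{n-l-1}_{\bn[k,r]}$. Substituting this into the formula and pulling the $(k,r)$-sums outside turns the expression into
\[
	\partial g_\bn=\sum_{k,r}(-1)^{n-l}(-1)^{n-l-b_{k-1}-r+k-1}\sum_{\sigma\in\Sigma_\bn}\sum_{\tau'\in T^{n-l-1}_{\bn[k,r]}}\sgn(\sigma)\sgn_{\bn[k,r]}(\tau')\,a[\sigma,\tau'].
\]
The product of the two signs simplifies, modulo the identity $(-1)^{-x}=(-1)^x$, to $(-1)^{k+r+b_{k-1}+1}$, which is exactly the coefficient appearing in the target formula.

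Step two: factorize the $\sigma$-sum. Lemma \ref{l:Strata} gives a bijection $\Sigma_\bn\cong S_\bn(k,r)\times\Sigma_{\bn[k,r]}$ sending $(A,\omega)$ to $\omega\xi_{k,A}$, and $\sgn(\omega\xi_{k,A})=\sgn(\omega)\sgn(\xi_{k,A})=\sgn(\omega)\sgn(A-b_{k-1})$ by Proposition \ref{p:SignOfA}. The key compatibility is
\[
	a[\omega\xi_{k,A},\tau']=(\iota_{k,A})_*\,a[\omega,\tau'],
\]
which is immediate from the definitions $\iota_{k,A}(f)=f\xi_{k,A}$ and $a[\sigma,\tau]=(f^{\sfp_0\tau}\sigma,\dots,f^{\sfp_d\tau}\sigma)$; note that on the right-hand side $a[\omega,\tau']$ is a bona fide simplex of $\cN O^\Pi_{\bn[k,r]}$ because $\omega\in\Sigma_{\bn[k,r]}$ and $\tau'\in T^{n-l-1}_{\bn[k,r]}$. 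Plugging these in, the inner double sum becomes
\[
	\sgn(A-b_{k-1})(\iota_{k,A})_*\!\sum_{\omega,\tau'}\sgn(\omega)\sgn_{\bn[k,r]}(\tau')\,a[\omega,\tau']=\sgn(A-b_{k-1})(\iota_{k,A})_*(g_{\bn[k,r]}).
\]

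Step three: assemble. Combining the two reindexings yields exactly
\[
	\partial g_\bn=\sum_{k=1}^{l}\sum_{r=1}^{n_k-1}\sum_{A\in S_\bn(k,r)}(-1)^{k+r+b_{k-1}+1}\sgn(A-b_{k-1})\,(\iota_{k,A})_*(g_{\bn[k,r]}),
\]
as claimed. The only subtle point, and thus the main place where I would be careful, is the sign bookkeeping: one must verify that the $(-1)^{n-l}$ coming from Proposition \ref{p:DeltaFormula} combines correctly with the $(-1)^{n-l-b_{k-1}-r+k-1}$ from Lemma \ref{l:Sgn} to produce $(-1)^{k+r+b_{k-1}+1}$, and that no additional sign is introduced by the identification $a[\omega\xi_{k,A},\tau']=(\iota_{k,A})_*\,a[\omega,\tau']$ (which it is not, since $(\iota_{k,A})_*$ acts by precomposition with $\xi_{k,A}$ vertex-wise and preserves the simplex orientation).
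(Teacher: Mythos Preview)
Your proof is correct and follows essentially the same approach as the paper: start from Proposition~\ref{p:DeltaFormula}, reindex the $\tau$-sum via Lemma~\ref{l:Sgn} and the $\sigma$-sum via Lemma~\ref{l:Strata} together with Proposition~\ref{p:SignOfA}, then recognize the resulting inner sum as $(\iota_{k,A})_*(g_{\bn[k,r]})$. Your explicit verification of the sign combination and of the identity $a[\omega\xi_{k,A},\tau']=(\iota_{k,A})_*\,a[\omega,\tau']$ is a welcome clarification of steps the paper leaves implicit.
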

\begin{proof}
	By \ref{p:DeltaFormula} we have
	\[
		\partial g_\bn=(-1)^{n-l}\sum_{\tau\in T^{n-l}_{\bn}}\sgn_{\bn}(\tau) \sum_{\sigma\in\Sigma_{\bn}} \sgn(\sigma) a[\sigma,\sfs_{n-l}\tau].
	\]
	Then, by Lemma \ref{l:Sgn}
	\[
		\partial g_\bn=\sum_{k=1}^l \sum_{r=1}^{n_k-1} \sum_{\psi\in T^{n-l-1}_{\bn[k,r]}} (-1)^{k+r+b_{k-1}+1} \sgn_{\bn[k,r]}(\psi) \sum_{\sigma\in\Sigma_{\bn}} \sgn(\sigma) a[\sigma,\psi].
	\]	
	Finally, using \ref{p:SignOfA} and \ref{l:Strata}, we obtain 
	\begin{multline*}
		\partial g_\bn=\sum_{k=1}^l \sum_{r=1}^{n_k-1} \sum_{\psi\in T^{n-l-1}_{\bn[k,r]}}  (-1)^{k+r+b_{k-1}+1} \sgn_{\bn[k,r]}(\psi) \sum_{\omega\in\Sigma_{\bn[k,r]}}\sum_{A\in S_\bn(k,r)} \sgn(\omega\xi_{k,A}) a[\omega\xi_{k,A},\psi]\\
		=\sum_{k=1}^l\sum_{r=1}^{n_k-1}\sum_{A\in S_\bn(k,r)}  (-1)^{k+r+b_{k-1}+1} \sgn(\xi_{k,A})\sum_{\psi\in T^{n-l-1}_{\bn[k,r]}}    \sum_{\omega\in\Sigma_{\bn[k,r]}} \sgn_{\bn[k,r]}(\psi)\sgn(\omega)  (\iota_{k,A})_*a[\omega,\psi]\\
		=\sum_{k=1}^l\sum_{r=1}^{n_k-1}\sum_{A\in S_\bn(k,r)} (-1)^{k+r+b_{k-1}+1} \sgn(A-b_{k-1}) (\iota_{k,A})_*(g_{\bn[k,r]}).\qedhere
	\end{multline*}
\end{proof}

\section{CW-decomposition} 
\label{s:CWDecomposition}

Throughout the whole Section, $K$ is a finite proper $\square$-set with height function $h$, and $v, w\in K[0]$ its vertices such that $h(v)=0$, $h(w)=n$. For $s\in \Z$, define a function
\[
	\sfu:\Z\ni i \mapsto \begin{cases}
		0 & \text{for $i>s$}\\
		1 & \text{for $i<s$}\\
		* & \text{for $i=s$.}
	\end{cases}
\]

\begin{prp}\label{p:IcMap}
	For every $c\in K[n]$, a map
	\[
		I_c:O_n\ni f \mapsto (d_{\sfu_1 f}(c),d_{\sfu_2 f}(c),\dots,d_{\sfu_{k(f)} f}(c))\in \Ch_{\leq (c)}(K),
	\]
	is an isomorphism of posets.
\end{prp}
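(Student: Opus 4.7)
I would identify $\Ch_{\leq (c)}(K)$ with the set of ordered partitions of $\{1,\dots,n\}$ into nonempty subsets, which is in canonical bijection with $O_n$ via $f\leftrightarrow(f^{-1}(1),\dots,f^{-1}(k(f)))$. Under this identification $I_c$ should become the identity, and $\preceq$ should match $\leq$. My proof will split into well-definedness, bijectivity, and order preservation.

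\textbf{Well-definedness.} For fixed $f\in O_n$ with $k=k(f)$, each cube $d_{\sfu_s f}(c)$ has positive dimension $|f^{-1}(s)|$, and I would use the composition formula \eqref{e:DfComposition} to verify
\[
d^1(d_{\sfu_s f}(c))=d_{\{i:f(i)>s\},\{i:f(i)\leq s\}}(c)=d^0(d_{\sfu_{s+1}f}(c)),
\]
together with $d^0(d_{\sfu_1 f}(c))=d^0(c)$ and $d^1(d_{\sfu_k f}(c))=d^1(c)$; this makes $I_c(f)$ an honest cube chain from $d^0(c)$ to $d^1(c)$. To check that $I_c(f)\leq(c)$ I would induct on $k(f)$: the base case $k(f)=1$ is immediate, since then $f$ is the constant function and $I_c(f)=(c)$; for $k(f)>1$ I would exhibit a one-step coarsening $f_+$ of $f$ (merging the values $1$ and $2$) together with a subset $A$ such that $I_c(f)=d_{1,A}(I_c(f_+))$, whence $I_c(f)\leq I_c(f_+)\leq(c)$ by the inductive hypothesis.

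\textbf{Bijection and order.} The main work is constructing an inverse of $I_c$. Given a chain $\bc=(c_1,\dots,c_l)\leq(c)$, each $c_s$ is a face of $c$, because the generating relations $d_{k,A}$ split faces of $c$ into two faces of $c$; by properness of $K$, each such $c_s$ admits a unique expression $c_s=d_{A_s,B_s}(c)$. Setting $C_s:=\{1,\dots,n\}\setminus(A_s\cup B_s)$, the boundary conditions $d^0(c_1)=d^0(c)$, $d^1(c_l)=d^1(c)$, and $d^1(c_s)=d^0(c_{s+1})$ combined with properness (applied to the common vertex $d^1(c_s)$) will force $B_1=\emptyset$, $A_l=\emptyset$, and $B_{s+1}=B_s\sqcup C_s$, $A_s=A_{s+1}\sqcup C_{s+1}$, so that $(C_1,\dots,C_l)$ is an ordered partition of $\{1,\dots,n\}$ into nonempty subsets; the $f\in O_n$ corresponding to this partition is then a two-sided inverse of $I_c$. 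For order preservation I would use that $\preceq$ on $O_n$ is generated by elementary coarsenings merging two adjacent values, while $\leq$ on $\Ch_{\leq(c)}(K)$ is generated by the relations $d_{k,A}(\bc)\leq\bc$; the computation from the well-definedness step matches these generators bijectively, giving both implications. The main technical obstacle is extracting $(A_s,B_s)$ uniquely from each $c_s$ and translating the matching conditions into a partition of $\{1,\dots,n\}$; once this is done, the rest is bookkeeping.
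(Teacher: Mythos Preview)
Your proposal is correct and follows essentially the same approach as the paper: both construct the inverse of $I_c$ by using properness (more precisely, non-self-linkedness) to extract from each $c_s$ in a chain $\mathbf{b}\leq(c)$ the unique pair $(A_s,B_s)$ with $c_s=d_{A_s,B_s}(c)$, and then read off the ordered partition $(C_1,\dots,C_l)$ from the boundary relations. The paper's proof is terser---it only records the increasing chain $\emptyset=B_1\subsetneq\bar A_1=B_2\subsetneq\cdots\subsetneq\bar A_l=\{1,\dots,n\}$ and declares the resulting $J_c$ to be the inverse---whereas you additionally spell out the well-definedness of $I_c$ (including the inductive check that $I_c(f)\leq(c)$) and the matching of generating relations for order preservation, which the paper leaves implicit.
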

\begin{proof}
	We will construct an inverse of $I_c$. For an arbitrary $\mathbf{b}=(b_1,\dots,b_l)\in \Ch_{\leq (c)}(K)$ and $k\in\{1,\dots,l\}$, there exists a unique, since $K$ is proper,  presentation $b_k=d_{A_k,B_k}(c)$ where $A_k$ and $B_k$ are disjoint subsets of $\{1,\dots,n\}$. For $X\subseteq \{1,\dots,n\}$ denote $\bar{X}=\{1,\dots,n\}\setminus X$. We have $B_1=\emptyset$ (since $d^0(b_1)=d^0(c)$), $A_l=\emptyset$ (since $d^1(b_l)=d^1(c)$), $B_{k+1}=\bar{A}_k$ (since $d^0(b_{k+1})=d^1(b_k)$), and $A_k\cup B_k\neq \{1,\dots,n\}$ (since $\dim(b_k)>0$). Therefore, we have a strictly increasing sequence
\[
	\emptyset=B_1\subsetneq \bar{A}_1=B_2\subsetneq \dots \subsetneq \bar{A}_{l-1}=B_l\subsetneq \bar{A}_{l}=\{1,\dots,n\},
\]
which defines a unique function $J_c(\mathbf{b}):\{1,\dots,n\}\to \{1,\dots,l\}$ such that $i\in \bar{A}_{f(i)}\setminus B_{f(i)}$. Clearly, $J_c$ is an inverse map of $I_c$.
\end{proof}

For $v,w\in K[0]$ and $\bc=(c_1,\dots,c_l)\in \Ch(K)_v^w$, define a map
\begin{equation}
	I_\bc:O_\bn\ni \;f\; \mapsto I_{c_1}(f^1)*\dots*I_{c_l}(f^l)\in \Ch_{\leq \bc}(K),
\end{equation}
where $f^i$ are defined as in (\ref{e:PreceqIso}).

\begin{prp}\label{p:SubposetIso}
	For every $\bc\in \Ch(K)_v^w$, $I_\bc$ is an isomorphism of posets.
\end{prp}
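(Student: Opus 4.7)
The plan is to reduce to Proposition \ref{p:IcMap} via the product decomposition of $O_\bn$ established in the Permutahedra section, namely the isomorphism $O_{\bn}\cong O_{n_1}\times\cdots\times O_{n_l}$ given by $f\mapsto (f^1,\dots,f^l)$ (see (\ref{e:PreceqIso})). Under this isomorphism $I_\bc$ factors as
\[
	O_\bn \;\cong\; \prod_{k=1}^l O_{n_k} \;\xrightarrow{\prod I_{c_k}}\; \prod_{k=1}^l \Ch_{\leq (c_k)}(K)_{v^\bc_{k-1}}^{v^\bc_k} \;\xrightarrow{*}\; \Ch_{\leq \bc}(K)_v^w,
\]
where the middle arrow is a product of isomorphisms by Proposition \ref{p:IcMap}, and the last arrow is the concatenation of cube chains. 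So the heart of the proof is to show that concatenation is a bijection and a poset isomorphism.

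First I would establish that the concatenation map is a bijection by describing its inverse explicitly. Given $\bc'\in\Ch_{\leq \bc}(K)_v^w$, I claim each transition vertex $v^\bc_k$ of $\bc$ appears as a transition vertex of $\bc'$, which gives a canonical decomposition $\bc'=\bc'_1*\cdots*\bc'_l$ with $\bc'_k\in \Ch_{\leq (c_k)}(K)_{v^\bc_{k-1}}^{v^\bc_k}$. To verify this, I would argue by induction on the number of $d_{k,A}$-operations separating $\bc'$ from $\bc$: the base case $\bc'=\bc$ is trivial, and a single operation $d_{k,A}$ only splits one cube $c_k$ into two, keeping all transition vertices $v^\bc_i$ intact and leaving the new pair of cubes inside $(c_k)$. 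Uniqueness of the decomposition follows because the $v^\bc_k$ are pairwise distinct (by the height function: $h(v^\bc_k)=b^\bc_k$), and so the position at which to split $\bc'$ is forced.

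Next, I would check that both $*$ and its inverse preserve the partial order. The forward direction is immediate: if $\bc'_k \leq (c_k)$ for all $k$, then any single $d_{j,A}$-step applied to one component translates to the corresponding $d_{j',A}$-step on the concatenation, showing $\bc'_1*\cdots*\bc'_l \leq \bc$ and, more generally, that componentwise $\leq$ implies $\leq$ on concatenations. For the reverse direction, if $\bc''\leq\bc'$ in $\Ch_{\leq \bc}(K)_v^w$ with decompositions $\bc''=\bc''_1*\cdots*\bc''_l$ and $\bc'=\bc'_1*\cdots*\bc'_l$, then since each elementary step $d_{j,A}$ acts inside a single cube and preserves all intermediate transition vertices $v^\bc_k$, the chain of relations from $\bc''$ to $\bc'$ restricts componentwise, giving $\bc''_k\leq \bc'_k$ for each $k$.

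The main obstacle will be the first step: verifying that every $\bc'\leq \bc$ has the transition vertices $v^\bc_k$ among its vertices, and that the splitting is unique. Everything else is then bookkeeping: the componentwise maps $I_{c_k}$ are isomorphisms by Proposition \ref{p:IcMap}, the product isomorphism $O_\bn\cong\prod_k O_{n_k}$ is order-preserving by construction (its inverse is described right before (\ref{e:PermIso})), and the concatenation is compatible with the decomposition $f\mapsto (f^1,\dots,f^l)$ directly from the definition of $I_\bc$. Combining these, $I_\bc$ is realized as a composition of poset isomorphisms and hence is itself one.
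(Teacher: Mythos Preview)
Your proposal is correct and follows essentially the same approach as the paper: the paper's proof also constructs the inverse $J_\bc$ by decomposing any $\mathbf{b}\leq\bc$ uniquely as $\mathbf{b}^1*\cdots*\mathbf{b}^l$ with $\mathbf{b}^k\in\Ch_{\leq(c_k)}(K)$ and then applying the inverses $J_{c_k}$ from Proposition~\ref{p:IcMap} componentwise. The paper states this in two sentences without justification, whereas you have supplied the details (existence of the splitting by induction on the number of elementary steps, uniqueness via the height function, and order preservation in both directions); so your argument is a fleshed-out version of the same proof rather than a different one.
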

\begin{proof}
	Every $\mathbf{b}\in \Ch_{\leq\bc}(K)$ has a unique presentation as $\mathbf{b}^1*\dots*\mathbf{b}^l$, where $\mathbf{b}^k\in \Ch_{\leq (c_k)}(K)$. Define 
	\[
		J_\bc({\mathbf{b}})=(J_{c_1}(\mathbf{b}^1),\dots,J_{c_l}(\mathbf{b}^l))\in O_{n_1}\times\dots\times O_{n_l}\simeq O_{\bn},
	\]
	where $J_{c_k}$ is the map from the proof of \ref{p:IcMap}; $J_\bc$ is the inverse of $I_\bc$.
\end{proof}

\begin{prp}\label{p:CWp}
	Theorem \ref{t:CW} holds for finite proper bi-pointed $\square$--sets with height function.
\end{prp}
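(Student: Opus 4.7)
The plan is to use Proposition \ref{p:SubposetIso} as the structural input that reduces everything to permutahedron combinatorics already established in Section on Permutahedra.

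For each cube chain $\bc\in\Ch^{=d}(K)_v^w$ of type $\bn=(n_1,\dots,n_l)$, the poset isomorphism $I_\bc:O_\bn\to\Ch_{\leq\bc}(K)$ sends the top element $f_\bn$ to $\bc$ itself, so it restricts to an isomorphism $\partial O_\bn\xrightarrow{\cong}\Ch_{<\bc}(K)$. Passing to nerves yields a homeomorphism of pairs
\[
	(|\Ch_{\leq\bc}(K)|,|\Ch_{<\bc}(K)|)\;\cong\;(|O_\bn|,|\partial O_\bn|),
\]
and (\ref{e:PermIso}) identifies the right-hand side with $(D^{n-l},S^{n-l-1})=(D^d,S^{d-1})$ since $d=\dim(\bc)=n_1+\dots+n_l-l$. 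This produces characteristic maps $\Phi_\bc:D^d\to|\Ch_{\leq\bc}(K)|\subseteq|\Ch(K)_v^w|$ that are \emph{homeomorphisms} onto their images, which yields the regularity of the CW-structure for free.

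Next I would verify that these closed cells assemble into a CW-complex. The crucial monotonicity property is that $\bc'<\bc$ implies $\dim(\bc')<\dim(\bc)$: indeed, each elementary replacement $d_{k,A}$ leaves the total $n_1+\dots+n_l$ unchanged while increasing $l$ by one, so $\dim$ strictly decreases along each covering relation. Consequently, for any simplex $\bc_0'<\bc_1'<\dots<\bc_k'$ of $\cN(\Ch(K)_v^w)$, its unique top element $\bc_k'$ determines the unique subposet $\Ch_{\leq\bc}(K)$ whose "open stratum" contains it, giving a set-theoretic partition
\[
	|\Ch(K)_v^w|\;=\;\bigsqcup_{\bc\in\Ch(K)_v^w}\bigl(|\Ch_{\leq\bc}(K)|\setminus|\Ch_{<\bc}(K)|\bigr).
\]
Each open $d$-cell $|\Ch_{\leq\bc}(K)|\setminus|\Ch_{<\bc}(K)|$ for $\bc\in\Ch^{=d}(K)_v^w$ has closure $|\Ch_{\leq\bc}(K)|\cong D^d$ and is attached along $|\Ch_{<\bc}(K)|\cong S^{d-1}$; the latter lies in the $(d-1)$-skeleton precisely because all $\bc'<\bc$ have strictly smaller dimension. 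Finiteness of $K$ guarantees finiteness of $\Ch(K)_v^w$, so the closure-finiteness and weak topology axioms are automatic.

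For the naturality/cellularity clause, I would observe that a $\square$-map $f:K\to K'$ preserves the dimension of each cube, hence $f_*(\bc)=(f(c_1),\dots,f(c_l))$ has the same type as $\bc$ and thus the same dimension. Because $f$ commutes with face maps, the elementary operations $d_{k,A}$ are $f_*$-equivariant, so $f_*$ is a poset map and restricts to $\Ch_{\leq\bc}(K)\to\Ch_{\leq f(\bc)}(K')$. On realizations this gives $f_*(|\Ch_{\leq\bc}(K)|)\subseteq|\Ch_{\leq f(\bc)}(K')|$, so the induced map sends each $d$-cell into a single $d$-cell of the target, proving cellularity.

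The only delicate point is the dimension strict-monotonicity under $<$; everything else is formal bookkeeping once Proposition \ref{p:SubposetIso} and the permutahedral homeomorphism (\ref{e:PermIso}) are in hand. Thus the proof reduces to writing out the four observations above, with essentially no additional computation.
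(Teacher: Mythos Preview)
Your proposal is correct and follows essentially the same approach as the paper: both use Proposition~\ref{p:SubposetIso} together with the permutahedral identification (\ref{e:PermIso}) to show each $(|\Ch_{\leq\bc}(K)|,|\Ch_{<\bc}(K)|)\cong(D^d,S^{d-1})$, and then conclude. The only difference is packaging: where you verify the CW-axioms by hand (partition into open cells, dimension strict-monotonicity, attachment in lower skeleta), the paper observes that $\Ch(K)_v^w$ augmented with a minimal element is a CW poset in the sense of Bj\"orner~\cite{Bj} and invokes \cite[3.1]{Bj} directly.
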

\begin{proof}
	By \ref{p:SubposetIso} and \ref{e:PermIso}, for every $\bc\in \Ch(K)$ the space $|\Ch_{<\bc}(K)_v^w|$ is homeomorphic to $S^{\dim(\bc)-1}$. Therefore, $\Ch(K)_v^w$, augmented with a minimal element $\emptyset$, is a CW poset in the sense of \cite{Bj}. Thus, by \cite[3.1]{Bj}, $|\Ch(K)_v^w|$ is a regular CW--complex with cells
\[
	A_\bc:= |\Ch_{\leq \bc}(K)_v^w|\setminus |\Ch_{< \bc}(K)_v^w|
\]
for $\bc\in \Ch(K)_v^w$. Every $\square$--map $f:K\to K'$ induces a morphism of posets $\Ch(K)_v^w\to \Ch(K')_{f(v)}^{f(w)}$, which maps $\Ch_{\leq\bc}(K)_v^w$ into $\Ch_{\leq f(\bc)}(K)_{f(v)}^{f(w)}$; therefore, $f_*:|\Ch(K)_v^w|\to|\Ch(K')_{f(v)}^{f(w)}|$ is cellular.
\end{proof}

Let $\Ch^{\leq n}(K)_v^w\subseteq \Ch(K)_v^w$ be the subset of cube chains having dimension less of equal $n$; clearly $|\Ch^{\leq n}(K)_v^w|$ is the $n$--skeleton of $|\Ch(K)_v^w|$. For a chain $\bc\in \Ch(K)_v^w$ of type $\bn$ and dimension $n$ let
\begin{equation}
	g_\bc := (I_\bc)_*(g_\bn)\in C_{n}(|\Ch^{\leq n}(K)_v^w|, |\Ch^{\leq n-1}(K)_v^w|).
\end{equation}
The group $H_n(|\Ch^{\leq n}(K)_v^w|, |\Ch^{\leq n-1}(K)_v^w|)$ is a free $\Z$-module generated by (simplicial, homological) chains $g_\bc$ for $\bc\in \Ch^{=n}(K)_v^w$.

\begin{prp}\label{p:CubeChainFunctoriality}
	Assume that $\bn\in\Seq(n)$, $k\in\{1,\dots,l=l(\bn)\}$, $r\in\{1,\dots,n_k-1\}$, and $A\in S_\bn(k,r)$. Then the diagram
\[
	\begin{diagram}
		\node{O_{\mathbf{n}[k,r]}}
			\arrow{e,t}{I_{d_{k,(A-b_{k-1})}(\mathbf{c})}}
			\arrow{s,l}{\iota_{k,A}}
		\node{\Ch(K)}
			\arrow{s,l}{=}
	\\
		\node{O_{\mathbf{n}}}
			\arrow{e,t}{I_{\mathbf{c}}}
		\node{\Ch(K)}
	\end{diagram}
\]
commutes.
\end{prp}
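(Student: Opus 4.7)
The plan is to fix an arbitrary $f \in O_{\bn[k,r]}$, set $A' := A - b_{k-1} \subseteq \{1,\dots,n_k\}$ and $g := f\xi_{k,A} \in O_\bn$, and verify that $I_\bc(g) = I_{d_{k,A'}(\bc)}(f)$ as cube chains in $K$. Both sides factor as concatenations according to the cubes of $\bc$ (respectively of $d_{k,A'}(\bc)$), so the argument reduces to a blockwise comparison.

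The first step is the routine observation that $\xi_{k,A}$ is the identity outside the block $f_\bn^{-1}(k)$, while the inclusion $O_{\bn[k,r]} \subseteq O_\bn$ identifies block $i < k$ of $\bn[k,r]$ with block $i$ of $\bn$ and block $i > k+1$ of $\bn[k,r]$ with block $i-1$ of $\bn$. This matches the shift in the cube indexing of $d_{k,A'}(\bc)$ away from position $k$, so the contributions from blocks $i \neq k$ (in $\bn$) agree on both sides after a direct comparison of the block restrictions of $g$ and of $f$.

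The main computation is the identity on block $k$, namely
\[
I_{c_k}(g^k) \;=\; I_{d^0_{\overline{A'}}(c_k)}(f^k) \,*\, I_{d^1_{A'}(c_k)}(f^{k+1}),
\]
where $g^k \in O_{n_k}$ is the $k$-th block restriction of $g$ in $\bn$, and $f^k \in O_r$, $f^{k+1} \in O_{n_k - r}$ are the $k$-th and $(k+1)$-th block restrictions of $f$ in $\bn[k,r]$. To see this, one uses that $\xi_{k,A}$ sends $A$ increasingly onto $\{b_{k-1}+1,\dots,b_{k-1}+r\}$ and $f_\bn^{-1}(k) \setminus A$ increasingly onto $\{b_{k-1}+r+1,\dots,b_k\}$. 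Combined with $f \preceq f_{\bn[k,r]}$, this forces the values of $g$ on coordinates in $A$ to be strictly smaller than those on coordinates in $f_\bn^{-1}(k) \setminus A$. Thus, for each $j$ in the image of $g^k$, the cube $d_{\sfu_j g^k}(c_k)$ either sends every coordinate in $\overline{A'}$ to $0$ (for $j$ in the lower range), in which case it factors through $d^0_{\overline{A'}}(c_k)$, or sends every coordinate in $A'$ to $1$ (for $j$ in the upper range), in which case it factors through $d^1_{A'}(c_k)$. Unwinding these factorizations via the pre-cubical relation \eqref{e:DfComposition}, together with the increasing bijections $A' \to \{1,\dots,r\}$ and $\overline{A'} \to \{1,\dots,n_k - r\}$ induced by $\xi_{k,A}$, will identify the two halves of $I_{c_k}(g^k)$ with $I_{d^0_{\overline{A'}}(c_k)}(f^k)$ and $I_{d^1_{A'}(c_k)}(f^{k+1})$ respectively.

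The principal obstacle is purely notational: one must coordinate the reindexing conventions used in the block restrictions $f \mapsto f^i$, in the poset inclusion $O_{\bn[k,r]} \subseteq O_\bn$, and in the increasing bijections underlying $\xi_{k,A}$. Since $K$ is proper, every cube in a cube chain is determined by its extreme vertices, so once endpoints are matched, equality of cube chains follows.
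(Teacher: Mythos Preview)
Your proposal is correct and follows essentially the same approach as the paper. The paper reduces at once to the single-cube case $\bn=(n)$, $\bc=(c)$, $k=1$, which is exactly your ``block $k$'' computation; both arguments then finish by invoking the increasing bijection $\xi_{k,A}|_A:A\to\{1,\dots,r\}$ (and its complement) together with the composition formula \eqref{e:DfComposition} for iterated face maps to identify $d_{\sfu_s f\xi}(c)$ with $d_{\sfu_s f^1}d^0_{\bar A}(c)$ for small $s$ and with $d_{\sfu_{s-m_1} f^2}d^1_A(c)$ for large $s$.
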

\begin{proof}
	It is sufficient to prove this in the case when $\bn=(n)$, $\bc=(c)$ and $k=1$. Let $f:\{1,\dots,n\}\to \{1,\dots,m\}$ be an element of $O_{\bn[k,r]}=O_{(r,n-r)}$. There exists a presentation $m=m_1+m_2$ such that $f(i)\leq m_1$ for $i\in\{1,\dots,r\}$ and $f(i)>m_1$ for $i\in\{r+1,\dots,n\}$. Furthermore,
	\[f^1:\{1,\dots,r\}\to \{1,\dots,m_1\}\] 
	is a restriction of $f$, 	and $f_2$ is the composition
	\[
		\{1,\dots,n-r\}\xrightarrow{i\mapsto i+r} \{r+1,\dots,n\}\xrightarrow{i\mapsto f(i)}\{m_1+1,\dots,m_2\}\xrightarrow{i\mapsto i-m_1}\{1,\dots,m_2\}.
	\]
	We will write $\xi$ instead of $\xi_{1,A}$. We have
	\[
		I_{(c)}(\iota_{1,A}(f))=I_{c}(f\xi)=(d_{\sfu_1f\xi}(c),\dots,d_{\sfu_mf\xi}(c))
	\]
	and
	\begin{multline*}
		I_{d_{1,A}((c))}(f)=I_{(d^0_{\bar{A}}(c),d^1_A(c))}(f)= I_{d^0_{\bar{A}}(c)}(f^1)*I_{d^1_{A}(c)}(f^2)=\\
		(d_{\sfu_1f^1} d^0_{\bar{A}}(c),\dots,d_{\sfu_{m_1}f^1} d^0_{\bar{A}}(c),
		d_{\sfu_{1} f^2} d^1_{A}(c),\dots,d_{\sfu_{m_2}f^2} d^1_{A}(c))
	\end{multline*}
	Since $\xi|_A$ is an increasing bijection $A\to\{1,\dots,r\}$, for $s\in\{1,\dots,r\}$ we have (cf.\ \ref{e:DfComposition}) $d_{\sfu_sf^1}d^0_{\bar{A}}(c)=d_g$, where $g|_{\bar{A}}=0=\sfu_sf\xi$, and $g|_A=f^1\xi=f\xi$. Hence, $g=\sfu_sf\xi$. The similar argument applies for $s\in\{r+1,\dots,n\}$.
\end{proof}

\begin{prp}\label{p:GcDiff}
	Let $\bc\in\Ch(K)_v^w$ be a cube chain of type $\bn$. Then
	\[
		\partial(g_\bc)=\sum_{k=1}^{l(\bn)} \sum_{r=1}^{n_k-1} \sum_{A\in S_\bn(k,r)} (-1)^{k+r+b_{k-1}+1}\sgn(A-b_{k-1}) g_{d_{k,A-b_{k-1}}(\bc)}
	\]
\end{prp}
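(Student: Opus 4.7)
The plan is to reduce the statement to Proposition \ref{p:GnDifferential} via the poset isomorphism $I_\bc: O_\bn \to \Ch_{\leq \bc}(K)_v^w$ of Proposition \ref{p:SubposetIso}, exploiting the definition $g_\bc = (I_\bc)_*(g_\bn)$ together with the compatibility established in Proposition \ref{p:CubeChainFunctoriality}.

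First I would observe that $I_\bc$, being an isomorphism of posets, induces a simplicial map $\cN O_\bn \to \cN \Ch_{\leq \bc}(K)_v^w$, hence a chain map between the associated simplicial chain complexes. Consequently the induced map $(I_\bc)_*$ on simplicial chains commutes with the boundary $\partial$:
\[
\partial(g_\bc) = \partial((I_\bc)_*(g_\bn)) = (I_\bc)_*(\partial g_\bn).
\]

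Next I would substitute the formula of Proposition \ref{p:GnDifferential} to obtain
\[
\partial(g_\bc)=\sum_{k=1}^l \sum_{r=1}^{n_k-1}\sum_{A\in S_\bn(k,r)} (-1)^{k+r+b_{k-1}+1} \sgn(A-b_{k-1})\; (I_\bc)_*(\iota_{k,A})_*(g_{\bn[k,r]}).
\]
Then I would apply Proposition \ref{p:CubeChainFunctoriality}, which says exactly that $I_\bc \circ \iota_{k,A} = I_{d_{k,A-b_{k-1}}(\bc)}$ as maps of posets. Taking induced maps on simplicial chains, this yields
\[
(I_\bc)_*(\iota_{k,A})_*(g_{\bn[k,r]}) = (I_{d_{k,A-b_{k-1}}(\bc)})_*(g_{\bn[k,r]}).
\]
Finally I need to observe that $d_{k,A-b_{k-1}}(\bc)$ is a cube chain of type $\bn[k,r]$: indeed, it replaces $c_k$ (of dimension $n_k$) by the pair $(d^0_{\overline{A-b_{k-1}}}(c_k), d^1_{A-b_{k-1}}(c_k))$ of dimensions $r$ and $n_k-r$. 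Hence, by the very definition of $g_{d_{k,A-b_{k-1}}(\bc)}$, the right-hand side equals $g_{d_{k,A-b_{k-1}}(\bc)}$, and the desired formula follows.

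There is no substantive obstacle beyond bookkeeping here: the entire content of the statement lives in Propositions \ref{p:GnDifferential} and \ref{p:CubeChainFunctoriality}, and the proof amounts to splicing them together by the functoriality of $(-)_*$. The only point requiring minor care is verifying that the shift $A \mapsto A - b_{k-1}$ matches the convention under which $d_{k,B}(\bc)$ is defined for subsets $B \subseteq \{1,\dots,n_k\}$ (cf.\ \eqref{e:DifferentialOfCubeChains}), which is immediate since $A \in S_\bn(k,r) \subseteq \{b_{k-1}+1,\dots,b_k\}$ so $A - b_{k-1} \subseteq \{1,\dots,n_k\}$ with $|A-b_{k-1}|=r$.
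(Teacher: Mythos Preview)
Your proposal is correct and follows essentially the same approach as the paper: commute $\partial$ with $(I_\bc)_*$, invoke Proposition~\ref{p:GnDifferential}, then apply Proposition~\ref{p:CubeChainFunctoriality} to rewrite $(I_\bc)_*(\iota_{k,A})_*(g_{\bn[k,r]})$ as $g_{d_{k,A-b_{k-1}}(\bc)}$. Your additional remarks on the type of $d_{k,A-b_{k-1}}(\bc)$ and the index shift are accurate and make the bookkeeping explicit.
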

\begin{proof}
	We will write $A'=A-b_{k-1}$. By \ref{p:CubeChainFunctoriality} and \ref{p:GnDifferential}, we have
	\begin{multline*}
		\partial(g_\bc)=\partial (I_\bc(g_\bn))=I_\bc(\partial g_\bn)
		\buildrel{(\ref{p:GnDifferential})}\over= I_\bc\left(\sum_{k=1}^{l}\sum_{r=1}^{n_k-1}\sum_{A\in S_\bn(k,r)}(-1)^{k+r+b_{k-1}+1}\sgn(A') \iota_{k,A}(g_{\bn[k,r]})\right)\\
		=\sum_{k=1}^{l}\sum_{r=1}^{n_k-1}\sum_{A\in S_\bn(k,r)}(-1)^{k+r+b_{k-1}+1}\sgn(A') I_{d_{k,A'}(\bc)}(g_{\bn[k,r]})\\
		=\sum_{k=1}^{l}\sum_{r=1}^{n_k-1}\sum_{A\in S_\bn(k,r)}(-1)^{k+r+b_{k-1}+1}\sgn(A') g_{d_{k,A'}(\bc)}.\qedhere
	\end{multline*}
\end{proof}

\begin{prp}\label{p:Diff}
	Theorem \ref{t:Diff} holds for finite proper bi-pointed $\square$--sets with height function.
\end{prp}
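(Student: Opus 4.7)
The plan is to deduce this proposition as an essentially formal consequence of Proposition \ref{p:GcDiff}, which already computes $\partial(g_\bc)$ in a form very close to the required one; the remaining work is a reindexing of the triple sum together with a check that the sign conventions match.

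First I would record that Proposition \ref{p:GcDiff} sums over $A\in S_\bn(k,r)$, i.e.\ size-$r$ subsets of the block $\{b_{k-1}+1,\dots,b_k\}\subseteq\{1,\dots,n\}$, whereas Theorem \ref{t:Diff} sums over size-$r$ subsets of $\{1,\dots,n_k\}$. The obvious order-preserving shift
\[
	S_\bn(k,r)\ni A\mapsto A':=A-b_{k-1}\subseteq\{1,\dots,n_k\}
\]
is a bijection between the two index sets, and by construction $d_{k,A-b_{k-1}}(\bc)=d_{k,A'}(\bc)$, so the cube-chain labels $g_{d_{k,\cdot}(\bc)}$ agree after reindexing.

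Next I would verify that the coefficients match. Since $b_{k-1}=n_1+\dots+n_{k-1}$, the exponent $k+r+b_{k-1}+1$ appearing in Proposition \ref{p:GcDiff} coincides with $n_1+\dots+n_{k-1}+k+r+1$ as in Theorem \ref{t:Diff}. For the remaining factor, Proposition \ref{p:SignOfA} shows that $\sgn(A-b_{k-1})$, with $A=\{a_1<\dots<a_r\}$, equals $(-1)^{\sum_{j=1}^r(a_j-b_{k-1})-\sum_{j=1}^r j}$, which is precisely $(-1)^{\sum_{i\in A'} i-\sum_{i=1}^r i}$, matching the definition of $\sgn(A')$ given in the statement of Theorem \ref{t:Diff}.

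The main difficulty, such as it is, is purely notational: the substantive sign bookkeeping has already been carried out in Lemma \ref{l:Sgn}, Proposition \ref{p:SignOfA}, and Proposition \ref{p:CubeChainFunctoriality}, and consolidated in Proposition \ref{p:GcDiff}. Once the reindexing above is performed and the two sign conventions are identified, the formula of Theorem \ref{t:Diff} follows at once for every finite proper bi-pointed $\square$--set with height function.
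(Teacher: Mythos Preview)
Your proposal is correct and follows the same approach as the paper: deduce the result from Proposition \ref{p:GcDiff}. The paper's own proof is terser, simply citing Theorem \ref{t:CW} (to identify the cellular chain groups as free on the $g_\bc$) and Proposition \ref{p:GcDiff} (for the differential), while you spell out the reindexing $A\mapsto A-b_{k-1}$ and the matching of the sign conventions explicitly; this extra detail is valid and makes the translation between the two formulas transparent.
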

\begin{proof}
	By \ref{t:CW}, $H_n(|\Ch^{\leq m}(K)_v^w|, |\Ch^{\leq m-1}(K)_v^w|)$ is a free $\Z$-module generated by (simplicial, homological) chains $g_\bc$ for $\bc\in \Ch^{=m}(K)_v^w$, and the differentials are calculated in \ref{p:GcDiff}.
\end{proof}

\section{Non-looping length covering}

In this Section we generalize the results obtained for finite proper $\square$--sets with height function to a larger class of $\square$--sets that have proper non-looping length coverings. Recall that $\pcSqSet_*^*$ is the category of finite bi-pointed $\square$--sets having the proper non-looping covering.

For $K\in \pcSqSet_*^*$ and $n\geq 0$ define a $\square$--subset $\tilde{K}_n\subseteq \tilde{K}$ by $\tilde{K}_n[d]=K[d]\times \{0,\dots,n-d\}$ (cf.\ \ref{e:NonLoopingLengthCovering}). For every $n$, the formula $(K,v,w)\mapsto (\tilde{K}_n,(v,0),(w,n))$ defines a functor from $\pcSqSet_*^*$ to $\mathbf{fph}\square\Set_*^*$. 

Let $p:\tilde{K}\ni (c,k)\mapsto c\in K$ denote the obvious projection, and let $p_n=p|_{\tilde{K}_n}$. 

\begin{prp}\label{p:CoverPropertiesIso}
	Assume that $(K,v,w)\in \pcSqSet_*^*$. All maps in the sequence
	\[
		\coprod_{n\geq 0} \vec{N}(\tilde{K}_n)_{(v,0)}^{(w,n)}\subseteq  \coprod_{n\geq 0} \vec{P}(\tilde{K}_n)_{(v,0)}^{(w,n)}\subseteq \coprod_{n\geq 0} \vec{P}(\tilde{K})_{(v,0)}^{(w,n)}\xrightarrow{\coprod p_*} \vec{P}(K)_v^w
	\]
	are homotopy equivalences, and they are functorial with respect to $(K,v,w)$.
\end{prp}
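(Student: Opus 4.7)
The plan is to handle the three maps in the sequence separately. First, for each $n\geq 0$, the $\square$-set $\tilde K_n$ is finite, proper, and carries the height function $h(c,k)=k$, so the results of the previous sections apply to it. Moreover, $(K,v,w)\mapsto (\tilde K_n,(v,0),(w,n))$ defines a functor from $\pcSqSet_*^*$ to $\mathbf{fph}\square\Set_*^*$ (any $\square$-map $f:K\to K'$ lifts to $\tilde f:(c,k)\mapsto(f(c),k)$, which preserves height), and all maps in the sequence are visibly natural with respect to these lifts, so functoriality will come for free once each map is shown to be a homotopy equivalence.

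For the first inclusion, apply \cite[2.16]{R1} (the result invoked already in the discussion around (\ref{e:Nat})) to each finite proper $\square$-set with height function $\tilde K_n$: the reparametrizing inclusion $\vec N(\tilde K_n)_{(v,0)}^{(w,n)}\hookrightarrow \vec P(\tilde K_n)_{(v,0)}^{(w,n)}$ and the naturalization map $nat$ are mutually inverse homotopy equivalences. Taking the disjoint union over $n$ preserves homotopy equivalence, so the first arrow is a homotopy equivalence.

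For the second inclusion, I claim it is in fact an equality of subspaces. The height function on $\tilde K$ extends to a d-map $h:|\tilde K|\to\vec\R$ with $h((c,k),(s_1,\dots,s_d))=k+\sum s_i$, which is non-decreasing along every directed path. For any $\alpha\in\vec P(\tilde K)_{(v,0)}^{(w,n)}$, the composition $h\circ\alpha$ goes monotonically from $0$ to $n$, so $h\circ\alpha([0,1])\subseteq[0,n]$. If $\alpha$ visits the interior of a cube $(c,k)\in\tilde K[d]$, the values of $h$ there fill $[k,k+d]$, forcing $k\geq 0$ and $k+d\leq n$, i.e.\ $(c,k)\in\tilde K_n[d]$. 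Hence the image of $\alpha$ lies in $|\tilde K_n|$, and the two path spaces coincide.

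For the third map $\coprod p_*$, the projection $p:|\tilde K|\to|K|$ is a topological covering with fiber $\Z$ (it is a simplicial covering of geometric realizations, obtained by tensoring each cube of $K$ with $\Z$). Given $\alpha\in\vec P(K)_v^w$, let $\tilde\alpha$ be the unique continuous covering-space lift of $\alpha$ starting at $(v,0)$; directedness of $\tilde\alpha$ follows locally, since $p$ restricts to a d-homeomorphism on each cube, and the endpoint has the form $(w,n)$ for the unique non-negative integer $n=h(\tilde\alpha(1))$. The assignment $\alpha\mapsto\tilde\alpha$ is a two-sided set-theoretic inverse of $\coprod p_*$, and its continuity in the compact-open topology is the standard covering-space argument (partition $[0,1]$ into finitely many subintervals whose $\alpha$-images lie in evenly covered opens). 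The main obstacle here is verifying carefully that the unique continuous lift of a directed path is again directed and that the lifting map is continuous; both facts reduce to the local d-homeomorphism structure of $p$ combined with the standard compact-open argument, so the real content is bookkeeping rather than novel ideas.
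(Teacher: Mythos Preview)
Your argument is correct and follows the same three-map decomposition as the paper: the paper cites \cite[2.5]{R1} for the leftmost inclusion, gives exactly your height-function argument for the middle one (showing it is a homeomorphism), and cites \cite[Prop.~5.3]{R3} for $\coprod p_*$, whereas you unpack that last citation into an explicit covering-space lifting showing it is a homeomorphism.

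One minor imprecision worth flagging: the phrase ``$p$ restricts to a d-homeomorphism on each cube'' is not literally true when $K$ itself has self-linked cubes (the hypothesis is only that $\tilde K$ is proper, so $K$ may still have identifications $d_f(c)=d_g(c)$ with $|f^{-1}(1)|\neq|g^{-1}(1)|$, in which case the characteristic map of $c$ is not injective). Directedness of the lift is still immediate, though, by using the definition of a directed path in $|K|$ as a concatenation of pieces $t\mapsto (c_i,\beta_i(t))$ and lifting each piece to $t\mapsto ((c_i,k_i),\beta_i(t))$ with the $k_i$ determined inductively from the starting point $(v,0)$.
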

\begin{proof}
	For the right-hand map it follows \cite[Proposition 5.3]{R3} and for the left-hand one from \cite[2.5]{R1}. Every directed path in $\vec{K}$ with endpoints in $\vec{K}_n$ lies in $\vec{K}_n$ so the middle inclusion is a homeomorphism.
\end{proof}

This criterion shows that, when proving Theorem \ref{t:Cover}, we can restrict to the case when $K$ if a finite proper $\square$-set with height function.

\begin{prp}\label{p:ChainCover}
	For an arbitrary $\square$-set $K$ and $v,w\in K[0]$ the sequence of $\square$--maps $\tilde{K}_n\subseteq \tilde{K}\xrightarrow{pr} K$ induces morphism of posets 
	\[
		\coprod_{n\geq 0} \Ch(\tilde{K}_n)_{(v,0)}^{(w,n)}\xrightarrow{\cong} \coprod_{n\geq 0} \Ch(\tilde{K})_{(v,0)}^{(w,n)} \xrightarrow{\simeq} \Ch(K)_v^w,
	\]
	which are isomorphisms.
\end{prp}
\begin{proof}
	The formula $\Ch(K)_v^w\ni (c_i)_{i=1}^l \mapsto ((c_i,\sum_{j=1}^{i-1} \dim(c_j))$ defines the inverse function and all the maps involved commute with $d_{K,A}$.
\end{proof}

\begin{proof}[\textbf{Proof of \ref{t:Cover}}]
	If $K$ is a $\square$--set with the proper non-looping covering, then, for $v,w\in K[0]$, there is a sequence of homotopy equivalences
	\[
		\vec{P}(K)_v^w\buildrel{\simeq}\over\longleftarrow \coprod_{n\geq 0} \vec{P}(\tilde{K}_n)_{(v,0)}^{(w,n)}\xrightarrow{\coprod \varepsilon_{\tilde{K}_n}}
		\coprod_{n\geq 0} |\Ch(\tilde{K}_n)_{(v,0)}^{(w,n)}|\xrightarrow{\simeq} |\Ch(K)_v^w|,
	\]
	which follow from \ref{p:CoverPropertiesIso}, \ref{p:Cover} and \ref{p:ChainCover} respectively.
\end{proof}

\begin{proof}[\textbf{Proof of \ref{t:CW} and \ref{t:Diff}}]
	By \ref{p:ChainCover}, these follow from \ref{p:CWp} and \ref{p:Diff}, respectively.
\end{proof}

\end{document}